\newif\ifBASIC
\BASICfalse
\newif\ifWP
\WPfalse
\newif\ifFULL
\FULLfalse
\newif\ifLATIN
\LATINfalse

\WPtrue


\LATINtrue  

\newif\ifnotFULL	
\notFULLtrue
\ifFULL\notFULLfalse\fi

\newif\ifnotLATIN	
\notLATINtrue
\ifLATIN\notLATINfalse\fi

\ifBASIC
  \newcommand{\CTI}{Vovk:arXiv0712.1275}
  \newcommand{\CTII}{Vovk:arXiv0712.1483}
  \newcommand{\CTIII}{Vovk:arXiv0801.1309}
  \newcommand{\CTV}{Vovk:arXiv1005}
  \newcommand{\LevyZeroOne}{Shafer/etal:2012JTP}
  \newcommand{\Dawid}{Dawid/etal:2011-full}
\fi
\ifWP
  \newcommand{\CTI}{Vovk:2009rand}
  \newcommand{\CTII}{Vovk:2008ECP}
  \newcommand{\CTIII}{GTP26arXiv}
  \newcommand{\CTV}{Vovk:2011-LMJ}

  \newcommand{\LevyZeroOne}{Shafer/etal:2012JTP}
  \newcommand{\Dawid}{Dawid/etal:2011}
\fi

\ifnotLATIN
  \newcommand{\Takeuchi}{Takeuchi:2004}
\fi
\ifLATIN
  \newcommand{\Takeuchi}{Takeuchi:2004latin}
\fi

\ifBASIC
  \documentclass{article}
  \usepackage{amsmath,amsthm,amsfonts,amssymb,latexsym,euscript}
  \newcommand{\Extra}[1]{}
\fi

\ifWP
  \documentclass{article}
  \usepackage{amsmath,amsthm,amsfonts,amssymb,latexsym,euscript,graphicx,url}

%

\makeatletter

\newif\iftwodates
\twodatesfalse

\renewcommand\maketitle{\begin{titlepage}%
  \let\footnotesize\small
  \let\footnoterule\relax
  \let \footnote \thanks
  \null\vfil
  \vskip 30\p@
  \begin{center}%
    {\LARGE \bf \@title \par}%
    \vskip 3em%
    {\large
     \lineskip .75em%
     \begin{tabular}[t]{c}%
       \@author
     \end{tabular}\par}%
     \vskip 1.5em%
  \end{center}\par
  \vfill
  \begin{center}
    \raisebox{1.5cm}{\includegraphics[width=0.58\textwidth]%
      {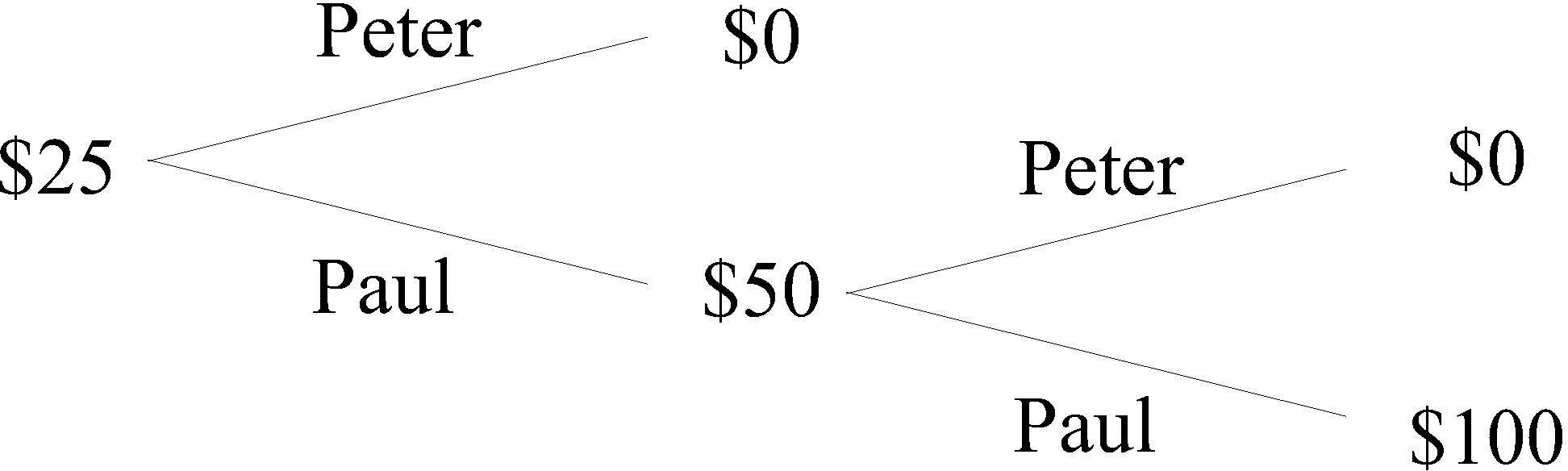}}%
    \hskip 3em%
    \includegraphics[width=0.29\textwidth]%
      {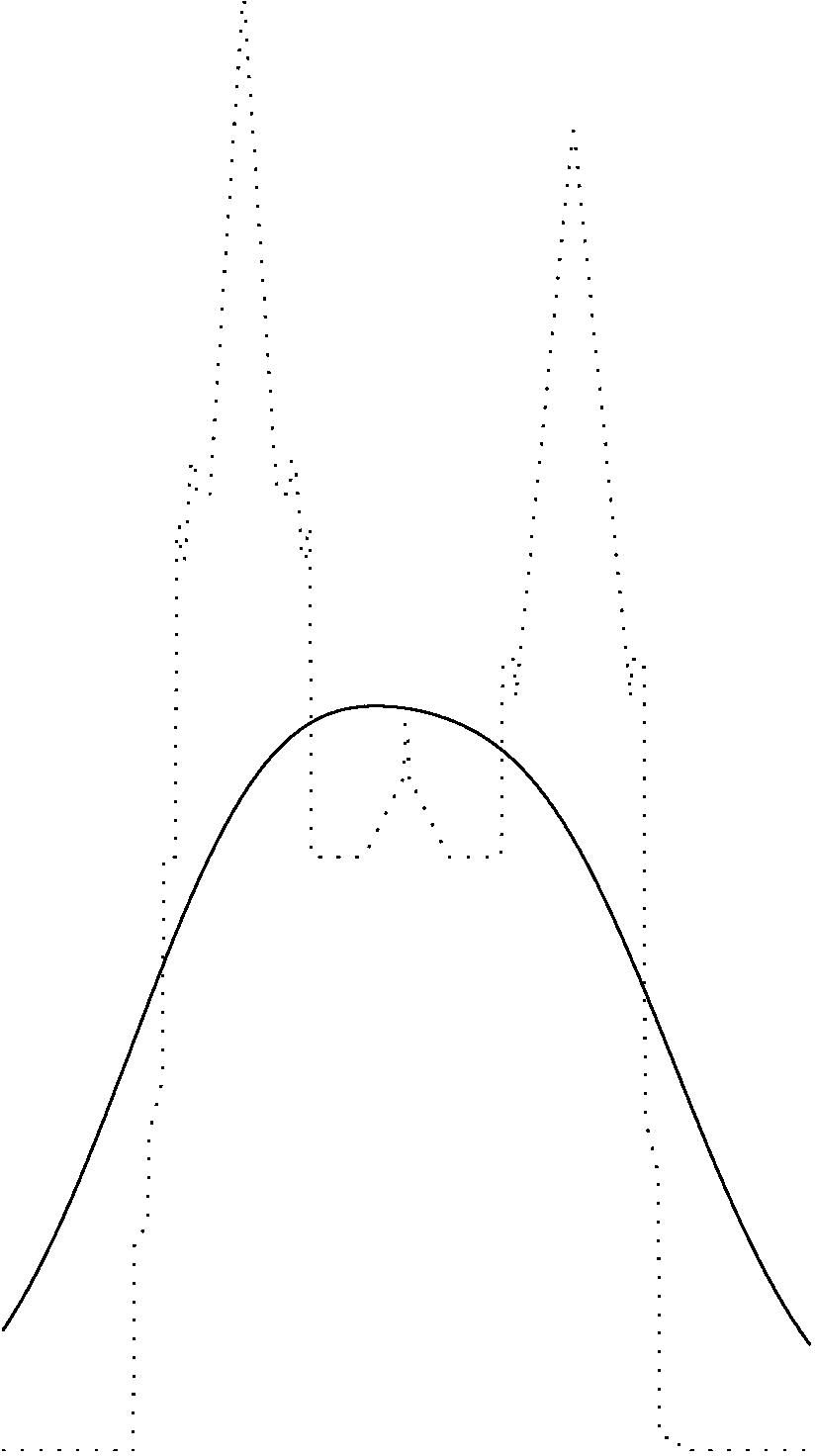}%
  \end{center}
  \@thanks
  \vfill
  \begin{center}
    {\large \bf The Game-Theoretic Probability and Finance Project}
  \end{center}
  \begin{center}
    {\large Working Paper \#\No}
  \end{center}
  \begin{center}
    {\iftwodates\large First posted \firstposted.
    Last revised \@date.\else\large\@date\fi}
  \end{center}
  \begin{center}
    Project web site:\\
    http://www.probabilityandfinance.com
  \end{center}
  \end{titlepage}%
  \setcounter{footnote}{0}%
  \global\let\thanks\relax
  \global\let\maketitle\relax
  \global\let\@thanks\@empty
  \global\let\@author\@empty
  \global\let\@date\@empty
  \global\let\@title\@empty
  \global\let\title\relax
  \global\let\author\relax
  \global\let\date\relax
  \global\let\and\relax
}

\renewenvironment{abstract}{%
  \titlepage
  \null\vfil
  \@beginparpenalty\@lowpenalty
  \begin{center}%
    \Large \bfseries \abstractname
    \@endparpenalty\@M
  \end{center}}%
  {\par\vfill\tableofcontents\endtitlepage}

\renewenvironment{thebibliography}[1]
  {\section*{\refname}%
  \addcontentsline{toc}{section}{\refname}
  \@mkboth{\MakeUppercase\refname}{\MakeUppercase\refname}%
  \list{\@biblabel{\@arabic\c@enumiv}}%
    {\settowidth\labelwidth{\@biblabel{#1}}%
    \leftmargin\labelwidth
    \advance\leftmargin\labelsep
    \@openbib@code
    \usecounter{enumiv}%
    \let\p@enumiv\@empty
    \renewcommand\theenumiv{\@arabic\c@enumiv}}%
    \sloppy
    \clubpenalty4000
    \@clubpenalty \clubpenalty
    \widowpenalty4000%
    \sfcode`\.\@m}
    {\def\@noitemerr
    {\@latex@warning{Empty `thebibliography' environment}}%
  \endlist}

\makeatother

  \newcommand{\Extra}[1]{}
  \setcounter{tocdepth}{2}
\fi

\ifFULL
\usepackage{color}
\renewcommand{\Extra}[1]{\blue{#1}}

\newcommand{\blue}[1]{\textcolor{blue}{#1}}
\newcommand{\bluebegin}{\begingroup\color{blue}}
\newcommand{\blueend}{\endgroup}

\fi

\emergencystretch=5mm
\tolerance=400
\allowdisplaybreaks[4]

\ifnotLATIN
  \usepackage{CJK}
\fi

\newcommand{\st}{\mathrel{|}}		
\newcommand{\givn}{\mathrel{|}}
\renewcommand{\And}{\mathrel{\&}}	

\newcommand{\dd}{d}			

\newcommand{\CDOT}{\,\cdot\,}		

\newcommand{\K}{\EuScript{K}}		
\newcommand{\GGG}{\EuScript{G}}		

\newcommand{\FFF}{\EuScript{F}}		
\newcommand{\KKK}{\EuScript{I}}		
\newcommand{\Normal}{\EuScript{N}}	
\newcommand{\Wiener}{\EuScript{W}}	

\DeclareMathOperator{\III}{\boldsymbol{1}}		

\newcommand{\bbbp}{\mathbb{P}}		
\DeclareMathOperator{\Prob}{\bbbp}
\DeclareMathOperator{\UpProb}{\overline{\bbbp}}		
\DeclareMathOperator{\LowProb}{\underline{\bbbp}}	

\newcommand{\UpQ}{\smash{\overline{Q}}}		
\newcommand{\LowQ}{\smash{\underline{Q}}}	

\newcommand{\UpA}{\smash{\overline{A}}}		
\newcommand{\LowA}{\smash{\underline{A}}}	

\newcommand{\bbbe}{\mathbb{E}}		
\DeclareMathOperator{\Expect}{\bbbe}
\DeclareMathOperator{\UpExpect}{\overline{\bbbe}}   
\DeclareMathOperator{\LowExpect}{\underline{\bbbe}} 

\DeclareMathOperator{\DS}{DS}		
\DeclareMathOperator{\ntt}{ntt}		
\DeclareMathOperator{\m}{m}		

\DeclareMathOperator{\var}{v}		
\DeclareMathOperator{\vi}{vi}		
\DeclareMathOperator{\qvar}{w}		

\newcommand{\bbbr}{\mathbb{R}}		
\newcommand{\bbbq}{\mathbb{Q}}		
\newcommand{\bbbd}{\mathbb{D}}		
\newcommand{\bbbz}{\mathbb{Z}}		

  \theoremstyle{plain}
  \newtheorem{theorem}{Theorem}[section]
  \newtheorem{proposition}[theorem]{Proposition}
  \newtheorem{corollary}[theorem]{Corollary}
  \newtheorem{lemma}[theorem]{Lemma}

  \theoremstyle{definition}
  \newtheorem{remark}[theorem]{Remark}

  \makeatletter
  \@addtoreset{equation}{section}
  
  \makeatother

\newlength{\IndentI}
\newlength{\IndentII}
\newlength{\IndentIII}
\setlength{\IndentI}{7mm}
\setlength{\IndentII}{14mm}
\setlength{\IndentIII}{21mm}
\newlength{\WidthI}
\newlength{\WidthII}
\newlength{\WidthIII}
\setlength{\WidthI}{\textwidth}
\setlength{\WidthII}{\textwidth}
\setlength{\WidthIII}{\textwidth}
\addtolength{\WidthI}{-\IndentI}
\addtolength{\WidthII}{-\IndentII}
\addtolength{\WidthIII}{-\IndentIII}

\ifBASIC
\title{Continuous-time trading and\\the emergence of probability}
\author{Vladimir Vovk\\
\texttt{vovk{\rm@}cs.rhul.ac.uk}\\
\texttt{http://vovk.net}}
\fi

\ifWP
  \title{Continuous-time trading and\\the emergence of probability}
  \author{Vladimir Vovk}
  \newcommand{\No}{28}
  \twodatestrue
  \newcommand{\firstposted}{April 28, 2009}
\fi

\begin{document}
\maketitle

\begin{abstract}
  This paper establishes a non-stochastic analogue
  of the celebrated result by Dubins and Schwarz
  about reduction of continuous martingales to Brownian motion via time change.
  We consider an idealized financial security with continuous price path,
  without making any stochastic assumptions.
  It is shown that typical price paths possess quadratic variation,
  where ``typical'' is understood in the following game-theoretic sense:
  there exists a trading strategy
  that earns infinite capital without risking more than one monetary unit
  if the process of quadratic variation does not exist.
  Replacing time by the quadratic variation process,
  we show that the price path becomes Brownian motion.
  This is essentially the same conclusion as in the Dubins--Schwarz result,
  except that the probabilities (constituting the Wiener measure)
  emerge instead of being postulated.
  We also give an elegant statement,
  inspired by Peter McCullagh's unpublished work,
  of this result in terms of game-theoretic probability theory.

  \bigskip

  \bigskip

  \noindent
  \emph{The journal version \cite{Vovk:2012FS-short} of this paper
    appeared in \emph{Finance and Stochastics} 
    in 2012.
    The final journal publication is available at Springer via}
    \begin{quote}
      \url{http://dx.doi.org/10.1007/s00780-012-0180-5}.
    \end{quote}
    \emph{As compared to the journal version, this technical report slightly strengthens the main result
    (Theorem~\ref{thm:supermain}) and includes a few further clarifications.}
  \newpage
\end{abstract}

\ifBASIC
  \tableofcontents
\fi

\section{Introduction}
\label{sec:introduction}

This paper is a contribution to the game-theoretic approach to probability.
This approach was explored (by, e.g., von Mises, Wald, and Ville)
as a possible basis for probability theory
at the same time as the now standard measure-theoretic approach (Kolmogorov),
but then became dormant.
\ifFULL\bluebegin
  This paragraph concerns only game-theoretic probability
  as the foundation for probability theory
  and, e.g., ignores two periods:
  \begin{itemize}
  \item
    The pre-axiomatization period,
    where game-theoretic vs.\ measure theoretic probability
    can be traced to the Pascal--Fermat correspondence
    (\cite{Shafer/Vovk:2001}, Section 2.1).
  \item
    The work on the algorithmic theory of randomness (e.g., Schnorr)
    was not concerned with game-theoretic foundations of probability
    (they were content with the standard measure-theoretic foundations).
    The first person to declare that the measure-theoretic foundations
    are not sufficient
    was Phil Dawid with his prequential principle.
  \end{itemize}
\blueend\fi
The current revival of interest in it
started with A.~P.~Dawid's prequential principle
(\cite{Dawid:1984}, Section 5.1,
\cite{Dawid/Vovk:1999}, Section 3),
and recent work on game-theoretic probability
includes monographs \cite{Shafer/Vovk:2001,\Takeuchi}
and papers
\cite{Kumon/etal:2007,Horikoshi/Takemura:2008,Kumon/Takemura:2008,Kumon/etal:2008,%
Takeuchi/etal:2010,Kumon/etal:2011}.

Treatment of continuous-time processes in game-theoretic probability
often involves non-standard analysis
(see, e.g., \cite{Shafer/Vovk:2001}, Chapters 11--14).
The recent paper \cite{Takeuchi/etal:2009}
suggested avoiding non-standard analysis
and introduced
\ifFULL\bluebegin (in game-theoretic probability) \blueend\fi
the key technique of ``high-frequency limit order strategies'',
also used in this paper and its predecessors, \cite{\CTI} and \cite{\CTII}.

An advantage of game-theoretic probability
is that one does not have to start with a full-fledged probability measure
from the outset
to arrive at interesting conclusions,
even in the case of continuous time.
For example,
\cite{\CTI} shows that continuous price paths
satisfy many standard properties of Brownian motion
(such as the absence of isolated zeroes)
and \cite{\CTII} (developing \cite{GTP5} and \cite{Takeuchi/etal:2009})
shows that the variation index of a non-constant continuous price path is 2,
as in the case of Brownian motion.
The standard qualification ``with probability one''
is replaced with ``unless a specific trading strategy
increases the capital it risks manyfold''
(the formal definitions, assuming zero interest rate,
will be given in Section \ref{sec:definitions}).
This paper makes the next step,
showing that the Wiener measure emerges in a natural way
in the continuous trading protocol.
Its main result
contains all main results of \cite{\CTI,\CTII},
together with several refinements,
as special cases.

Other results about the emergence of the Wiener measure
in game-theoretic probability
can be found in \cite{Vovk:1993forecasting} and \cite{\CTIII}.
However, the protocols of those papers are much more restrictive,
involving an externally given quadratic variation
(a game-theoretic analogue of predictable quadratic variation,
generally chosen by a player called Forecaster).
In this paper the Wiener measure emerges
in a situation with surprisingly little \emph{a priori} structure,
involving only two players:
the market and a trader.
\ifFULL\bluebegin
  Almost ``out of thin air''.
\blueend\fi

The reader will notice that not only our main result
but also many of our definitions resemble those
in Dubins and Schwarz's paper \cite{Dubins/Schwarz:1965},
which can be regarded as the measure-theoretic counterpart of this paper.
The main difference of this paper is that we do not assume
a given probability measure from the outset.
\ifFULL\bluebegin
  Since they start from a probability measure,
  they do not have to worry about tightness.
\blueend\fi
A less important difference is that our main result will not assume
that the price path is unbounded and nowhere constant
(among other things,
this generalization is important to include the main results of \cite{\CTI,\CTII}
as special cases\ifFULL\bluebegin\
  and to cover the case of financial markets
  in which prices cannot become strictly negative\blueend\fi).
A result similar to that of Dubins and Schwarz
was almost simultaneously proved by Dambis \cite{Dambis:1965};
however,
Dambis, unlike Dubins and Schwarz, dealt with predictable quadratic variation,
and his result can be regarded as the measure-theoretic counterpart
of \cite{Vovk:1993forecasting} and \cite{\CTIII}.

\ifFULL\bluebegin
  I hope that the current work on game-theoretic probability
  will lead to a better balance
  between the two varieties of probability.
  Measure-theoretic probability is ideal in the core of probability theory,
  where everything is probabilized
  (including, e.g., randomized algorithms).
  Game-theoretic probability allows us to explore
  the emergence of measure-theoretic probability
  and probability-type properties
  at the outskirts of probability.
\blueend\fi

Another related result is the well-known observation
(see, e.g., \cite{Follmer/Schied:2011}, Theorem 5.39)
that in the binomial model of a financial market
every contingent claim can be replicated by a self-financing portfolio
whose initial price is the expected value
(suitably discounted if the interest rate is not zero)
of the payoff function with respect to the risk-neutral probability measure.
This insight is, essentially, extended in this paper
to the case of an incomplete market
(the price for completeness in the binomial model
is the artificial assumption that at each step the price can only go up or down
by specified factors)
and continuous time
(continuous-time mathematical finance
usually starts from an underlying probability measure,
with some notable exceptions discussed in Section~\ref{sec:literature}).

This paper's definitions and results have many connections
with several other areas of finance and stochastics,
including stochastic integration,
the
Fundamental Theorems of Asset Pricing,
and model-free option pricing.
These will be discussed in Section~\ref{sec:literature}.

The main part of the paper starts
with the description of our continuous-time trading protocol
and the definition of game-theoretic versions of the notion of probability
(upper and lower price of a set)
in Section~\ref{sec:definitions}.
In Section~\ref{sec:result} we state our main result (Theorem~\ref{thm:main}),
which becomes especially intuitive
if we restrict our attention to the case of the initial price equal to $0$
and price paths that do not converge to a finite value and are nowhere constant:
the upper and lower price of any event
that is invariant with respect to time transformations
then exist and coincide between themselves and with its Wiener measure
(Corollary \ref{cor:main}).
This simple statement was made possible
by Peter McCullagh's unpublished work
on Fisher's fiducial probability:
McCullagh's idea was that fiducial probability
is only defined on the $\sigma$-algebra of events
invariant with respect to a certain group of transformations.
Section \ref{sec:applications} presents several applications
(connected with \cite{\CTI} and \cite{\CTII})
demonstrating the power of Theorem \ref{thm:main}.
The fact that typical price paths possess quadratic variation
is proved in Section \ref{sec:2-variation}.
It is, however, used earlier, in Section \ref{sec:result-constructive},
where it allows us to state a constructive version of Theorem \ref{thm:main}.
The constructive version, Theorem \ref{thm:main-constructive},
says that replacing time by the quadratic variation process
turns the price path into Brownian motion.
In Section~\ref{sec:generalizations} we state generalizations
from events to positive measurable functions
of Theorem~\ref{thm:main} and part of Theorem~\ref{thm:main-constructive};
these are Theorem~\ref{thm:supermain} and Theorem~\ref{thm:supermain-constructive},
respectively.
The easy directions in Theorem \ref{thm:supermain}
and Theorem~\ref{thm:supermain-constructive}
are proved in the same section.
Sections \ref{sec:coherence} and \ref{sec:tight}
prove part of Theorem~\ref{thm:main-constructive}
and prepare the ground for the proof
of the remaining parts of Theorems~\ref{thm:main-constructive}
and~\ref{thm:supermain-constructive}
(in Section \ref{sec:proof-constructive-b})
and Theorem \ref{thm:supermain}
(in Section \ref{sec:proof-main-le}).
Section~\ref{sec:literature} continues
the general discussion started in this section.

The words such as ``positive'', ``negative'', ``before'', ``after'',
``increasing'', and ``decreasing''
will be understood in the wide sense of $\ge$ or $\le$,
as appropriate;
when necessary, we will add the qualifier ``strictly''.
As usual,
$C(E)$ is the space of all continuous functions on a topological space $E$
equipped with the $\sup$ norm\ifFULL\bluebegin
  , unlike, e.g., Karatzas and Shreve (\cite{Karatzas/Shreve:1991}, p.~60),
  who use the notation $C[0,\infty)$ for our $\Omega$\blueend\fi.
We often omit the parentheses around $E$ in expressions such as
$C[0,T]:=C([0,T])$.

\section{Upper price for sets}
\label{sec:definitions}

We consider a game between two players,
Reality (a financial market) and Sceptic (a trader),
over the time interval $[0,\infty)$.
First Sceptic chooses his trading strategy
and then Reality chooses a continuous function $\omega:[0,\infty)\to\bbbr$
(the price path of a security).

Let $\Omega$ be the set of all continuous functions $\omega:[0,\infty)\to\bbbr$.
For each $t\in[0,\infty)$,
$\FFF_t$ is defined to be the smallest $\sigma$-algebra
that makes all functions
$\omega\mapsto\omega(s)$, $s\in[0,t]$, measurable.
A \emph{process} $\mathfrak{S}$ is a family of functions
$\mathfrak{S}_t:\Omega\to[-\infty,\infty]$, $t\in[0,\infty)$,
each $\mathfrak{S}_t$ being $\FFF_t$-measurable;
its \emph{sample paths} are the functions $t\mapsto\mathfrak{S}_t(\omega)$.
An \emph{event} is an element of the $\sigma$-algebra
$\FFF_{\infty}:=\vee_t\FFF_t$,
also denoted by $\FFF$.
(We will often consider arbitrary subsets of $\Omega$ as well.)
Stopping times $\tau:\Omega\to[0,\infty]$
w.r.\ to the filtration $(\FFF_t)$
and the corresponding $\sigma$-algebras $\FFF_{\tau}$
are defined as usual;
$\omega(\tau(\omega))$ and $\mathfrak{S}_{\tau(\omega)}(\omega)$
will be simplified to $\omega(\tau)$ and $\mathfrak{S}_{\tau}(\omega)$,
respectively
(occasionally,
the argument $\omega$ will be omitted
in other cases as well).

The class of allowed strategies for Sceptic is defined in two steps.
A \emph{simple trading strategy} $G$
consists of an increasing sequence of stopping times
$\tau_1\le\tau_2\le\cdots$
and, for each $n=1,2,\ldots$, a bounded $\FFF_{\tau_{n}}$-measurable function $h_n$.
It is required that, for each $\omega\in\Omega$,
$\lim_{n\to\infty}\tau_n(\omega)=\infty$.
To such $G$ and an \emph{initial capital} $c\in\bbbr$
corresponds the \emph{simple capital process}
\begin{equation}\label{eq:simple-capital}
  \K^{G,c}_t(\omega)
  :=
  c
  +
  \sum_{n=1}^{\infty}
  h_n(\omega)
  \bigl(
    \omega(\tau_{n+1}\wedge t)-\omega(\tau_n\wedge t)
  \bigr),
  \quad
  t\in[0,\infty)
\end{equation}
(with the zero terms in the sum ignored,
which makes the sum finite for each $t$);
the value $h_n(\omega)$ will be called Sceptic's \emph{bet}
(or \emph{bet on $\omega$}, or \emph{stake}) at time $\tau_n$,
and $\K^{G,c}_t(\omega)$ will be referred to
as Sceptic's capital at time $t$.

A \emph{positive capital process} is any process $\mathfrak{S}$
that can be represented in the form
\begin{equation}\label{eq:positive-capital}
  \mathfrak{S}_t(\omega)
  :=
  \sum_{n=1}^{\infty}
  \K^{G_n,c_n}_t(\omega),
\end{equation}
where the simple capital processes $\K^{G_n,c_n}_t(\omega)$
are required to be positive, for all $t$ and $\omega$,
and the positive series $\sum_{n=1}^{\infty}c_n$ is required to converge.
The sum (\ref{eq:positive-capital}) is always positive
but allowed to take value $\infty$.
Since $\K^{G_n,c_n}_0(\omega)=c_n$ does not depend on $\omega$,
$\mathfrak{S}_0(\omega)$ also does not depend on $\omega$
and will sometimes be abbreviated to $\mathfrak{S}_0$.

\begin{remark}
  The financial interpretation
  of a positive capital process~(\ref{eq:positive-capital})
  is that it represents the total capital of a trader
  who splits his initial capital into a countable number of accounts
  and on each account runs a simple trading strategy
  making sure that this account never goes into debit.
\end{remark}

The \emph{upper price} of a set $E\subseteq\Omega$
(not necessarily $E\in\FFF$)
is defined as
\begin{equation}\label{eq:upper-probability}
  \UpProb(E)
  :=
  \inf
  \bigl\{
    \mathfrak{S}_0
    \bigm|
    \forall\omega\in\Omega:
    \liminf_{t\to\infty}
    \mathfrak{S}_t(\omega)
    \ge
    \III_E(\omega)
  \bigr\},
\end{equation}
where $\mathfrak{S}$ ranges over the positive capital processes
and $\III_E$ stands for the indicator function of $E$.
In the financial terminology
(and ignoring the fact that the $\inf$ in (\ref{eq:upper-probability})
may not be attained),
$\UpProb(E)$ is the price of the cheapest superhedge for the European contingent claim
paying $\III_E$ at time $\infty$.
It is easy to see that the $\liminf_{t\to\infty}$
in (\ref{eq:upper-probability})
can be replaced by $\sup_t$
(and, therefore, by $\limsup_{t\to\infty}$):
we can always stop (i.e., set all bets to $0$)
when $\mathfrak{S}$ reaches the level $1$
(or a level arbitrarily close to $1$).

We say that a set $E\subseteq\Omega$ is \emph{null} if $\UpProb(E)=0$.
If $E$ is null,
there is a positive capital process $\mathfrak{S}$ such that $\mathfrak{S}_0=1$
and $\lim_{t\to\infty}\mathfrak{S}_t(\omega)=\infty$ for all $\omega\in E$
(it suffices to sum over $\epsilon=1/2,1/4,\ldots$
positive capital processes $\mathfrak{S}^{\epsilon}$
satisfying $\mathfrak{S}_0^{\epsilon}=\epsilon$
and $\liminf_{t\to\infty}\mathfrak{S}_t^{\epsilon}\ge\III_E$).
A property of $\omega\in\Omega$ will be said
to hold \emph{for typical $\omega$}
if the set of $\omega$ where it fails is null.
Correspondingly,
a set $E\subseteq\Omega$ is \emph{full} if $\UpProb(E^c)=0$,
where $E^c:=\Omega\setminus E$ stands for the complement of $E$.

We can also define \emph{lower price}:
\begin{equation*}
  \LowProb(E)
  :=
  1-\UpProb(E^c)
\end{equation*}
(intuitively, this is the price of the most expensive subhedge of $\III_E$).
This notion of lower price will not be useful in this paper
(but its simple modification will be).

\begin{remark}
  Another natural setting is where $\Omega$ is defined
  as the set of all continuous functions $\omega:[0,T]\to\bbbr$
  for a given constant $T$ (the time horizon).
  In this case the definition of upper price simplifies:
  instead of $\liminf_{t\to\infty}\mathfrak{S}_t(\omega)$
  we will have simply $\mathfrak{S}_T(\omega)$
  in (\ref{eq:upper-probability}).
\end{remark}

\begin{remark}
  Many alternative names for upper and lower price have been used in literature
  (and even in literature on game-theoretic probability).
  The book \cite{Shafer/Vovk:2001} talks about upper and lower probability in the case of sets
  and upper and lower expectation in the case of functions
  (the latter case will be considered in Section~\ref{sec:generalizations}).
  The journal version \cite{Vovk:2012FS-short} of this paper
  essentially follows \cite{Hoffmann-Jorgensen:1987}
  and \cite{\LevyZeroOne} in using ``outer content'' for ``upper price''
  and ``inner content'' for ``lower price''.
  For terminology used in finance literature,
  see Section \ref{sec:literature}.
\end{remark}

  \subsection{Relation to the standard notion of a self-financing trading strategy}

  Readers accustomed to the standard definition
  of a self-financed trading strategy specifying explicitly the cash position
  (as in \cite{Shiryaev:1999}, Section~VII.1a)
  might find it helpful
  to have the connection between our notion of a simple trading strategy
  and the standard definition spelled out in detail.
  The main difference of the standard definition
  (apart from not being ``simple'', i.e., not trading at discrete times)
  is that it specifies not only the process of trading but also the initial capital.
  In the standard definition,
  we have $d+1$ assets (a bank account and $d$ securities)
  with prices $X^0_t,\ldots,X^d_t$ at time $t$
  (we are using the notation of \cite{Shiryaev:1999}).
  In this paper, $d=1$,
  it is assumed that $X^0_t=1$ for all $t$ (i.e., the interest rate is zero)
  and the notation for $X^1_t$ is $\omega(t)$;
  since $X^0_t$ does not carry any information,
  it is not mentioned explicitly.

  Suppose we are given an initial capital $c$ and a simple trading strategy $G$,
  as described above.
  The corresponding standard trading strategy is defined as a pair of predictable processes
  $(\pi^0_t,\pi^1_t)$;
  intuitively, $\pi^0_t$ (resp.\ $\pi^1_t$)
  is the number of units of $X^0_t$ (resp.\ $X^1_t$)
  in the trader's portfolio.
  We will now describe how the pair $(G,c)$ determines $(\pi^0_t,\pi^1_t)$;
  first we define $\pi^1_t$
  and then explain how $\pi^0_t$ is determined
  by the condition that the trading strategy is self-financing.
  The process $\pi^1_t$ is piecewise constant and is defined by
  \begin{equation*}
    \pi^1_t
    =
    \begin{cases}
      0 & \text{if $t\le\tau_1$}\\
      h_1 & \text{if $\tau_1<t\le\tau_2$}\\
      h_2 & \text{if $\tau_2<t\le\tau_3$}\\
      \ldots;
    \end{cases}
  \end{equation*}
  in particular, $\pi^1_0=0$.
  Being l\`adc\`ag (left-continuous with limits on the right),
  this process is predictable.
  The \emph{gain process} of the standard trading strategy $(\pi^0_t,\pi^1_t)$ is
  \begin{equation*}
    Y^{\pi}_t
    :=
    \int_0^t\pi^0_s\dd X^0_s
    +
    \int_0^t\pi^1_s\dd X^1_s
    =
    \int_0^t\pi^1_s\dd X^1_s
    =
    \K^{G,0}_t,
  \end{equation*}
  in the notation of (\ref{eq:simple-capital}),
  and its \emph{value process} is
  \begin{equation*}
    X^{\pi}_t
    :=
    \pi^0_t X^0_t
    +
    \pi^1_t X^1_t
    =
    \pi^0_t
    +
    \pi^1_t X^1_t.
  \end{equation*}
  Since the initial capital is $c$, we have to define $\pi^0_0:=c$.
  In order to be self-financing,
  the trading strategy $(\pi^0_t,\pi^1_t)$ must satisfy
  $
    X^{\pi}_t
    =
    X^{\pi}_0 + Y^{\pi}_t
  $,
  i.e.,
  \begin{equation*}
    \pi^0_t
    +
    \pi^1_t X^1_t
    =
    c + \K^{G,0}_t
    =
    \K^{G,c}_t.
  \end{equation*}
  Therefore, defining
  \begin{equation*}
    \pi^0_t
    :=
    \K^{G,c}_t
    -
    \pi^1_t X^1_t
  \end{equation*}
  (which agrees with $\pi^0_0:=c$)
  makes the strategy $(\pi^0_t,\pi^1_t)$ self-financing.

  It remains to check that the process $\pi^0_t$ is l\`adc\`ag:
  for each $t\in(0,\infty)$,
  \begin{multline*}
    \pi^0_{t} - \pi^0_{t-}
    =
    (\K^{G,c}_{t} - \K^{G,c}_{t-})
    -
    \pi^1_t
    (X^1_{t} - X^1_{t-})\\
    =
    h_n(\omega) (\omega(t)-\omega(t-))
    -
    \pi^1_t
    (X^1_{t} - X^1_{t-})
    =
    0,
  \end{multline*}
  where $n$ is defined from the condition $t\in(\tau_n,\tau_{n+1}]$.

\section{Main result: abstract version}
\label{sec:result}

A \emph{time transformation} is defined to be
a continuous increasing (not necessarily strictly increasing)
function $f:[0,\infty)\to[0,\infty)$
satisfying $f(0)=0$.
Equipped with the binary operation of composition,
$(f\circ g)(t):=f(g(t))$, $t\in[0,\infty)$,
the time transformations form a (non-commutative) monoid,
with the identity time transformation $t\mapsto t$ as the unit.
The \emph{action} of a time transformation $f$ on $\omega\in\Omega$
is defined to be the composition $\omega^f:=\omega\circ f\in\Omega$,
$(\omega\circ f)(t):=\omega(f(t))$.
The \emph{trail} of $\omega\in\Omega$ is the set
of all $\psi\in\Omega$ such that $\psi^f=\omega$
for some time transformation $f$.
(These notions are often defined for groups rather than monoids:
see, e.g., \cite{Neumann/etal:1994};
in this case the trail is called the orbit.
In their ``time-free'' considerations
Dubins and Schwarz \cite{Dubins/Schwarz:1965,Schwarz:1968,Schwarz:1972}
make simplifying assumptions
that make the monoid of time transformations a group;
we will make similar assumptions in Corollary \ref{cor:main}.)
A subset $E$ of $\Omega$ is \emph{time-superinvariant}
if together with any $\omega\in\Omega$ it contains
the whole trail of $\omega$;
in other words,
if for each $\omega\in\Omega$ and each time transformation $f$ it is true that
\begin{equation}\label{eq:invariant}
  \omega^f\in E
  \Longrightarrow
  \omega\in E.
\end{equation}
The \emph{time-superinvariant class} $\KKK$
is defined to be the family of those events
(elements of $\FFF$)
that are time-superinvariant.

Let $c\in\bbbr$.
The probability measure $\Wiener_c$ on $\Omega$
is defined by the conditions
that $\omega(0)=c$ with probability one
and, for all $0\le s<t$, $\omega(t)-\omega(s)$
is independent of $\FFF_s$ and has the Gaussian distribution
with mean $0$ and variance $t-s$.
(In other words,
$\Wiener_c$ is the distribution of Brownian motion
started at $c$.)
In this paper,
we rely on the classical arguments
for the existence of $\Wiener_c$
(see, e.g., \cite{Karatzas/Shreve:1991}, Chapter~2).
\ifFULL\bluebegin
  We do not emphasize the fact that our arguments
  can be used for the construction of BM:
  cf.\ \cite{\CTIII}.
\blueend\fi
\begin{theorem}\label{thm:main}
  Let $c\in\bbbr$.
  Each event $E\in\KKK$ such that $\omega(0)=c$ for all $\omega\in E$
  satisfies
  \begin{equation}\label{eq:main}
    \UpProb(E)
    =
    \Wiener_c(E).
  \end{equation}
\end{theorem}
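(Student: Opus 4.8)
The plan is to prove the two inequalities $\UpProb(E)\ge\Wiener_c(E)$ and $\UpProb(E)\le\Wiener_c(E)$ separately; only the second will use that $E$ is time-superinvariant and that $\omega(0)=c$ on $E$.

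For the easy inequality $\UpProb(E)\ge\Wiener_c(E)$ I would argue that every positive capital process is a supermartingale under $\Wiener_c$. Indeed, under $\Wiener_c$ the coordinate process is a Brownian motion started at $c$, so each simple capital process $\K^{G_n,c_n}$ appearing in the representation \eqref{eq:positive-capital} is the stochastic integral of a bounded piecewise-constant predictable integrand against Brownian motion; being nonnegative it is a supermartingale with $\Expect_{\Wiener_c}\K^{G_n,c_n}_t\le c_n$. Summing over $n$ (monotone convergence) gives $\Expect_{\Wiener_c}\mathfrak{S}_t\le\mathfrak{S}_0$, and Fatou's lemma then gives $\Expect_{\Wiener_c}[\liminf_{t\to\infty}\mathfrak{S}_t]\le\mathfrak{S}_0$. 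For any $\mathfrak{S}$ admissible in \eqref{eq:upper-probability} this yields $\Wiener_c(E)\le\Expect_{\Wiener_c}[\liminf_t\mathfrak{S}_t]\le\mathfrak{S}_0$, and taking the infimum over such $\mathfrak{S}$ proves the claim. (This half uses neither time-superinvariance nor $\omega(0)=c$ on $E$.)

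For the reverse inequality $\UpProb(E)\le\Wiener_c(E)$ the idea is to transfer the problem, via a Dubins--Schwarz-type reparametrization, to one in which the price path literally is Brownian motion. By the result of Section~\ref{sec:2-variation}, typical $\omega$ possess a quadratic variation process, which is a time change; the constructive version of the theorem (Theorem~\ref{thm:main-constructive}) says that the reparametrized path $\widehat\omega$, characterized by $\omega=\widehat\omega^{A_\omega}$ where $A_\omega$ is that time change, behaves like Brownian motion started at $\omega(0)=c$. Because $E\in\KKK$ is time-superinvariant and $A_\omega$ is a time change, $\omega\in E$ implies $\widehat\omega\in E$; hence for typical $\omega$ the event $E$ is contained in $\{\omega:\widehat\omega\in E\}$, so by subadditivity and the nullity of the non-typical paths it suffices to bound $\UpProb$ of this larger set by $\Wiener_c(E)+\epsilon$. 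Here one builds the capital process explicitly: combine a ``normalizing'' sub-strategy — a high-frequency limit order strategy placing stopping times at dyadic levels of the running quadratic variation, as in \cite{takeuchi/etal:2009,\CTI,\CTII} — with a discretized implementation of the $\Wiener_c$-martingale $\xi_t:=\Expect_{\Wiener_c}[\III_E\mid\FFF_t]$, via its stochastic-integral representation $\xi=\Wiener_c(E)+\int H\,\dd\widehat\omega$. Since $\widehat\omega$ has quadratic variation equal to $t$ for typical $\omega$, pathwise It\^o-type estimates force the discretization error in this integral to vanish, so the resulting positive capital process starts at $\Wiener_c(E)+\epsilon$ and has $\liminf_{t\to\infty}$ at least $\III_E(\widehat\omega)=\III_E(\omega)$; letting $\epsilon\downarrow0$ finishes the proof.

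The main obstacle is exactly this last construction: the integrand $H$ is only predictable, and it must be approximated by simple trading strategies well enough that the capital process dominates $\III_E$ for \emph{every} typical $\omega$, not merely $\Wiener_c$-almost everywhere — the definition of $\UpProb$ quantifies over all $\omega$. This is what forces the preparatory work of Sections~\ref{sec:expectation} and~\ref{sec:tight}: passing from an exact conditional expectation to a suitable dominating object, controlling all errors uniformly over typical paths, and invoking tightness of the relevant family of measures to justify approximating $\III_E$ (and, via the monotone-class structure of $\KKK$ noted in Remark~\ref{rem:intersection}, general events of $\KKK$) by manageable functionals of finitely many increments. Absorbing the degenerate cases — paths converging to a finite value or eventually constant, where $A_\omega$ is bounded and the reparametrization needs care — is a further technicality that the general statement must handle but that the cleaner Corollary~\ref{cor:main} avoids.
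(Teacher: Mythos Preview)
Your overall architecture matches the paper's: prove $\ge$ via the supermartingale property under $\Wiener_c$, and prove $\le$ by (i) establishing a quadratic-variation time change, (ii) showing the reparametrized path behaves like Brownian motion (Theorem~\ref{thm:main-constructive}), and (iii) using time-superinvariance of $E$ to pull the superhedging strategy back from the reparametrized path to the original one. Your handling of the easy direction is fine and essentially the paper's Lemma~\ref{lem:main-ge}.

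The genuine gap is in your construction of the superhedging strategy for the hard direction. You propose to take the Brownian martingale representation $\III_E=\Wiener_c(E)+\int H\,\dd W$ and discretize $H$. You correctly identify the obstacle: $H$ is only a $\Wiener_c$-a.e.\ predictable process, and you need the discretized integral to dominate $\III_E$ for \emph{every} typical $\omega$, not merely $\Wiener_c$-a.s. For a general $E\in\KKK$ the integrand $H$ can be arbitrarily irregular, and nothing in your sketch explains how to pass from ``$\Wiener_c$-a.s.''\ to ``for all typical $\omega$''; invoking tightness and monotone-class arguments does not by itself produce a pathwise estimate on a Riemann-type approximation of $\int H\,\dd\widehat\omega$.

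The paper avoids this altogether by never using the martingale representation theorem. Instead (Section~\ref{sec:proof-constructive-b}) it runs a chain of reductions: replace $\III_E$ by a dominating lower semicontinuous functional (Vitali--Carath\'eodory), then by a continuous functional, then by one depending on $\psi|_{[0,S]}$, then on finitely many samples $\psi(iS/N)$, and finally by one with a $C^\infty$ compactly supported generator $U$. For such $U$ the superhedging strategy is written down explicitly via the heat-equation solutions $\overline{U}_i$ of~\eqref{eq:overline-U}--\eqref{eq:U}, and the error terms in the second-order Taylor expansion~\eqref{eq:5} are controlled \emph{deterministically} on a compact set using the modulus-of-continuity and quadratic-variation bounds of Lemmas~\ref{lem:super-modulus-P} and~\ref{lem:super-2-variation}. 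This is a Lindeberg-type argument, not an approximation of a predictable integrand.

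One further point worth noting: having built a strategy that works when $\tc(\omega)\in E$, the paper transfers it to all of $E$ (Section~\ref{sec:proof-main-le}) by observing that the strategy constructed in Section~\ref{sec:proof-constructive-b} is \emph{time-invariant}, i.e., $\mathfrak S_{f(t)}(\omega)=\mathfrak S_t(\omega\circ f)$; this plus time-superinvariance of $E$ gives~\eqref{eq:infinite-A} directly when $A_\infty(\omega)=\infty$. Your route via the inclusion $E\subseteq\tc^{-1}(E)$ would achieve the same thing in that case. The degenerate case $A_\infty(\omega)<\infty$ is then handled by a short contradiction argument: extend $\tc(\omega)|_{[0,b)}$ by Brownian motion, use that the extension lies in $E'\subseteq E$ a.s., and apply the maximal inequality to the resulting measure-theoretic supermartingale.
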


The main part of (\ref{eq:main}) is the inequality $\le$,
whose proof will occupy us
in Sections~\ref{sec:coherence}--\ref{sec:proof-main-le}.
The easy part $\ge$ will be established in Section~\ref{sec:generalizations}.

\begin{remark}\label{rem:monotonicity}
  Define a partial order $\le$ on $\Omega$ as follows:
  $\omega'\le\omega$ if and only if there is a time change $f$
  such that $\omega'=\omega\circ f$.
  (The intuition behind this definition is that some information in $\omega$ may be lost,
  even if the time scale is ignored: it is possible that $f(\infty)<\infty$.)
  Then $E$ is time-superinvariant if and only if $E$ is an upper set for this partial order.
\end{remark}

\begin{remark}\label{rem:intersection}
  The time-superinvariant class $\KKK$ is closed
  under countable unions and intersections;
  in particular, it is a monotone class.
  However, it is not closed under complementation,
  and so is not a $\sigma$-algebra
  (unlike McCullagh's invariant $\sigma$-algebras).
  An example of a time-superinvariant event $E$
  such that $E^c$ is not time-superinvariant
  is the set of all increasing (not necessarily strictly increasing)
  $\omega\in\Omega$
  satisfying $\lim_{t\to\infty}\omega(t)=\infty$:
  the implication (\ref{eq:invariant}) is violated
  when $\omega$ is the identity function
  (i.e., $\omega(t)=t$ for all $t$),
  $f=0$,
  and we have $E^c$ in place of $E$.
\end{remark}

\begin{remark}\label{rem:time-superinvariance}
  This remark explains the meaning of the formal notion of time-superinvariance.
  Let $f$ be a time transformation.
  Transforming $\omega$ into $\omega^f$
  is either trivial
  ($\omega$ is replaced by the constant $\omega(0)$, if $f=0$)
  or can be split into three steps:
  (a) remove $[T,\infty)$ from the domain of $\omega$,
  i.e., transform $\omega$ into $\omega':=\omega|_{[0,T)}$,
  for some $T\in(0,\infty]$
  (namely, $T:=\lim_{t\to\infty}f(t)$);
  (b) continuously deform the time interval $[0,T)$ into $[0,T')$
  for some $T'\in(0,\infty]$,
  i.e., transform $\omega'$ into $\omega''\in C[0,T')$
  defined by $\omega''(t):=\omega'(g(t))$
  for some increasing homeomorphism $g:[0,T')\to[0,T)$
  (e.g., the graph of $g$ can be obtained from the graph of $f$
  by removing all horizontal pieces);
  (c) insert countably many (perhaps a finite number of, perhaps zero) horizontal pieces
  into the graph of $\omega''$ making sure to obtain an element of $\Omega$
  (inserting a horizontal piece means replacing $\psi\in\Omega$ with
  \begin{equation*}
    \psi'(t)
    :=
    \begin{cases}
      \psi(t) & \text{if $t<a$}\\
      \psi(a) & \text{if $a\le t<b$}\\
      \psi(t+a-b) & \text{if $t\ge b$},
    \end{cases}
  \end{equation*}
  for some $a$ and $b$, $a<b$, in the domain of $\psi$,
  or
  \begin{equation*}
    \psi'(t)
    :=
    \begin{cases}
      \psi(t) & \text{if $t<c$}\\
      \lim_{s\to c}\psi(s) & \text{if $t\ge c$}
    \end{cases}
  \end{equation*}
  if the domain of $\psi$ is $[0,c)$ for some $c<\infty$
  and $\lim_{s\to c}\psi(s)$ exists in $\bbbr$).
  Therefore,
  the trail of $\omega\in\Omega$ consists of all elements of $\Omega$
  that can be obtained from $\omega$ by an application
  of the following steps:
  (a) remove any number of horizontal pieces from the graph of $\omega$;
  let $[0,T)$ be the domain of the resulting function $\omega'$
  (it is possible that $T<\infty$;
  if $T=0$, output any $\omega''\in\Omega$ satisfying $\omega''(0)=\omega(0)$);
  (b) assuming $T>0$,
  continuously deform the time interval $[0,T)$ into $[0,T')$
  for some $T'\in(0,\infty]$;
  let $\omega''$ be the resulting function with the domain $[0,T')$;
  (c) if $T'=\infty$, output $\omega''$;
  if $T'<\infty$ and $\lim_{t\to T'}\omega(t)$ exists in $\bbbr$,
  extend $\omega''$ to $[0,\infty)$ in any way
  making sure that the extension belongs to $\Omega$
  and output the extension;
  otherwise, nothing is output.
  A set $E$ is time-superinvariant
  if and only if application of these last three steps,
  (a)--(c),
  never leads outside $E$.
\end{remark}

\begin{remark}\label{rem:main}
  By the Dubins--Schwarz result \cite{Dubins/Schwarz:1965}
  and Lemma~\ref{lem:invariance} below,
  we can replace the $\Wiener_c$ in the statement of Theorem~\ref{thm:main}
  by any probability measure $P$ on $(\Omega,\FFF)$
  such that the process $X_t(\omega):=\omega(t)$
  is a martingale w.r.\ to $P$ and the filtration $(\FFF_t)$,
  is unbounded $P$-a.s., is nowhere constant $P$-a.s.,
  and satisfies $X_0=c$ $P$-a.s.
\end{remark}

Because of its generality,
some aspects of Theorem~\ref{thm:main} may appear counterintuitive.
(For example,
the conditions we impose on $E$
imply that $E$ contains all $\omega\in\Omega$ satisfying $\omega(0)=c$
whenever $E$ contains constant $c$.)
In the rest of this section
we will specialize Theorem~\ref{thm:main}
to the more intuitive case of divergent and nowhere constant price paths.

Formally,
we say that $\omega\in\Omega$ is \emph{nowhere constant}
if there is no interval $(t_1,t_2)$, where $0\le t_1<t_2$,
such that $\omega$ is constant on $(t_1,t_2)$,
we say that $\omega$ is \emph{divergent}
if there is no $c\in\bbbr$ such that $\lim_{t\to\infty}\omega(t)=c$,
and we let $\DS\subseteq\Omega$ stand for the set of all $\omega\in\Omega$
that are divergent and nowhere constant.
Intuitively, the condition that the price path $\omega$ should be nowhere constant
means that trading never stops completely,
and the condition that $\omega$ should be divergent
will be satisfied if $\omega$'s volatility does not eventually die away
(cf.\ Remark~\ref{rem:volatility} in Section~\ref{sec:result-constructive} below).
The conditions of being divergent and nowhere constant
in the definition of $\DS$ are similar to,
but weaker than,
Dubins and Schwarz's \cite{Dubins/Schwarz:1965} conditions
of being unbounded and nowhere constant.

All unbounded and strictly increasing time transformations $f:[0,\infty)\to[0,\infty)$
form a group, which will be denoted $\GGG$.
Let us say that an event $E$ is \emph{time-invariant}
if it contains the whole orbit $\{\omega^f\st f\in\GGG\}$
of each of its elements $\omega\in E$.
It is clear that $\DS$ is time-invariant.
Unlike $\KKK$,
the time-invariant events form a $\sigma$-algebra:
$E^c$ is time-invariant whenever $E$ is
(cf.\ Remark~\ref{rem:intersection}).

The following two lemmas will be needed to specialize Theorem~\ref{thm:main}
to subsets of $\DS$.
First of all,
it is not difficult to see that for subsets of $\DS$
there is no difference between time-invariance and time-superinvariance
(which makes the notion of time-superinvariance much more intuitive
for subsets of $\DS$).

\begin{lemma}\label{lem:invariance}
  An event $E\subseteq\DS$ is time-superinvariant
  if and only if it is time-invariant.
\end{lemma}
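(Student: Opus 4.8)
The plan is to prove the two directions separately, the forward direction being essentially trivial and the reverse direction requiring a small construction.

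First I would dispose of the ``only if'' direction. Suppose $E\subseteq\DS$ is time-superinvariant, and let $\omega\in E$ and $f\in\GGG$. Since $f$ is an unbounded strictly increasing time change, the composition $\omega^f=\omega\circ f$ again lies in $\DS$ (strict monotonicity of $f$ preserves nowhere-constancy, and unboundedness of $f$ together with divergence of $\omega$ gives divergence of $\omega^f$). Now observe that $\omega$ is obtained from $\omega^f$ by the time change $f^{-1}$, which is itself a (continuous, increasing, $0$-preserving) time change, so $(\omega^f)^{f^{-1}}=\omega$. Hence $\omega$ lies in the trail of $\omega^f$. But we want the reverse: we want $\omega^f\in E$. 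For this, note that $\omega^f$ lies in the trail of $\omega$, because $\omega=(\omega^f)^{f^{-1}}$ exhibits $\omega^f$ as a $\psi$ with $\psi^{f^{-1}}=\omega$. Since $E$ is time-superinvariant and contains $\omega$, it contains the whole trail of $\omega$, hence $\omega^f\in E$. This shows $E$ is time-invariant.

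For the ``if'' direction, suppose $E\subseteq\DS$ is time-invariant; I must show it is time-superinvariant, i.e.\ that $\omega^f\in E$ implies $\omega\in E$ for an \emph{arbitrary} time change $f$ (not necessarily strictly increasing or unbounded). The subtlety is that a general time change $f$ is not invertible, so I cannot simply write $\omega=(\omega^f)^{f^{-1}}$. Assume $\omega^f\in E\subseteq\DS$. The key point is that, since $\omega^f$ is nowhere constant and divergent, the time change $f$ must itself be ``essentially a bijection'': if $f$ were constant on some interval $(t_1,t_2)$ then $\omega^f$ would be constant there too, contradicting $\omega^f\in\DS$, so $f$ is strictly increasing; and if $f$ were bounded, say $f(t)\uparrow M<\infty$, then $\omega^f(t)\to\omega(M)$, contradicting divergence of $\omega^f$, so $f$ is unbounded. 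Therefore $f\in\GGG$ after all. Then $f^{-1}\in\GGG$ as well, and $\omega=(\omega^f)^{f^{-1}}=(\omega^f)^{f^{-1}}$ is the image of the element $\omega^f\in E$ under $f^{-1}\in\GGG$; time-invariance of $E$ gives $\omega\in E$. (One should also check $\omega\in\DS$, which follows since $\omega=(\omega^f)^{f^{-1}}$ with $f^{-1}\in\GGG$ and $\omega^f\in\DS$, by the same preservation argument as in the first direction.)

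The main obstacle, such as it is, lies entirely in the ``if'' direction: one has to notice that the hypothesis $\omega^f\in\DS$ forces the a priori arbitrary time change $f$ to be strictly increasing and unbounded, hence an element of the group $\GGG$, at which point the monoid/group distinction collapses and the implication becomes symmetric. Once this observation is in place the rest is routine bookkeeping about compositions of time changes.
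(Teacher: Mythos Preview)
Your proof is correct and follows essentially the same approach as the paper's. In both directions the key identity is $(\omega^f)^{f^{-1}}=\omega$ for $f\in\GGG$: in the forward direction it exhibits $\omega^f$ as lying in the trail of $\omega$ (the paper states this slightly more tersely), and in the reverse direction you correctly spell out why $\omega^f\in\DS$ forces $f\in\GGG$, which the paper simply asserts as ``clear''. Your exposition in the ``only if'' direction meanders briefly (the sentence about $\omega$ lying in the trail of $\omega^f$ is a detour you immediately abandon), and the checks that $\omega^f\in\DS$ and $\omega\in\DS$ are true but not actually needed for the argument; otherwise the two proofs coincide.
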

\begin{proof}
  If $E$ (not necessarily $E\subseteq\DS$) is time-superinvariant,
  $\omega\in E$, and $f\in\GGG$,
  we have $\psi:=\omega^f\in E$ as $\psi^{f^{-1}}=\omega$.
  Therefore, time-superinvariance always implies time-invariance.

  It is clear that, for all $\psi\in\Omega$ and time transformations $f$,
  $\psi^f\notin\DS$ unless $f\in\GGG$.
  Let $E\subseteq\DS$ be time-invariant, $\omega\in E$,
  $f$ be a time transformation, and $\psi^f=\omega$.
  Since $\psi^f\in\DS$,
  we have $f\in\GGG$, and so $\psi=\omega^{f^{-1}}\in E$.
  Therefore,
  time-invariance implies time-superinvariance
  for subsets of $\DS$.
\end{proof}

\begin{lemma}\label{lem:complement}
  An event $E\subseteq\DS$ is time-super\-invariant
  if and only if $\DS\setminus E$ is time-superinvariant.
\end{lemma}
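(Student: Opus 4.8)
The plan is to reduce everything to the already-established Lemma~\ref{lem:invariance} together with the two observations made just before it: the time-invariant events form a $\sigma$-algebra, and $\DS$ is itself time-invariant. First I would note that the two directions of the equivalence are symmetric. Indeed, since $E\subseteq\DS$ we have $\DS\setminus(\DS\setminus E)=E$, so it suffices to prove the single implication: if $E\subseteq\DS$ is time-superinvariant, then $\DS\setminus E$ is time-superinvariant.

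Now both $E$ and $\DS\setminus E$ are subsets of $\DS$, so by Lemma~\ref{lem:invariance} time-superinvariance is, for each of them, equivalent to time-invariance. Hence it is enough to show: if $E\subseteq\DS$ is time-invariant then $\DS\setminus E$ is time-invariant. Writing $\DS\setminus E=\DS\cap E^c$ with $E^c:=\Omega\setminus E$, time-invariance of $E^c$ follows because the time-invariant events form a $\sigma$-algebra, and then time-invariance of $\DS\cap E^c$ follows because $\DS$ is time-invariant and intersections of time-invariant events are time-invariant. (One should also note $\DS\in\FFF$, so that $\DS\setminus E$ is genuinely an event: divergence fails precisely on the measurable set $\{\omega:\lim_{t\to\infty}\omega(t)\text{ exists in }\bbbr\}$, and ``constant on some nondegenerate interval'' can be written, using rational endpoints, as a countable union of measurable conditions; this is routine.)

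If one prefers not to quote the $\sigma$-algebra fact, the same conclusion can be obtained by the element-chasing argument already used in the proof of Lemma~\ref{lem:invariance}: given a time change $f$ and $\omega\in\Omega$ with $\omega^f\in\DS\setminus E$, the fact that $\psi^g\notin\DS$ unless $g\in\GGG$ forces $f\in\GGG$; then $\omega=(\omega^f)^{f^{-1}}\in\DS$ since $\DS$ is time-invariant, and $\omega\notin E$, since otherwise applying $f\in\GGG$ to $\omega\in E$ and time-invariance of $E$ (Lemma~\ref{lem:invariance}) would give $\omega^f\in E$, contradicting $\omega^f\in\DS\setminus E$. Either way, there is essentially no obstacle beyond keeping track that the relevant complement is taken inside $\DS$ rather than inside $\Omega$; all the real work is carried by Lemma~\ref{lem:invariance}.
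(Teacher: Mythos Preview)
Your proof is correct and takes essentially the same approach as the paper: the paper's proof simply says ``This follows immediately from Lemma~\ref{lem:invariance},'' and you have spelled out exactly how it follows (via the symmetry $\DS\setminus(\DS\setminus E)=E$ and the fact that time-invariant events are closed under complements and intersections). Your additional remarks on measurability of $\DS$ and the alternative element-chasing argument are fine but not needed.
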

\begin{proof}
  This follows immediately from Lemma~\ref{lem:invariance}.
\end{proof}

For time-invariant events in $\DS$,
(\ref{eq:main}) can be strengthened
to assert the coincidence of the upper and lower price of $E$
with $\Wiener_c(E)$.
However, the notions of upper and lower price
have to be modified slightly.

For any $B\subseteq\Omega$,
a restricted version of upper price can be defined by
\begin{equation*}
  \UpProb(E;B)
  :=
  \inf
  \bigl\{
    \mathfrak{S}_0
    \bigm|
    \forall\omega\in B:
    \liminf_{t\to\infty}
    \mathfrak{S}_t(\omega)
    \ge
    \III_E(\omega)
  \bigr\}
  =
  \UpProb(E\cap B),
\end{equation*}
with $\mathfrak{S}$ again ranging over the positive capital processes.
Intuitively,
this is the definition obtained when $\Omega$ is replaced by $B$:
we are told in advance that $\omega\in B$.
The corresponding restricted version of lower price is
\begin{equation*}
  \LowProb(E;B)
  :=
  1-\UpProb(E^c;B)
  =
  \LowProb(E\cup B^c).
\end{equation*}
We will use these definitions
only in the case where $\UpProb(B)=1$.
Lemma \ref{lem:lower-upper} below shows
that in this case $\LowProb(E;B)\le\UpProb(E;B)$.

We will say that $\UpProb(E;B)$ and $\LowProb(E;B)$
are \emph{restricted to $B$}.
It should be clear by now
that these notions are not related to conditional probability $\Prob(E\givn B)$.
Their analogues in measure-theoretic probability
are the function $E\mapsto\Prob(E\cap B)$,
in the case of upper price,
and the function $E\mapsto\Prob(E\cup B^c)$,
in the case of lower price
(assuming $B$ is measurable).
Both functions coincide with $\Prob$ when $\Prob(B)=1$.

\ifFULL\bluebegin
  The ``restricted'' definition
  is also important in measure-theoretic probability:
  cf.\ \cite{Rogers/Williams:2000}, Section II.6, Lemma II.35(1).
\blueend\fi

We will also use the restricted versions of the notions
``null'', ``for typical'', and ``full''.
For example, $E$ being \emph{$B$-null} means $\UpProb(E;B)=0$.

Theorem \ref{thm:main} immediately implies the following statement
about the emergence of the Wiener measure in our trading protocol
(another such statement,
more general and constructive but also more complicated,
will be given in Theorem~\ref{thm:main-constructive}(b)).
\begin{corollary}\label{cor:main}
  Let $c\in\bbbr$.
  Each event $E\in\KKK$ satisfies
  \begin{equation}\label{eq:cor-main}
    \UpProb(E;\omega(0)=c,\DS)
    =
    \LowProb(E;\omega(0)=c,\DS)
    =
    \Wiener_c(E)
  \end{equation}
  (in this context,
  $\omega(0)=c$ stands for the event $\{\omega\in\Omega\st\omega(0)=c\}$
  and the comma stands for the intersection).
\end{corollary}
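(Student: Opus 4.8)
The plan is to derive Corollary~\ref{cor:main} directly from Theorem~\ref{thm:main}, applied twice. Write $B:=\{\omega\in\Omega\st\omega(0)=c\}\cap\DS$, so that the three quantities in~(\ref{eq:cor-main}) are $\UpProb(E\cap B)$, $1-\UpProb(E^c\cap B)$, and $\Wiener_c(E)$. The idea is that $B$ is a $\Wiener_c$-full event lying in $\KKK$, that intersecting a $\KKK$-event (or the complement of one) with $B$ keeps us inside $\KKK$ with $\omega(0)=c$ holding identically, so that Theorem~\ref{thm:main} evaluates both outer contents exactly, and finally that the full $\Wiener_c$-measure of $B$ lets us replace $E\cap B$ and $E^c\cap B$ by $E$ and $E^c$ under $\Wiener_c$.

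Concretely I would first check $B\in\KKK$: the event $\{\omega(0)=c\}$ is time-superinvariant since every time change $f$ has $f(0)=0$, whence $\omega^f(0)=\omega(0)$; and $\DS$, which is plainly time-invariant, belongs to $\KKK$ by Lemma~\ref{lem:invariance} (with $E=\DS$); since $\KKK$ is closed under intersections (Remark~\ref{rem:intersection}), $B\in\KKK$. For $E\in\KKK$ one then has $E\cap B\in\KKK$ immediately, and — the one point that uses the structure of $\DS$ — $E^c\cap B\in\KKK$ as well: indeed $E\cap\DS\in\KKK$ is a subset of $\DS$, so Lemma~\ref{lem:complement} gives $\DS\setminus(E\cap\DS)=E^c\cap\DS\in\KKK$, and intersecting with $\{\omega(0)=c\}\in\KKK$ yields $E^c\cap B\in\KKK$. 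Both $E\cap B$ and $E^c\cap B$ satisfy $\omega(0)=c$ throughout, so Theorem~\ref{thm:main} applies and gives $\UpProb(E\cap B)=\Wiener_c(E\cap B)$ and $\UpProb(E^c\cap B)=\Wiener_c(E^c\cap B)$; applied to $B$ itself it also gives $\UpProb(B)=\Wiener_c(B)$. The remaining input is $\Wiener_c(B)=1$: under $\Wiener_c$ the coordinate process is Brownian motion started at $c$, which has $\omega(0)=c$, is nowhere constant (for rational $q_1<q_2$ the increment $\omega(q_2)-\omega(q_1)$ is a non-degenerate Gaussian, so $\Wiener_c(\omega(q_1)=\omega(q_2))=0$, and every interval of constancy contains a rational subinterval), and is divergent (by recurrence of one-dimensional Brownian motion, $\limsup_{t\to\infty}\omega(t)=+\infty$ and $\liminf_{t\to\infty}\omega(t)=-\infty$ with $\Wiener_c$-probability one), all with $\Wiener_c$-probability one. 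Hence $\Wiener_c(E\cap B)=\Wiener_c(E)$ and $\Wiener_c(E^c\cap B)=\Wiener_c(E^c)=1-\Wiener_c(E)$, and assembling the pieces yields $\UpProb(E;\omega(0)=c,\DS)=\UpProb(E\cap B)=\Wiener_c(E)$ and $\LowProb(E;\omega(0)=c,\DS)=1-\UpProb(E^c\cap B)=\Wiener_c(E)$, which is~(\ref{eq:cor-main}); as a byproduct $\UpProb(B)=\Wiener_c(B)=1$, confirming that $B$ is of the type for which the restricted content was introduced.

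There is no genuinely hard step here: the substance of the corollary already resides in Theorem~\ref{thm:main}, and the rest is bookkeeping with the class $\KKK$ together with two textbook facts about Brownian motion. The only part that is not completely automatic — and the reason the statement is phrased with $\DS$ rather than all of $\Omega$ — is the passage from $E\cap B\in\KKK$ to $E^c\cap B\in\KKK$: because $\KKK$ is not closed under complementation this genuinely needs Lemma~\ref{lem:complement} (equivalently Lemma~\ref{lem:invariance}), i.e.\ the fact that on subsets of $\DS$ time-invariance and time-superinvariance coincide, so that such subsets behave like a $\sigma$-algebra for the present purpose.
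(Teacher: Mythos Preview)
Your proposal is correct and follows essentially the same route as the paper: both arguments verify that $E\cap B$ and $E^c\cap B$ (with $B=\{\omega(0)=c\}\cap\DS$) lie in $\KKK$ via Remark~\ref{rem:intersection} and Lemma~\ref{lem:complement}, apply Theorem~\ref{thm:main} to each, and use $\Wiener_c(B)=1$ to finish. The only cosmetic difference is that the paper packages the conclusion as a sandwich chain invoking $\LowProb\le\UpProb$ (Lemma~\ref{lem:lower-upper} and~(\ref{eq:DS-one})), whereas you compute $\UpProb(E\cap B)$ and $1-\UpProb(E^c\cap B)$ separately and see that each equals $\Wiener_c(E)$ directly; your version is slightly more explicit and avoids the appeal to Lemma~\ref{lem:lower-upper}.
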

\begin{proof}
  Events $E\cap\DS\cap\{\omega\st\omega(0)=c\}$
  and $E^c\cap\DS\cap\{\omega\st\omega(0)=c\}$
  belong to $\KKK$:
  for the first of them,
  this immediately follows from $\DS\in\KKK$
  and $\KKK$ being closed under intersections
  (cf.\ Remark~\ref{rem:intersection}),
  and for the second,
  it suffices to notice that $E^c\cap\DS=\DS\setminus(E\cap\DS)\in\KKK$
  (cf.\ Lemma~\ref{lem:complement}).
  Applying (\ref{eq:main}) to these two events
  and making use of the inequality $\LowProb\le\UpProb$
  (cf.\ Lemma \ref{lem:lower-upper} and Equation (\ref{eq:DS-one}) below),
  we obtain:
  \begin{multline*}
    \Wiener_c(E)
    =
    1 - \Wiener_c(E^c\cap\DS\cap\{\omega\st\omega(0)=c\})
    =
    1 - \UpProb(E^c;\omega(0)=c,\DS)\\
    =
    \LowProb(E;\omega(0)=c,\DS)
    \le
    \UpProb(E;\omega(0)=c,\DS)\\
    =
    \Wiener_c(E\cap\DS\cap\{\omega\st\omega(0)=c\})
    =
    \Wiener_c(E).
    \qedhere
  \end{multline*}
\end{proof}

We can express the equality (\ref{eq:cor-main})
by saying that the game-theoretic probability of $E$ exists and is equal to $\Wiener_c(E)$
when we restrict our attention to $\omega$ in $\DS$ satisfying $\omega(0)=c$.

\section{Applications}
\label{sec:applications}

The main goal of this section is to demonstrate the power of Theorem~\ref{thm:main};
in particular,
we will see that it implies the main results of \cite{\CTI} and \cite{\CTII}.
One corollary (Corollary~\ref{cor:borderline}) of Theorem~\ref{thm:main}
solves an open problem posed in \cite{\CTII},
and two other corollaries (Corollaries~\ref{cor:Taylor-all} and~\ref{cor:Taylor})
give much more precise results.
At the end of the section we will draw the reader's attention
to several events such that:
Theorem~\ref{thm:main} together with very simple game-theoretic arguments
show that they are full;
the fact that they are full
does not follow from Theorem~\ref{thm:main} alone.

In this section
we deduce the main results of \cite{\CTI} and \cite{\CTII} and other results
as corollaries of Theorem~\ref{thm:main}
and the corresponding results for measure-theoretic Brownian motion.
It is, however, still important to have direct game-theoretic proofs
such as those given in \cite{\CTI,\CTII}.
This will be discussed in Remark~\ref{rem:direct}.

The following obvious fact will be used constantly in this paper:
restricted upper price
is countably (in particular, finitely) subadditive.
(Of course, this fact is obvious only because of our choice of definitions.)
\begin{lemma}\label{lem:subadditivity}
  For any $B\subseteq\Omega$
  and any sequence of subsets $E_1,E_2,\ldots$ of $\Omega$,
  \begin{equation*}
    \UpProb
    \left(
      \bigcup_{n=1}^{\infty}
      E_n;
      B
    \right)
    \le
    \sum_{n=1}^{\infty}
    \UpProb(E_n;B).
  \end{equation*}
  In particular,
  a countable union of $B$-null sets is $B$-null.
\end{lemma}

\subsection{Points of increase}

Let us say that $t\in[0,\infty)$ is a \emph{point of increase}
for $\omega\in\Omega$
if there exists $\delta>0$
such that $\omega(t_1)\le\omega(t)\le\omega(t_2)$
for all $t_1\in((t-\delta)^+,t]$ and $t_2\in[t,t+\delta)$.
Points of decrease are defined in the same way
except that $\omega(t_1)\le\omega(t)\le\omega(t_2)$
is replaced by $\omega(t_1)\ge\omega(t)\ge\omega(t_2)$.
We say that $\omega$ is \emph{locally constant to the right of $t\in[0,\infty)$}
if there exists $\delta>0$ such that $\omega$ is constant
over the interval $[t,t+\delta]$.

A slightly weaker form of the following corollary was proved directly
(by adapting Burdzy's \cite{Burdzy:1990} proof)
in \cite{\CTI}.
\begin{corollary}\label{cor:increase}
  Typical $\omega$ have no points $t$ of increase or decrease
  such that $\omega$ is not locally constant to the right of $t$.
\end{corollary}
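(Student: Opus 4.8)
The plan is to obtain the corollary from Corollary~\ref{cor:main} together with the classical Dvoretzky--Erd\H{o}s--Kakutani theorem that Brownian motion almost surely has no point of increase (cf.\ \cite{burdzy:1990}). By the symmetry $\omega\mapsto-\omega$, which preserves $\UpProb$ (replay every simple strategy with its bets negated and its stopping times read off $-\omega$), which interchanges points of increase and points of decrease, and which does not affect ``locally constant to the right'', it suffices to prove that the set $E$ of all $\omega$ possessing a point $t$ of increase at which $\omega$ is not locally constant to the right is null.

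The easy part is the subset $E\cap\DS$. Since the paths in $\DS$ are nowhere constant, the qualifier ``not locally constant to the right'' is vacuous there, so $E\cap\DS=\{\omega\in\DS:\omega\text{ has a point of increase}\}$. The strictly increasing unbounded time changes forming $\GGG$ are homeomorphisms of $[0,\infty)$ and therefore preserve both $\DS$ and the existence of a point of increase, so $E\cap\DS$ is invariant under $\GGG$; being a subset of $\DS$, it is then time-superinvariant by Lemma~\ref{lem:invariance} and hence belongs to $\KKK$. Corollary~\ref{cor:main} gives $\UpProb(E\cap\DS;\omega(0)=c,\DS)=\Wiener_c(E\cap\DS)$, and the right-hand side is $0$ because $\Wiener_c$-almost every path has no point of increase. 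Finally $E\cap\DS$ is invariant under the spatial translations $\omega\mapsto\omega+a$, which preserve $\UpProb$ (replay every simple strategy on $\omega-a$ in place of $\omega$); so, given $\epsilon>0$, a positive capital process $\mathfrak S$ with $\mathfrak S_0=\epsilon$ and $\liminf_t\mathfrak S_t\ge1$ on $E\cap\DS\cap\{\omega(0)=0\}$ gives rise to the positive capital process $t\mapsto\mathfrak S_t(\omega-\omega(0))$, which has initial capital $\epsilon$ and whose $\liminf_t$ is at least $1$ on all of $E\cap\DS$. Letting $\epsilon\downarrow0$, the set $E\cap\DS$ is null.

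The harder part is $E\setminus\DS$, and here the crux is that $E$ is \emph{not} time-superinvariant: putting a constant stretch immediately to the left of a place where $\omega$ rises creates a point of increase out of nothing, so Theorem~\ref{thm:main} does not bear on $E$ directly. I would split $E\setminus\DS$ according to whether $\omega$ is locally constant to the left of its bad point $t$. If it is not, collapsing the constant stretches of $\omega$ by an appropriate time change $f$ produces a nowhere-constant $\psi$ with $\psi^f=\omega$ which --- using that the point-of-increase inequalities at $t$ hold for \emph{all} sufficiently nearby times, hence also across the collapsed stretches --- still has a point of increase, so that after restoring divergence by a further time change (negligible since typical paths are divergent) this subcase is reduced to the case $E\cap\DS$ already disposed of. If instead $\omega$ \emph{is} locally constant on a left-neighbourhood of $t$, the point of increase is produced entirely by that flatness, and no time change removes it within $\KKK$; this is, I expect, the main obstacle, to be overcome by a direct trading (high-frequency limit order) argument --- or, once the quadratic-variation time change of the later sections is available, by the observation that such a configuration would force the emergent Brownian motion to rise immediately to the right of its value at the right endpoint of a flat stretch of its quadratic variation, a one-sided point of increase at a time determined without foreknowledge of the motion and hence of Wiener probability zero.
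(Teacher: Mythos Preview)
Your handling of $E\cap\DS$ via Corollary~\ref{cor:main} and spatial translation is correct, and you rightly identify that the ``locally constant to the left'' case cannot be reached through $\KKK$ and needs a direct trading argument --- the paper simply invokes the easy strategy from \cite{\CTI}, Theorem~1, there. The gap is in your Case~A of $E\setminus\DS$ (not locally constant to the left). Showing that each such $\omega$ equals $\psi^{f}$ for some $\psi\in E\cap\DS$ only places $\omega$ in the time-superinvariant closure $\overline{E\cap\DS}$; it does not ``reduce'' the subcase to $E\cap\DS$, because nullity does not pass to closures --- Proposition~\ref{prop:counterexample} is exactly a counterexample to that inference. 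Your parenthetical ``negligible since typical paths are divergent'' is also false: every full set must contain the constant paths (a positive capital process is identically its initial value along a constant $\omega$), so divergence is not a typical property and cannot be used to discard convergent~$\psi$.

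The paper's decomposition avoids $\DS$ entirely. Let $E_1$ be the set of $\omega$ with $\omega(0)=0$ having a point $t$ of increase or decrease at which $\omega$ is locally constant neither to the right nor to the left. One checks directly that $E_1\in\KKK$: if $\omega^f\in E_1$ with bad point $t$, the two-sided non-constancy of $\omega^f$ at $t$ forces $f$ to be non-constant on each side of $t$, and then $f(t)$ is a point of the same kind for $\omega$. Theorem~\ref{thm:main} gives $\UpProb(E_1)=\Wiener_0(E_1)=0$ by Dvoretzky--Erd\H{o}s--Kakutani, and your own translation argument lifts the restriction on $\omega(0)$. This single step covers both $E\cap\DS$ and your Case~A; only your Case~B remains, where the paper and you agree.
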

\noindent
This result
(without the clause about local constancy)
was established by Dvoretzky, Erd\H{o}s, and Kakutani
\cite{Dvoretzky/etal:1961} for Brownian motion,
and Dubins and Schwarz \cite{Dubins/Schwarz:1965}
noticed that their reduction of continuous martingales
to Brownian motion
shows that it continues to hold
for all almost surely unbounded continuous martingales
that are almost surely nowhere constant.
We will apply Dubins and Schwarz's observation in the game-theoretic framework.
\begin{proof}[Proof of Corollary \ref{cor:increase}]
  Let us first consider only the $\omega\in\Omega$ satisfying $\omega(0)=0$.
  Consider the set $E$ of all $\omega\in\Omega$
  that have points $t$ of increase or decrease
  such that $\omega$ is not locally constant to the right of $t$
  and $\omega$ is not locally constant to the left of $t$
  (with the obvious definition of local constancy to the left of $t$;
  if $t=0$,
  every $\omega$ is locally constant to the left of $t$).
  Since $E$ is time-superinvariant
  (cf.\ Remark~\ref{rem:time-superinvariance}),
  Theorem \ref{thm:main} and the Dvoretzky--Erd\H{o}s--Kakutani result
  show that the event $E$ is null.
  And the following standard game-theoretic argument
  (as in \cite{\CTI}, Theorem~1)
  shows that the event
  that $\omega$ is locally constant to the left
  but not locally constant to the right
  of a point of increase or decrease
  is null.
  For concreteness,
  we will consider the case of a point of increase.
  It suffices to show that for all rational numbers $b>a>0$ and $D>0$
  the event that
  \begin{equation}\label{eq:a-b-D-event}
    \inf_{t\in[a,b]}
    \omega(t)
    =
    \omega(a)
    \le
    \omega(a) + D
    \le
    \sup_{t\in[a,b]}
    \omega(t)
  \end{equation}
  is null
  (see Lemma~\ref{lem:subadditivity}).
  The simple capital process that starts from $\epsilon>0$,
  bets $h_1:=1/D$ at time $\tau_1=a$,
  and bets $h_2:=0$ at time
  $\tau_2:=\min\{t\ge a\st\omega(t)\in\{\omega(a)-D\epsilon,\omega(a)+D\}\}$
  is positive and turns $\epsilon$ (an arbitrarily small amount) into 1
  when (\ref{eq:a-b-D-event}) happens.
  (Notice that this argument works both when $t=0$ and when $t>0$.)

  It remains to get rid of the restriction $\omega(0)=0$.
  Fix a positive capital process $\mathfrak{S}$
  satisfying $\mathfrak{S}_0<\epsilon$ and reaching $1$
  on $\omega$ with $\omega(0)=0$
  that have at least one point $t$ of increase or decrease
  such that $\omega$ is not locally constant to the right of $t$.
  Applying $\mathfrak{S}$ to $\omega-\omega(0)$
  gives another positive capital process,
  which will achieve the same goal but without the restriction $\omega(0)=0$.
\end{proof}

It is easy to see that the qualification
about local constancy to the right of $t$
in Corollary~\ref{cor:increase} is essential.
\begin{proposition}
  The upper price of the following event is one:
  there is a point $t$ of increase
  such that $\omega$ is locally constant to the right of $t$.
\end{proposition}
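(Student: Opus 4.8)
The plan is to exploit the fact that the event $E$ in question --- the set of $\omega\in\Omega$ having a point $t$ of increase to the right of which $\omega$ is locally constant --- contains \emph{every} constant price path, together with the observation that no trading strategy can ever make money on a constant path.

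The bound $\UpProb(E)\le1$ is immediate (it holds for every subset of $\Omega$; e.g.\ the positive capital process with initial capital $1$ and all bets equal to $0$ witnesses it). For the reverse bound, first I would note that each constant function $\omega\equiv c$ belongs to $E$: every $t\in(0,\infty)$ is a point of increase for such $\omega$, since $\omega(t_1)\le\omega(t)\le\omega(t_2)$ then holds (with equality), and $\omega$ is trivially locally constant to the right of $t$. Fix one such path $\omega_0$. By~(\ref{eq:simple-capital}), every simple capital process satisfies $\K^{G,c}_t(\omega_0)=c$ for all $t$, because each increment $\omega_0(\tau_{n+1}\wedge t)-\omega_0(\tau_n\wedge t)$ is $0$; hence every positive capital process $\mathfrak{S}=\sum_n\K^{G_n,c_n}$ is constant in $t$ along $\omega_0$, with $\mathfrak{S}_t(\omega_0)=\sum_n c_n=\mathfrak{S}_0$. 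Consequently, if $\mathfrak{S}$ is any positive capital process with $\liminf_{t\to\infty}\mathfrak{S}_t(\omega)\ge\III_E(\omega)$ for all $\omega\in\Omega$, then $\mathfrak{S}_0=\liminf_{t\to\infty}\mathfrak{S}_t(\omega_0)\ge\III_E(\omega_0)=1$. Taking the infimum over all such $\mathfrak{S}$ gives $\UpProb(E)\ge1$, and therefore $\UpProb(E)=1$.

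There is essentially no obstacle here; the one point worth isolating is that a constant price path cannot be trapped by any positive capital process, so that $\UpProb$ of any set containing such a path is already $1$ (equivalently, by monotonicity of $\UpProb$ in its argument, $\UpProb(E)\ge\UpProb(\{\omega_0\})=1$). The rest is bookkeeping with the definitions of Section~\ref{sec:definitions}.
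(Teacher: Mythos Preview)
Your argument is correct. It is also more elementary than the paper's own proof. The paper invokes Lemma~\ref{lem:super-coherence} with the continuous martingale given by Brownian motion started at $0$ and stopped upon hitting level $1$: almost every sample path of that process has its hitting time of $1$ as a point of increase to the right of which the path is constant, so the event $E$ almost surely contains the sample path, and Lemma~\ref{lem:super-coherence} then yields $\UpProb(E)=1$.

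You bypass Lemma~\ref{lem:super-coherence} entirely by observing that every constant path already lies in $E$ (since the definition of ``point of increase'' uses non-strict inequalities), and then reproducing directly the argument behind Lemma~\ref{lem:coherence}: on a constant $\omega$ every simple capital process, and hence every positive capital process, is constant in time, forcing $\mathfrak{S}_0\ge1$ for any competitor in the infimum defining $\UpProb(E)$. This is shorter and self-contained. The paper's route, on the other hand, illustrates the general machinery of Lemma~\ref{lem:super-coherence} and would still work if the event were modified to exclude constant paths (e.g., by requiring that $\omega$ not be locally constant to the \emph{left} of $t$); your constant-path trick would not survive such a modification.
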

\begin{proof}
  This proof uses Lemma~\ref{lem:super-coherence}
  stated in Section~\ref{sec:coherence} below.
  Consider the continuous martingale which is Brownian motion
  that starts at $0$ and is stopped as soon as it reaches $1$.
\end{proof}

\subsection{Variation index}
\label{subsec:vi}

For each interval $[u,v]\subseteq[0,\infty)$ and each $p\in(0,\infty)$,
the strong \emph{$p$-variation} of $\omega\in\Omega$ over $[u,v]$
is defined as
\begin{equation}\label{eq:strong-p-variation}
  \var_p^{[u,v]}(\omega)
  :=
  \sup_\kappa
  \sum_{i=1}^{n_{\kappa}}
  \left|
    \omega(t_i)-\omega(t_{i-1})
  \right|^p,
\end{equation}
where $\kappa$ ranges over all partitions $u=t_0\le t_1\le\cdots\le t_{n_{\kappa}}=v$
of the interval $[u,v]$.
It is obvious that there exists a unique number
$\vi^{[u,v]}(\omega)\in[0,\infty]$,
called the \emph{variation index} of $\omega$ over $[u,v]$,
such that $\var_p^{[u,v]}(\omega)$ is finite when $p>\vi^{[u,v]}(\omega)$
and infinite when $p<\vi^{[u,v]}(\omega)$;
notice that $\vi^{[u,v]}(\omega)\notin(0,1)$.
\ifFULL\bluebegin
  Proof of the existence and uniqueness:
  see \cite{\CTII} (hidden part).
\blueend\fi

The following result was obtained in \cite{\CTII}
(by adapting Bruneau's \cite{Bruneau:1979} proof);
in measure-theoretic probability
it was established by Lepingle
(\cite{Lepingle:1976}, Theorem~1 and Proposition~3)
for continuous semimartingales
and L\'evy \cite{Levy:1940} for Brownian motion.
\begin{corollary}\label{cor:vi}
  For typical $\omega\in\Omega$,
  the following is true.
  For any interval $[u,v]\subseteq[0,\infty)$ such that $u<v$,
  either $\vi^{[u,v]}(\omega)=2$ or $\omega$ is constant over $[u,v]$.
\end{corollary}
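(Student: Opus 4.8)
The plan is to derive Corollary~\ref{cor:vi} from Theorem~\ref{thm:main} together with the classical measure-theoretic fact (L\'evy~\cite{levy:1940}; Lepingle~\cite{lepingle:1976} for continuous semimartingales) that, for each fixed interval $[u,v]$ with $u<v$, Brownian motion almost surely satisfies $\vi^{[u,v]}=2$ (equivalently, $\var_p^{[u,v]}<\infty$ for $p>2$ and $\var_p^{[u,v]}=\infty$ for $p<2$). Let $A\subseteq\Omega$ be the ``bad'' event whose nullity is exactly the assertion of the corollary: the set of $\omega$ admitting some $[u,v]$ with $u<v$ on which $\omega$ is non-constant and $\vi^{[u,v]}(\omega)\ne2$.

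First I would verify that $A$ is time-superinvariant and belongs to $\FFF$. For time-superinvariance: if $\psi=\omega^f$ for a time change $f$ and $[u,v]$ is a bad interval for $\psi$, then $f$ is not constant on $[u,v]$ (otherwise $\psi$ would be), so $f(u)<f(v)$; since $f$ maps $[u,v]$ continuously onto $[f(u),f(v)]$, a direct check gives $\var_p^{[f(u),f(v)]}(\omega)=\var_p^{[u,v]}(\psi)$ for every $p$, so $[f(u),f(v)]$ is a bad interval for $\omega$ and $\omega\in A$. For membership in $\FFF$ I would show that $A$ equals a countable union of Borel ``test'' events: using $\var_p^{[u',v']}\le\var_p^{[u,v]}$ for $[u',v']\subseteq[u,v]$ together with a short uniform-continuity argument, a real bad interval with $\vi<2$ reduces to a rational subinterval on which $\omega$ stays non-constant and $\var_p<\infty$ for some rational $p<2$, whereas a real bad interval with $\vi>2$ is handled by passing to the rational superinterval $[0,M]$, $M>v$ rational, on which $\var_p=\infty$ persists for some rational $p>2$; conversely each such rational test event is contained in $A$. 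Since $\var_p^{[u,v]}$ is a Borel functional of $\omega$ (partitions with rational nodes suffice in the defining supremum), it follows that $A\in\FFF$, hence $A$ lies in the time-superinvariant class $\KKK$.

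Now fix $c=0$ and set $E:=A\cap\{\omega\in\Omega\st\omega(0)=0\}$. The event $\{\omega(0)=0\}$ is time-superinvariant (any time change fixes $\omega(0)$), so $E\in\KKK$, and Theorem~\ref{thm:main} gives $\UpProb(E)=\Wiener_0(E)=\Wiener_0(A)$. By the reduction above, $\Wiener_0(A)$ is the $\Wiener_0$-measure of a countable union of test events, each of which is $\Wiener_0$-null by L\'evy's theorem applied to the corresponding fixed rational interval; hence $\Wiener_0(A)=0$, so $E$ is null. To drop the restriction on $\omega(0)$, note that $A$ is invariant under adding a constant to $\omega$; so, given a positive capital process $\mathfrak{S}$ with $\mathfrak{S}_0<\epsilon$ and $\liminf_{t\to\infty}\mathfrak{S}_t\ge\III_E$, the process $\omega\mapsto\mathfrak{S}_t(\omega-\omega(0))$ is again a positive capital process with the same initial value (the translation trick of the proof of Corollary~\ref{cor:increase}) and witnesses $\UpProb(A)\le\epsilon$. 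Letting $\epsilon\downarrow0$ yields $\UpProb(A)=0$, which is the corollary.

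The main obstacle is the bookkeeping hidden in the second paragraph: establishing that the a priori uncountable event $A$ coincides with a countable union of Borel test events---so that $A\in\FFF$ and so that L\'evy's ``every interval'' statement can be invoked one fixed rational interval at a time---and confirming that $A$ is genuinely time-superinvariant. Once the event is correctly described, the appeal to Theorem~\ref{thm:main} and the translation trick are routine.
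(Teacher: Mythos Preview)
Your approach is correct and essentially the same as the paper's: identify the ``bad'' event, check it is time-superinvariant (the paper merely asserts this), observe it has Wiener measure zero by L\'evy's result, apply Theorem~\ref{thm:main} with $c=0$, and then use the translation trick from the proof of Corollary~\ref{cor:increase} to remove the restriction $\omega(0)=0$. You supply considerably more detail than the paper---the explicit verification of time-superinvariance, the reduction to countably many rational test events for measurability and for invoking L\'evy's theorem interval by interval---but the skeleton is identical.
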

\noindent
(The interval $[u,v]$ was assumed fixed in \cite{\CTII},
but this assumption is easy to get rid of.)
\begin{proof}
  Without loss of generality
  we restrict our attention to the $\omega$ satisfying $\omega(0)=0$
  (see the proof of Corollary~\ref{cor:increase}).
  Consider the set of $\omega\in\Omega$ such that,
  for some interval $[u,v]\subseteq[0,\infty)$,
  neither $\vi^{[u,v]}(\omega)=2$ nor $\omega$ is constant over $[u,v]$.
  This set is time-superinvariant
  (cf.\ Remark~\ref{rem:time-superinvariance}),
  and so in conjunction with Theorem~\ref{thm:main}
  L\'evy's result implies that it is null.
\end{proof}

Corollary \ref{cor:vi} says that, for typical $\omega$,
\begin{equation*}
  \var_p^{[u,v]}(\omega)
  \begin{cases}
    {}<\infty & \text{if $p>2$}\\
    {}=\infty & \text{if $p<2$ and $\omega$ is not constant}.
  \end{cases}
\end{equation*}
However, it does not say anything about the situation for $p=2$.
The following result completes the picture
(solving the problem posed in \cite{\CTII}, Section 5).
\begin{corollary}\label{cor:borderline}
  For typical $\omega\in\Omega$,
  the following is true.
  For any interval $[u,v]\subseteq[0,\infty)$ such that $u<v$,
  either $\var^{[u,v]}_2(\omega)=\infty$ or $\omega$ is constant over $[u,v]$.
\end{corollary}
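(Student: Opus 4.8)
The plan is to follow the proof of Corollary~\ref{cor:vi} almost verbatim, deducing the statement from Theorem~\ref{thm:main} together with a classical fact about the path variation of Brownian motion. As in the proof of Corollary~\ref{cor:increase}, I would first reduce to the case $\omega(0)=0$: the event $E$ consisting of those $\omega\in\Omega$ for which there is an interval $[u,v]\subseteq[0,\infty)$ with $u<v$ such that $\var^{[u,v]}_2(\omega)<\infty$ and $\omega$ is not constant on $[u,v]$ is invariant under $\omega\mapsto\omega+a$ for every $a\in\bbbr$. Hence a positive capital process witnessing the nullity of $E\cap\{\omega(0)=0\}$ can be applied to $\omega-\omega(0)$ to handle arbitrary initial values, and it suffices to prove that $E\cap\{\omega(0)=0\}$ is null.

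Next I would check that $E$ lies in the time-superinvariant class $\KKK$. That $E$ is an event follows by restricting the partitions in the definition of $\var_2$ and the endpoints $u,v$ to the rationals. For time-superinvariance, suppose $\omega^f\in E$, witnessed by $[u,v]$. Since $\omega^f=\omega\circ f$ is not constant on $[u,v]$, the increasing function $f$ is not constant on $[u,v]$, so $f(u)<f(v)$, and $f$ maps $[u,v]$ onto $[f(u),f(v)]$; therefore $\omega$ is not constant on $[f(u),f(v)]$. Pulling any partition of $[f(u),f(v)]$ back through $f$ to a partition of $[u,v]$ (possible since $f$ is onto) shows $\var^{[f(u),f(v)]}_2(\omega)\le\var^{[u,v]}_2(\omega^f)<\infty$, so $\omega\in E$, witnessed by $[f(u),f(v)]$. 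Thus $E\in\KKK$; since $\{\omega(0)=0\}\in\KKK$ too, Remark~\ref{rem:intersection} gives $E\cap\{\omega(0)=0\}\in\KKK$.

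Now Theorem~\ref{thm:main} applies with $c=0$ and yields $\UpProb\bigl(E\cap\{\omega(0)=0\}\bigr)=\Wiener_0\bigl(E\cap\{\omega(0)=0\}\bigr)=\Wiener_0(E)$. It remains to invoke the classical fact that for each fixed non-degenerate interval $[u,v]$, Brownian motion has $\var^{[u,v]}_2=\infty$ almost surely (the strong $2$-variation, i.e.\ the supremum over all partitions, rather than the finite limit of $\sum|\omega(t_i)-\omega(t_{i-1})|^2$ along refining partitions). Since $E$ is contained in the countable union over rational pairs $u<v$ of the events $\{\var^{[u,v]}_2<\infty\}$, we get $\Wiener_0(E)=0$; hence $E\cap\{\omega(0)=0\}$ is null, and the reduction in the first paragraph finishes the proof.

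I expect the delicate points to be, on the game-theoretic side, the verification that $E\in\KKK$ — in particular checking that the witnessing interval remains non-degenerate after a time change (so $f(u)<f(v)$) and that finiteness of $\var_2$ transfers in the right direction under $f$ — and, on the measure-theoretic side, pinning down the correct input, namely the infinitude of the \emph{strong} $2$-variation of Brownian motion (a result in the spirit of Taylor's exact-variation theorems), which is easy to conflate with the convergence of the quadratic variation along refining partitions.
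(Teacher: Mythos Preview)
Your argument is correct and matches the paper's proof essentially line for line: define the exceptional set, check time-superinvariance, apply Theorem~\ref{thm:main}, and conclude from the infinitude of the strong $2$-variation of Brownian motion (which the paper attributes to L\'evy~\cite{levy:1940} rather than Taylor). Your write-up is simply more explicit than the paper's, spelling out the reduction to $\omega(0)=0$, the measurability via rational endpoints, and the verification of (\ref{eq:invariant}) that the paper leaves to the reader.
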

\begin{proof}
  L\'evy \cite{Levy:1940} proves for Brownian motion
  that $\var^{[u,v]}_2(\omega)=\infty$ almost surely
  (for fixed $[u,v]$, which implies the statement for all $[u,v]$).
  Consider the set of $\omega\in\Omega$ such that,
  for some interval $[u,v]\subseteq[0,\infty)$,
  neither $\var^{[u,v]}_2(\omega)=\infty$ nor $\omega$ is constant over $[u,v]$.
  This set is time-superinvariant,
  and so in conjunction with Theorem~\ref{thm:main}
  L\'evy's result implies that it is null.
\end{proof}

\subsection{More precise results}

Theorem~\ref{thm:main} allows us to deduce much stronger results
than Corollaries~\ref{cor:vi} and \ref{cor:borderline}
from known results about Brownian motion.

Define $\ln^*u:=1\vee\left|\ln u\right|$, $u>0$,
and let $\psi:[0,\infty)\to[0,\infty)$ be Taylor's \cite{Taylor:1972} function
\begin{equation*} 
  \psi(u)
  :=
  \frac{u^2}{2\ln^*\ln^*u}
\end{equation*}
(with $\psi(0):=0$).
For $\omega\in\Omega$, $T\in[0,\infty)$, and $\phi:[0,\infty)\to[0,\infty)$, set
\begin{equation*}
  \var_{\phi,T}(\omega)
  :=
  \sup_\kappa
  \sum_{i=1}^{n_{\kappa}}
  \phi
  \left(\left|
    \omega(t_{i})-\omega(t_{i-1})
  \right|\right),
\end{equation*}
where $\kappa$ ranges
over all partitions $0=t_0\le t_1\le\cdots\le t_{n_{\kappa}}=T$ of $[0,T]$.
In the previous subsection we considered the case $\phi(u):=u^p$;
another interesting case is $\phi:=\psi$.
See \cite{Bruneau:1975} for a much more explicit expression
for $\var_{\psi,T}(\omega)$.

\ifFULL\bluebegin
  I am using the notation $\var_{\phi,T}(\omega)$
  in place of $\var_{\phi}^{[0,T]}(\omega)$
  to avoid clash of notation with $\var_{p}^{[0,T]}(\omega)$.
\blueend\fi

\begin{corollary}\label{cor:Taylor-all}
  For typical $\omega$,
  \begin{equation*} 
    \forall T\in[0,\infty):
    \var_{\psi,T}(\omega)<\infty.
  \end{equation*}
  Suppose $\phi:[0,\infty)\to[0,\infty)$ is such that $\psi(u)=o(\phi(u))$ as $u\to0$.
  For typical $\omega$,
  \begin{equation*} 
    \forall T\in[0,\infty):
    \text{$\omega$ is constant on $[0,T]$ or $\var_{\phi,T}(\omega)=\infty$}.
  \end{equation*}
\end{corollary}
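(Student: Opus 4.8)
The plan is to deduce both parts of Corollary~\ref{cor:Taylor-all} from Theorem~\ref{thm:main} together with the corresponding facts about Brownian motion, exactly as in the proofs of Corollaries~\ref{cor:vi} and~\ref{cor:borderline}. First I would reduce to the case $\omega(0)=0$: as in the proof of Corollary~\ref{cor:increase}, if $\mathfrak{S}$ is a positive capital process witnessing that some event is null among the $\omega$ with $\omega(0)=0$, then applying $\mathfrak{S}$ to $\omega-\omega(0)$ gives a positive capital process witnessing the same for all $\omega$ (here one uses that $\var_{\phi,T}$ depends only on the increments of $\omega$, hence is unchanged by the shift $\omega\mapsto\omega-\omega(0)$).

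Next I would check that the two relevant subsets of $\Omega$ are time-superinvariant, so that Theorem~\ref{thm:main} applies. For the first part, let $E_1:=\{\omega\in\Omega : \exists T\in[0,\infty)\ \var_{\psi,T}(\omega)=\infty\}$; for the second part, given $\phi$ with $\psi(u)=o(\phi(u))$ as $u\to0$, let $E_2:=\{\omega\in\Omega : \exists T\in[0,\infty)\ \omega\text{ is not constant on }[0,T]\text{ and }\var_{\phi,T}(\omega)<\infty\}$. If $\omega^f\in E_i$ for a time change $f$, then the witnessing horizon $T$ for $\omega^f$ yields the horizon $f(T)$ for $\omega$: indeed $\var_{\phi,T}(\omega^f)$ is a supremum of sums $\sum_i\phi(|\omega(f(t_i))-\omega(f(t_{i-1}))|)$ over partitions $0=t_0\le\cdots\le t_{n}=T$, and since $f$ is continuous, increasing and maps $0$ to $0$, the points $f(t_i)$ range over a cofinal family of partitions of $[0,f(T)]$ (every partition point of $[0,f(T)]$ is $f$ of some point of $[0,T]$); hence $\var_{\phi,T}(\omega^f)=\var_{\phi,f(T)}(\omega)$, and likewise constancy of $\omega^f$ on $[0,T]$ is equivalent to constancy of $\omega$ on $[0,f(T)]$. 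So $\omega\in E_i$, proving $E_i\in\KKK$ after intersecting with the event $\{\omega(0)=0\}$ (which is itself in $\KKK$, being invariant under all time changes).

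Finally I would invoke Taylor's theorem. Taylor \cite{taylor:1972} proves that for Brownian motion $\var_{\psi,T}$ is almost surely finite for every fixed $T$, hence (taking a countable union over rational $T$ and monotonicity in $T$) almost surely finite for all $T$ simultaneously; thus $\Wiener_0(E_1)=0$. By Theorem~\ref{thm:main}, $\UpProb(E_1\cap\{\omega(0)=0\})=\Wiener_0(E_1)=0$, which after the shift argument gives the first display. For the second part, if $\psi(u)=o(\phi(u))$ as $u\to 0$ then the $\phi$-variation dominates the $\psi$-variation near the origin, and the sharp form of Taylor's result — that $\psi$ is a \emph{minimal} variation gauge, i.e.\ for any strictly larger gauge the corresponding variation of Brownian motion is almost surely infinite on every non-degenerate interval — yields $\Wiener_0(E_2)=0$; Theorem~\ref{thm:main} and the shift argument then give the second display. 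The one point requiring care, and the main obstacle, is the second invocation of Taylor's work: one must cite (or derive) not merely finiteness of $\var_{\psi,T}$ but the companion statement that any asymptotically larger gauge produces infinite variation, and one must handle the ``there exists $T$ with $\omega$ non-constant on $[0,T]$'' quantifier by reducing to rational $T$ and using that $\omega$ non-constant on some interval implies $\omega$ non-constant on $[0,T]$ for some rational $T$.
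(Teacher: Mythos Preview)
Your proposal is correct and follows exactly the approach the paper intends (the paper gives no explicit proof for this corollary, leaving it to the reader to repeat the pattern of Corollaries~\ref{cor:vi} and~\ref{cor:borderline}): reduce to $\omega(0)=0$, verify time-superinvariance of the exceptional sets via the identity $\var_{\phi,T}(\omega\circ f)=\var_{\phi,f(T)}(\omega)$, and invoke Theorem~\ref{thm:main} together with Taylor's \cite{taylor:1972} results for Brownian motion. Your observation that surjectivity of $f:[0,T]\to[0,f(T)]$ makes the map $\kappa\mapsto f(\kappa)$ onto partitions of $[0,f(T)]$ is the right justification for that identity.
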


Corollary~\ref{cor:Taylor-all} refines Corollaries~\ref{cor:vi} and~\ref{cor:borderline};
it will be further strengthened by Corollary~\ref{cor:Taylor}.

\ifFULL\bluebegin
  Corollary~\ref{cor:Taylor-all} immediately implies
  that $\vi(\omega)=2$ and $\var_2(\omega)=\infty$ for typical $\omega$,
  as claimed above.
\blueend\fi

The quantity $\var_{\psi,T}(\omega)$ is not
nearly as fundamental as the following quantity
introduced by Taylor \cite{Taylor:1972}:
for $\omega\in\Omega$ and $T\in[0,\infty)$, set
\begin{equation}\label{eq:qvar}
  \qvar_T(\omega)
  :=
  \lim_{\delta\to0}
  \sup_{\kappa\in K_{\delta}[0,T]}
  \sum_{i=1}^{n_{\kappa}}
  \psi
  \left(
    \left|
      \omega(t_i)
      -
      \omega(t_{i-1})
    \right|
  \right),
\end{equation}
where $K_{\delta}[0,T]$ is the set of all partitions $0=t_0\le\cdots\le t_{n_{\kappa}}=T$
of $[0,T]$ whose mesh is less than $\delta$:
$\max_i(t_i-t_{i-1})<\delta$.
Notice that the expression after the $\lim_{\delta\to0}$ in (\ref{eq:qvar}) is
increasing in $\delta$;
therefore, $\qvar_T(\omega)\le\var_{\psi,T}(\omega)$.

The following corollary contains Corollaries~\ref{cor:vi}--\ref{cor:Taylor-all}
as special cases.
It is similar to Corollary~\ref{cor:Taylor-all}
but is stated in terms of the process $\qvar$.
\begin{corollary}\label{cor:Taylor}
  For typical $\omega$,
  \begin{equation}\label{eq:Taylor}
    \forall T\in[0,\infty):
    \text{$\omega$ is constant on $[0,T]$ or $\qvar_T(\omega)\in(0,\infty)$}.
  \end{equation}
\end{corollary}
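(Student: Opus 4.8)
The plan is to reduce Corollary~\ref{cor:Taylor} to known facts about Brownian motion via Theorem~\ref{thm:main}, exactly along the lines of the proofs of Corollaries~\ref{cor:vi}, \ref{cor:borderline}, and~\ref{cor:Taylor-all}. As in the proof of Corollary~\ref{cor:increase}, I would first restrict attention to the $\omega\in\Omega$ with $\omega(0)=0$; the general case then follows by applying to $\omega-\omega(0)$ any positive capital process witnessing nullity in the restricted case. So fix $c=0$ and let $N$ denote the set of $\omega\in\Omega$ for which (\ref{eq:Taylor}) fails, i.e.\ for which there exists $T\in[0,\infty)$ with $\omega$ not constant on $[0,T]$ and $\qvar_T(\omega)\notin(0,\infty)$.

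The key structural point is that $N$ is time-superinvariant, so that $N\cap\{\omega(0)=0\}\in\KKK$ and Theorem~\ref{thm:main} gives $\UpProb(N\cap\{\omega(0)=0\})=\Wiener_0(N)$. To see time-superinvariance, suppose $\omega^f\in N$ for some time change $f$; I must show $\omega\in N$. Pick $T$ witnessing $\omega^f\in N$: $\omega^f$ is non-constant on $[0,T]$ and $\qvar_T(\omega^f)\notin(0,\infty)$. Set $T':=f(T)$. Then $\omega$ is non-constant on $[0,T']$ (otherwise $\omega^f=\omega\circ f$ would be constant on $[0,T]$, since $f$ maps $[0,T]$ into $[0,T']$). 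The remaining task is to compare $\qvar_{T'}(\omega)$ with $\qvar_T(\omega^f)$. Here one uses that $\qvar$ as defined in (\ref{eq:qvar}) is, up to the technicalities of the $\delta\to0$ limit, invariant under reparametrization: a partition of $[0,T]$ with small mesh is pushed forward by $f$ to a partition of $[0,T']$, and the sums $\sum\psi(|\omega^f(t_i)-\omega^f(t_{i-1})|)=\sum\psi(|\omega(f(t_i))-\omega(f(t_{i-1}))|)$ coincide with sums for $\omega$ over the image partition; conversely the uniform continuity of $\omega$ on $[0,T']$ lets one control the mesh on the image side. Since we only need to know whether $\qvar$ lands in $(0,\infty)$ or not — not its exact value — the argument is robust: $\qvar_{T'}(\omega)\in(0,\infty)$ would force $\qvar_T(\omega^f)\in(0,\infty)$, contradicting $\omega^f\in N$. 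Hence $T'$ witnesses $\omega\in N$.

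With time-superinvariance in hand, it remains to invoke Taylor's theorem \cite{taylor:1972}: for Brownian motion started at $0$, almost surely $\qvar_T(\omega)=T$ for every $T$, so in particular $\qvar_T(\omega)\in(0,\infty)$ whenever $T>0$, while $\qvar_0(\omega)=0$ and $\omega$ is trivially constant on $[0,0]$. Thus $\Wiener_0(N)=0$, and Theorem~\ref{thm:main} yields $\UpProb(N\cap\{\omega(0)=0\})=0$; removing the restriction $\omega(0)=0$ as above gives $\UpProb(N)=0$, which is the assertion of the corollary. That Corollary~\ref{cor:Taylor} contains Corollaries~\ref{cor:vi}--\ref{cor:Taylor-all} then follows from the inequality $\qvar_T(\omega)\le\var_{\psi,T}(\omega)$ noted before the statement together with the comparison $\psi(u)=o(\phi(u))$ for the divergence half.

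The main obstacle is the time-invariance argument for $\qvar$: unlike the plain $p$-variation, $\qvar_T$ involves the mesh-refinement limit $\lim_{\delta\to0}$, and a time change $f$ can be constant on subintervals (collapsing them) and can distort mesh sizes. One must check that these effects do not move $\qvar$ in or out of $(0,\infty)$. The collapsing of intervals is harmless because $\psi(0)=0$, so deleting repeated partition points costs nothing; the mesh distortion is handled by the uniform continuity of $\omega$ on compact intervals, which makes ``small mesh in $t$'' and ``small oscillation of $\omega$'' interchangeable for the purpose of the limit. One should also dispose of the boundary case $\qvar_T(\omega)=0$: for a non-constant $\omega$ this can only happen if the oscillation of $\omega$ is not captured at any fixed mesh scale, and one checks this is preserved under $f$ as well. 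I expect this verification to be routine but slightly tedious, and I would keep it brief, citing the analogous reparametrization-invariance discussion already used implicitly in \cite{\CTII}.
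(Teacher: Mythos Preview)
Your approach is essentially identical to the paper's: restrict to $\omega(0)=0$, verify that the failure set of (\ref{eq:Taylor}) is time-superinvariant, invoke Taylor's theorem for Brownian motion, and apply Theorem~\ref{thm:main}. The paper isolates the reparametrization step you flag as the ``main obstacle'' into a separate Lemma~\ref{lem:inv}, proving the exact identity $\qvar_{T}(\omega\circ f)=\qvar_{f(T)}(\omega)$ (via the partition-refinement trick of inserting extra points so that the pulled-back partition has small mesh while changing the sum by at most $\epsilon$) rather than merely preservation of membership in $(0,\infty)$; this makes the argument you sketch precise and is worth stating cleanly.
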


\begin{proof}
  First let us check that under the Wiener measure
  (\ref{eq:Taylor}) holds 
  for almost all $\omega$.
  It is sufficient to prove that $\qvar_T=T$ for all $T\in[0,\infty)$ a.s.
  Furthermore, it is sufficient to consider only rational $T\in[0,\infty)$.
  Therefore, it is sufficient to consider a fixed rational $T\in[0,\infty)$.
  And for a fixed $T$, $\qvar_T=T$ a.s.\ follows from Taylor's result
  (\cite{Taylor:1972}, Theorem~1).

  As usual, let us restrict our attention to the case $\omega(0)=0$.
  In view of Theorem~\ref{thm:main} it suffices to check
  that the complement of the event (\ref{eq:Taylor}) is time-superinvariant,
  i.e., to check (\ref{eq:invariant}),
  where $E$ is the complement of (\ref{eq:Taylor}).
  In other words, it suffices to check that $\omega^{f}=\omega\circ f$
  satisfies (\ref{eq:Taylor})
  whenever $\omega$ satisfies (\ref{eq:Taylor}).
  This follows from Lemma~\ref{lem:inv} below,
  which says that $\qvar_{T}(\omega\circ f)=\qvar_{f(T)}(\omega)$.
\end{proof}

\begin{lemma}\label{lem:inv}
  Let $T\in[0,\infty)$, $\omega\in\Omega$, and $f$ be a time transformation.
  Then $\qvar_{T}(\omega\circ f)=\qvar_{f(T)}(\omega)$.
\end{lemma}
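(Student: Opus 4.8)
The plan is to prove the two inequalities $\qvar_{T}(\omega\circ f)\le\qvar_{f(T)}(\omega)$ and $\qvar_{f(T)}(\omega)\le\qvar_{T}(\omega\circ f)$ separately, using the evident correspondence between partitions. Write $\Sigma(g,\kappa):=\sum_{i}\psi(\lvert g(t_i)-g(t_{i-1})\rvert)$ for a partition $\kappa=(t_0\le\cdots\le t_{n_\kappa})$. If $\kappa=(0=t_0\le\cdots\le t_{n_\kappa}=T)$ is a partition of $[0,T]$, then $f(\kappa):=(0=f(t_0)\le\cdots\le f(t_{n_\kappa})=f(T))$ is a partition of $[0,f(T)]$ (repeated points, contributing $0$, being allowed), and since $(\omega\circ f)(t_i)=\omega(f(t_i))$ we have $\Sigma(\omega\circ f,\kappa)=\Sigma(\omega,f(\kappa))$. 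I will also use that the supremum over $K_\delta[0,T]$ in~(\ref{eq:qvar}) is increasing in $\delta$, so the $\lim_{\delta\to0}$ there is an infimum. The first inequality is then immediate: given $\epsilon>0$, uniform continuity of $f$ on $[0,T]$ yields $\delta>0$ with $\lvert f(t)-f(s)\rvert<\epsilon$ whenever $\lvert t-s\rvert<\delta$, so $\kappa\in K_\delta[0,T]$ forces $f(\kappa)\in K_\epsilon[0,f(T)]$ and hence $\Sigma(\omega\circ f,\kappa)=\Sigma(\omega,f(\kappa))\le\sup_{K_\epsilon[0,f(T)]}\Sigma(\omega,\cdot)$; taking the supremum over $\kappa$ and then $\epsilon\to0$ gives $\qvar_{T}(\omega\circ f)\le\qvar_{f(T)}(\omega)$.

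For the reverse inequality I would fix $\delta>0$ and $\eta>0$ and look for $\epsilon>0$ so that every partition $\kappa'=(0=s_0<\cdots<s_m=f(T))$ of $[0,f(T)]$ with mesh $<\epsilon$ satisfies $\Sigma(\omega,\kappa')\le\sup_{K_\delta[0,T]}\Sigma(\omega\circ f,\cdot)+\eta$; then letting $\epsilon\to0$, $\eta\to0$, $\delta\to0$ in turn finishes. Since $f$ is continuous, increasing and $f(0)=0$, it maps $[0,T]$ onto $[0,f(T)]$, and the preimages $f^{-1}(\{s_j\})\cap[0,T]$ are pairwise disjoint nonempty closed intervals occurring in increasing order; picking $t_0:=0$, $t_m:=T$ and $t_j\in f^{-1}(\{s_j\})$ for $0<j<m$ gives a partition $\kappa=(0=t_0<\cdots<t_m=T)$ of $[0,T]$ with $\Sigma(\omega\circ f,\kappa)=\Sigma(\omega,\kappa')$. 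This $\kappa$ need not have small mesh, so I would refine it to $\hat\kappa\in K_\delta[0,T]$ by subdividing each block $[t_{j-1},t_j]$ of length $\ge\delta$ into pieces of length $<\delta$ while leaving the shorter blocks alone; since $\sum_j(t_j-t_{j-1})=T$, at most $T/\delta$ blocks are subdivided.

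The crux is bounding the drop $\Sigma(\omega\circ f,\kappa)-\Sigma(\omega\circ f,\hat\kappa)$, which can be positive in general because $\psi$ is superadditive near $0$ (it is asymptotic to $u^2/(2\ln^*\ln^*u)$), so refining a partition can lower $\Sigma$. Subdividing one block $[t_{j-1},t_j]$ replaces its contribution $\psi(\lvert(\omega\circ f)(t_j)-(\omega\circ f)(t_{j-1})\rvert)$ by a nonnegative quantity, so the loss on that block is at most $\psi(\lvert(\omega\circ f)(t_j)-(\omega\circ f)(t_{j-1})\rvert)=\psi(\lvert\omega(s_j)-\omega(s_{j-1})\rvert)$ (here $f$ maps the whole block into $[s_{j-1},s_j]$); and since $\lvert s_j-s_{j-1}\rvert<\epsilon$, uniform continuity of $\omega$ on $[0,f(T)]$ bounds this by $\psi(\varpi_\omega(\epsilon))$, where $\varpi_\omega$ is the modulus of continuity of $\omega$. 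Summing over the at most $T/\delta$ subdivided blocks, the total drop is at most $(T/\delta)\,\psi(\varpi_\omega(\epsilon))$, which for fixed $\delta$ tends to $0$ as $\epsilon\to0$ because $\psi$ is continuous at $0$ with $\psi(0)=0$; choosing $\epsilon$ so that this is below $\eta$ completes the estimate, and the rest is routine bookkeeping with the two moduli of continuity. So the only real obstacle is the superadditivity of $\psi$, which is neutralised by the pigeonhole bound on the number of long blocks together with the fact that $f$ compresses each long block into an interval on which $\omega$ barely moves.
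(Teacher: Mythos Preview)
Your proof is correct and follows essentially the same route as the paper: the easy inequality via uniform continuity of $f$, and the reverse by pulling back a fine partition of $[0,f(T)]$ to $[0,T]$ and then refining it to have small mesh, controlling the loss by the number of refinement points times a bound on each dropped $\psi$-term. The only cosmetic differences are that the paper refines by taking the union with a fixed grid $\kappa_0$ of $N$ points (so at most $N$ intervals are affected) whereas you subdivide only the long blocks (at most $T/\delta$ of them), and you are more explicit than the paper in invoking the modulus of continuity of $\omega$ to bound the lost $\psi$-terms.
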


\begin{proof}
  Fix $T\in[0,\infty)$, $\omega\in\Omega$, a time transformation $f$,
  and $c\in[0,\infty]$.
  Our goal is to prove
  \begin{multline}\label{eq:inv}
    \lim_{\delta\to0}
    \sup_{\kappa\in K_{\delta}[0,f(T)]}
    \sum_{i=1}^{n_{\kappa}}
    \psi
    \left(
      \left|
        \omega(t_i)
	-
        \omega(t_{i-1})
      \right|
    \right)
    =
    c\\
    \Longrightarrow
    \lim_{\delta\to0}
    \sup_{\kappa\in K_{\delta}[0,T]}
    \sum_{i=1}^{n_{\kappa}}
    \psi
    \left(
      \left|
        \omega(f(t_i))
	-
        \omega(f(t_{i-1}))
      \right|
    \right)
    =
    c,
  \end{multline}
  in the notation of (\ref{eq:qvar}).
  Suppose the antecedent in (\ref{eq:inv}) holds.
  Notice that the two $\lim_{\delta\to0}$ in (\ref{eq:inv})
  can be replaced by $\inf_{\delta>0}$.

  To prove that the limit on the right-hand side of (\ref{eq:inv})
  is $\le c$,
  take any $\epsilon>0$.
  We will assume $c<\infty$ (the case $c=\infty$ is trivial).
  Let $\delta>0$ be so small that
  \begin{equation*}
    \sup_{\kappa\in K_{\delta}[0,f(T)]}
    \sum_{i=1}^{n_{\kappa}}
    \psi
    \left(
      \left|
        \omega(t_i)
	-
        \omega(t_{i-1})
      \right|
    \right)
    <
    c+\epsilon.
  \end{equation*}
  Let $\delta'>0$ be so small that
  $
    \lvert t-t'\rvert < \delta'
    \Longrightarrow
    \lvert f(t)-f(t')\rvert < \delta
  $.
  Since $f(\kappa)\in K_{\delta}[0,f(T)]$ whenever $\kappa\in K_{\delta'}[0,T]$,
  \begin{equation*}
    \sup_{\kappa\in K_{\delta'}[0,T]}
    \sum_{i=1}^{n_{\kappa}}
    \psi
    \left(
      \left|
        \omega(f(t_i))
	-
        \omega(f(t_{i-1}))
      \right|
    \right)
    <
    c+\epsilon.
  \end{equation*}

  To prove that the limit on the right-hand side of (\ref{eq:inv})
  is $\ge c$,
  take any $\epsilon>0$ and $\delta'>0$.
  We will assume $c<\infty$ (the case $c=\infty$ can be considered analogously).
  Place a finite number $N$ of points including $0$ and $T$ onto the interval $[0,T]$
  so that the distance between any pair of adjacent points
  is less than $\delta'$;
  this set of points will be denoted $\kappa_0$.
  Let $\delta>0$ be so small that
  $\psi(\lvert\omega(t'')-\omega(t')\rvert)<\epsilon/N$
  whenever $\lvert t''-t'\rvert<\delta$.
  Choose a partition
  $\kappa=\{t_0,\ldots,t_{n}\}\in K_{\delta}[0,f(T)]$
  satisfying
  \begin{equation*}
    \sum_{i=1}^{n}
    \psi
    \left(
      \left|
        \omega(t_i)
	-
        \omega(t_{i-1})
      \right|
    \right)
    >
    c-\epsilon.
  \end{equation*}
  Let $\kappa'=\{t'_0,\ldots,t'_{n}\}$
  be a partition of the interval $[0,T]$
  satisfying $f(\kappa')=\kappa$.
  This partition will satisfy
  \begin{equation*}
    \sum_{i=1}^{n}
    \psi
    \left(
      \left|
        \omega(f(t'_i))
	-
        \omega(f(t'_{i-1}))
      \right|
    \right)
    >
    c-\epsilon,
  \end{equation*}
  and the union $\kappa''=\{t''_0,\ldots,t''_{N+n}\}$
  (with its elements listed in the increasing order)
  of $\kappa_0$ and $\kappa'$ will satisfy
  \begin{equation*}
    \sum_{i=1}^{N+n}
    \psi
    \left(
      \left|
        \omega(f(t''_i))
	-
        \omega(f(t''_{i-1}))
      \right|
    \right)
    >
    c-2\epsilon.
  \end{equation*}
  Since $\kappa''\in K_{\delta'}[0,T]$
  and $\epsilon$ and $\delta'$ can be taken arbitrarily small,
  this completes the proof.
\end{proof}

The value $\qvar_T(\omega)$ defined by (\ref{eq:qvar})
can be interpreted as the quadratic variation of the price path $\omega$
over the time interval $[0,T]$.
Another non-stochastic definition of quadratic variation
(see (\ref{eq:A-limit}))
will serve us in Section~\ref{sec:result-constructive}
as the basis for the proof of Theorem~\ref{thm:main}.
For the equivalence of the two definitions,
see Remark~\ref{rem:coincidence}.


\subsection{Limitations of Theorem~\ref{thm:main}}

We said earlier that Theorem~\ref{thm:main}
implies the main result of \cite{\CTI} (see Corollary~\ref{cor:increase}).
This is true in the sense that the extra game-theoretic argument
used in the proof of Corollary~\ref{cor:increase} was very simple.
But this simple argument was essential:
in this subsection we will see
that Theorem~\ref{thm:main} \emph{per se}
does not imply the full statement of Corollary~\ref{cor:increase}.

Let $c\in\bbbr$
and $E\subseteq\Omega$ be such that $\omega(0)=c$ for all $\omega\in E$.
Suppose the set $E$ is null.
We can say that the equality $\UpProb(E)=0$
can be deduced from Theorem~\ref{thm:main}
and the properties of Brownian motion
if (and only if) $\Wiener_c(\overline{E})=0$,
where $\overline{E}$ is the smallest time-superinvariant set
containing $E$
(it is clear that such a set exists and is unique).
It would be nice if all equalities $\UpProb(E)=0$,
for all null sets $E$ satisfying
$\forall\omega\in E:\omega(0)=c$,
could be deduced from Theorem~\ref{thm:main}
and the properties of Brownian motion.
We will see later (Proposition~\ref{prop:counterexample})
that this is not true even for some fundamental null events $E$;
an example of such an event will now be given.

Let us say that a closed interval $[t_1,t_2]\subseteq[0,\infty)$
is an \emph{interval of local maximum} for $\omega\in\Omega$
if (a) $\omega$ is constant on $[t_1,t_2]$
but not constant on any larger interval containing $[t_1,t_2]$,
and (b) there exists $\delta>0$ such that $\omega(s)\le\omega(t)$
for all $s\in((t_1-\delta)^+,t_1)\cup(t_2,t_2+\delta)$
and all $t\in[t_1,t_2]$.
In the case where $t_1=t_2$
we will say ``point'' instead of ``interval''.
It is shown in \cite{\CTI} (Corollary~3)
that, for typical $\omega$, all intervals of local maximum are points;
this also follows from Corollary~\ref{cor:increase},
and is very easy to check directly
(using the same argument as in the proof of Corollary~\ref{cor:increase}).
Let $E$ be the null event that $\omega(0)=c$
and not all intervals of local maximum of $\omega$ are points.
Proposition~\ref{prop:counterexample} says
that $\UpProb(E)=0$ cannot be deduced from Theorem~\ref{thm:main}
and the properties of Brownian motion.
This implies that Corollary~\ref{cor:increase}
also cannot be deduced from Theorem~\ref{thm:main}
and the properties of Brownian motion,
despite the fact that the deduction is possible
with the help of a very easy game-theoretic argument.

Before stating and proving Proposition~\ref{prop:counterexample},
we will introduce formally
the operator $E\mapsto\overline{E}$
and show that it is a bona fide closure operator.
For each $E\subseteq\Omega$,
$\overline{E}$ is defined to be the union of the trails of all points in $E$.
It can be checked that $E\mapsto\overline{E}$
satisfies the standard properties of closure operators:
$\overline{\emptyset}=\emptyset$ and
$\overline{E_1\cup E_2}=\overline{E_1}\cup\overline{E_2}$
are obvious,
and $\overline{\overline{E}}=\overline{E}$ and $E\subseteq\overline{E}$
follow from the fact that the time transformations constitute a monoid.
Therefore (\cite{Engelking:1989}, Theorem 1.1.3 and Proposition 1.2.7),
$E\mapsto\overline{E}$
is the operator of closure in some topology on $\Omega$,
which may be called the \emph{time-superinvariant topology}.
A set $E\subseteq\Omega$ is closed in this topology
if and only if it contains the trail of any of its elements.
\ifFULL\bluebegin
  If the time transformations formed a group,
  every closed set in the time-(super)invariant topology
  would have been open.
\blueend\fi

\begin{proposition}\label{prop:counterexample}
  Let $c\in\bbbr$
  and $E$ be the set of all $\omega\in\Omega$ such that $\omega(0)=c$
  and $\omega$ has an interval of local maximum
  that is not a point.
  Then $E$ and $\overline{E}$ are events and
  \begin{equation}\label{eq:counterexample}
    0
    =
    \Wiener_c
    \left(
      E
    \right)
    =
    \UpProb(E)
    <
    \UpProb
    \left(
      \overline{E}
    \right)
    =
    \Wiener_c
    \left(
      \overline{E}
    \right)
    =
    1.
  \end{equation}
\end{proposition}
\begin{proof}
  For the equality $\UpProb(E)=0$, see above.
  The equality $\Wiener_c(E)=0$ is a well-known fact
  (and follows from $\UpProb(E)=0$ and Lemma~\ref{lem:main-ge} below).
  It suffices to prove that $\overline{E}\in\FFF$ and $\Wiener_c(\overline{E})=1$;
  Theorem~\ref{thm:main} will then imply $\UpProb(\overline{E})=1$.
  The inclusion $\overline{E}\in\FFF$ and equality $\Wiener_c(\overline{E})=1$
  follow from the following explicit description of $\overline{E}$:
  this set consists of all $\omega\in\Omega$ with $\omega(0)=c$
  that are not increasing functions.
  This can be seen from Remark~\ref{rem:time-superinvariance}
  or from the following argument.
  If $\omega$ is increasing,
  $\omega^f$ will also be increasing for any time transformation $f$.
  Combining this with (\ref{eq:invariant}),
  we can see that the set of all $\omega$ that are not increasing is time-superinvariant;
  since this set contains $E$,
  it also contains $\overline{E}$.
  In the opposite direction,
  we are required to show that any $\omega\in\Omega$ that is not increasing
  is an element of $\overline{E}$,
  i.e., there exists a time transformation $f$ such that $\omega^f\in E$.
  Fix such $\omega$
  and find $0\le a<b$ such that $\omega(a)>\omega(b)$.
  Let $m\in[0,b]$
  be the smallest element of $\arg\max_{t\in[0,b]}\omega(t)$.
  Applying the time transformation
  \begin{equation*}
    f(t)
    :=
    \begin{cases}
      t & \text{if $t<m$}\\
      m & \text{if $m\le t<m+1$}\\
      t-1 & \text{if $t\ge m+1$}
    \end{cases}
  \end{equation*}
  to $\omega$ we obtain an element of $E$.
\end{proof}

\begin{remark}
  Another event $E$ that satisfies
  (\ref{eq:counterexample})
  is the set of all $\omega\in\Omega$ such that $\omega(0)=c$
  and $\omega$ has an interval of local maximum
  that is not a point
  or has an interval of local minimum
  that is not a point
  (with the obvious definition of intervals and points of local minimum).
  Then $\overline{E}$ is the event
  that consists of all non-constant $\omega$ with $\omega(0)=c$.
  This is the largest possible $\overline{E}$ for $E$ satisfying $\UpProb(E)=0$
  (provided we consider only $\omega$ with $\omega(0)=c$):
  indeed, if the constant $c$ is in $\overline{E}$,
  $c$ will also be in $E$,
  and so $\UpProb(E)=1$.
\end{remark}

Proposition~\ref{prop:counterexample} shows that Theorem~\ref{thm:main}
does not make all other game-theoretic arguments redundant.
What is interesting is that already very simple arguments suffice
to deduce all results in \cite{\CTI,\CTII}.

\begin{remark}\label{rem:direct}
  Theorem~\ref{thm:main} does not make the game-theoretic arguments
  in \cite{\CTI,\CTII} redundant
  also in another, perhaps even more important, respect.
  For example,
  Corollary~\ref{cor:increase} is an existence result:
  it asserts the existence of a trading strategy
  whose capital process is positive and increases from $1$ to $\infty$
  when $\omega$ has a point $t$ of increase or decrease
  such that $\omega$ is not locally constant to the right of $t$.
  In principle, such a strategy could be extracted
  from the proof of Theorem~\ref{thm:main},
  but it would be extremely complicated and non-intuitive;
  the result would remain essentially an existence result.
  The proof of Theorem~2 in \cite{\CTI}, on the contrary,
  constructs an explicit trading strategy exploiting
  the existence of points of increase or decrease.
  Similarly, the proof of Theorem~1 in \cite{\CTII}
  constructs an explicit trading strategy
  whose existence is asserted in Corollary~\ref{cor:vi}.
  The recent paper \cite{\CTV} partially extends Corollary~\ref{cor:vi}
  to discontinuous price paths
  showing that $\vi^{[0,T]}(\omega)\le2$
  for all $T<\infty$ for typical $\omega$.
  The trading strategy constructed in \cite{\CTV}
  for profiting from $\vi^{[0,T]}(\omega)>2$ is especially intuitive:
  it just combines
  (following Stricker's \cite{Stricker:1979} idea)
  the strategies for profiting from
  $\liminf_t\omega(t)<a<b<\limsup_t\omega(t)$
  implicit in the standard proof of Doob's martingale convergence theorem.
\end{remark}

\begin{remark}
  All results discussed in this section
  are about sets of upper price zero or lower price one,
  and one might suspect that the class $\KKK$ is so small
  that $\Wiener_c(E)\in\{0,1\}$ for all $c\in\bbbr$
  and all $E\in\KKK$ such that $\omega(0)=c$ when $\omega\in E$;
  this would have been another limitation of Theorem~\ref{thm:main}.
  However, it is easy to check that for each $p\in[0,1]$ and each $c\in\bbbr$
  there exists $E\in\KKK$ satisfying $\omega(0)=c$ for all $\omega\in E$
  and satisfying $\Wiener_c(E)=p$.
  Indeed, without loss of generality we can take $c:=p$,
  and we can then define $E$ to be the event that $\omega(0)=p$,
  $\omega$ reaches levels 0 and 1,
  and $\omega$ reaches level 1 before reaching level 0.
\end{remark}

\section{Main result: constructive version}
\label{sec:result-constructive}

For each $n\in\{0,1,\ldots\}$,
let $\bbbd_n:=\{k2^{-n}\st k\in\bbbz\}$
and define a sequence of stopping times $T^n_k$, $k=-1,0,1,2,\ldots$, inductively
by $T^n_{-1}:=0$,
\begin{align*}
  T^n_0(\omega)
  &:=
  \inf
  \left\{
    t\ge 0
    \st
    \omega(t)\in\bbbd_n
  \right\},\\
  T^n_k(\omega)
  &:=
  \inf
  \left\{
    t\ge T^n_{k-1}(\omega)
    \st
    \omega(t)\in\bbbd_n
    \And
    \omega(t)\ne\omega(T^n_{k-1})
  \right\},
  \enspace
  k=1,2,\ldots
\end{align*}
(as usual, $\inf\emptyset:=\infty$).
For each $t\in[0,\infty)$ and $\omega\in\Omega$,
define
\begin{equation}\label{eq:A}
  A^n_t(\omega)
  :=
  \sum_{k=0}^{\infty}
  \left(
    \omega(T^n_k\wedge t)
    -
    \omega(T^n_{k-1}\wedge t)
  \right)^2,
  \quad
  n=0,1,2,\ldots,
\end{equation}
(cf.\ (\ref{eq:strong-p-variation}) with $p=2$)
and set
\begin{equation}\label{eq:A-limit}
  \UpA_t(\omega)
  :=
  \limsup_{n\to\infty} A^n_t(\omega),
  \quad
  \LowA_t(\omega)
  :=
  \liminf_{n\to\infty} A^n_t(\omega).
\end{equation}
We will see later
(Theorem \ref{thm:main-constructive}(a))
that the event $(\forall t\in[0,\infty):\UpA_t=\LowA_t)$ is full
and that for typical $\omega$ the functions
$\UpA(\omega):t\in[0,\infty)\mapsto\UpA_t(\omega)$
and
$\LowA(\omega):t\in[0,\infty)\mapsto\LowA_t(\omega)$
are elements of $\Omega$
(in particular, they are finite).
But in general we can only say that $\UpA(\omega)$ and $\LowA(\omega)$
are positive increasing functions
(not necessarily strictly increasing)
that can even take value $\infty$.
For each $s\in[0,\infty)$,
define the stopping time
\begin{equation}\label{eq:stopping}
  \tau_s
  :=
  \inf
  \left\{
    t\ge0
    \st
    \UpA|_{[0,t)}=\LowA|_{[0,t)}\in C[0,t)
    \And
    \sup_{u<t}\UpA_u=\sup_{u<t}\LowA_u\ge s
  \right\}.
\end{equation}
(We will see in Section \ref{sec:2-variation},
Lemma \ref{lem:stopping},
that this is indeed a stopping time.\ifFULL\bluebegin\
  Is the part ``${}\in C[0,t)$'' optional?\blueend\fi)
It will be convenient to use the following convention:
an event stated in terms of $A_{\infty}$,
such as $A_{\infty}=\infty$,
happens if and only if $\UpA=\LowA\in\Omega$
and $A_{\infty}:=\UpA_{\infty}=\LowA_{\infty}$ satisfies the given condition.
\ifFULL\bluebegin
  Without the condition $\LowA|_{[0,t)}\in C[0,t)$,
  $\tau_s$ (defined as $\inf\{t:\UpA_t\ge s\}$)
  would be a stopping time w.r.\ to the filtration $\FFF_{t+}$
  but (perhaps) not necessarily w.r.\ to the original filtration $\FFF_t$
  (w.r.\ to $\FFF_t$,
  $\tau_s$ is only guaranteed to be an optional time).

  The definition $\tau_s:=\inf\{t:\UpA_t\ge s\}$ is simpler than ours,
  but it has two drawbacks:
  (a) lack of symmetry between $\UpA$ and $\LowA$;
  (b) it might not be a stopping time.
  These are two possible ways to overcome the second drawback:
  \begin{itemize}
  \item
    Allow $\tau_s$ to be an optional rather than stopping time
    (or extend $\FFF_t$ to $\FFF_{t+}$);
    coherence will be preserved
    (by \cite{Karatzas/Shreve:1991},
    Proposition 2.7.7 and Theorem 2.7.9).
  \item
    Augment each $\sigma$-algebra $\FFF_t$ with $\UpProb$-null sets.
    This will change the notion of a stopping time,
    and so lead to a tighter pair $(\LowProb,\UpProb)$
    that can be used in Theorem \ref{thm:main-constructive}.
    (Perhaps some analogue of Proposition 2.7.7 of \cite{Karatzas/Shreve:1991},
    as applied to Brownian motion,
    will hold.)
    A problem with this approach is that the same procedure can be applied again,
    and again, etc.
    Will it ever stabilize?
    (Such an infinite repetition is not needed for our results,
    but it is awkward,
    especially if there is no stabilization.)
  \end{itemize}
\blueend\fi

Let $P$ be a function
defined on the power set of $\Omega$
and taking values in $[0,1]$
(such as $\UpProb$ or $\LowProb$),
and let $f:\Omega\to\Psi$
be a mapping from $\Omega$ to another set $\Psi$.
The \emph{pushforward} $Pf^{-1}$ of $P$ by $f$
is the function on the power set of $\Psi$ defined by
\begin{equation*}
  Pf^{-1}(E)
  :=
  P(f^{-1}(E)),
  \quad
  E\subseteq\Psi.
\end{equation*}

An especially important mapping for this paper
is the \emph{normalizing time transformation}
$\ntt:\Omega\to\bbbr^{[0,\infty)}$
defined as follows:
for each $\omega\in\Omega$,
$\ntt(\omega)$ is the time-changed price path
$s\mapsto\omega(\tau_s)$, $s\in[0,\infty)$,
with $\omega(\infty)$ set to, e.g., $0$.
(We call it ``normalizing'' since our goal is to ensure
$\UpA_t(\ntt(\omega))=\LowA_t(\ntt(\omega))=t$ for all $t\ge0$
for typical $\omega$.)
For each $c\in\bbbr$, let
\begin{align}
  \UpQ_c
  &:=
  \UpProb(\CDOT;\omega(0)=c,A_{\infty}=\infty)\ntt^{-1}
  \label{eq:Qc-upper}\\
  \LowQ_c
  &:=
  \LowProb(\CDOT;\omega(0)=c,A_{\infty}=\infty)\ntt^{-1}
  \label{eq:Qc-lower}
\end{align}
(as before,
the commas stand for conjunction in this context)
be the pushforwards of the restricted upper and lower price
\begin{align*}
  E\subseteq\Omega&\mapsto\UpProb(E;\omega(0)=c,A_{\infty}=\infty)\\
  E\subseteq\Omega&\mapsto\LowProb(E;\omega(0)=c,A_{\infty}=\infty),
\end{align*}
respectively,
by the normalizing time transformation $\ntt$.

\ifFULL\bluebegin
  Similarly,
  for any positive constant $S$
  we define the restricted normalizing time transformation $\ntt_S$ as follows:
  for each $\omega\in\Omega$,
  $\ntt_S(\omega)\in C[0,S]$ is the restriction
  $s\in[0,S]\mapsto\omega(\tau_s)$ of $\ntt(\omega)$
  to the interval $[0,S]$.
  The corresponding pushforwards are
  \begin{align*}
    \UpQ_{c,S}&:=\UpProb(\CDOT;\omega(0)=c,\tau_S<\infty)\ntt_S^{-1},\\
    \LowQ_{c,S}&:=\LowProb(\CDOT;\omega(0)=c,\tau_S<\infty)\ntt_S^{-1}.
  \end{align*}
  They are functions on the power set of $C[0,S]$.
  \textbf{Remark:}
  It is important for Section~\ref{sec:applications-constructive}
  to have $\tau_S<\infty$ and not $A_{\infty}>S$.
\blueend\fi

As mentioned earlier,
we use restricted upper and lower price $\UpProb(E;B)$ and $\LowProb(E;B)$
only when $\UpProb(B)=1$.
In Section~\ref{sec:coherence}, (\ref{eq:one}), we will see
that indeed $\UpProb(\omega(0)=c,A_{\infty}=\infty)=1$.
\ifFULL\bluebegin
  (And so, \emph{a fortiori},
  $\UpProb(\omega(0)=c,\tau_S<\infty)=1$ for each positive constant $S$.)
\blueend\fi

The next theorem shows that the pushforwards of $\UpProb$ and $\LowProb$
we have just defined
are closely connected with the Wiener measure.
Remember that, for each $c\in\bbbr$,
$\Wiener_c$ is the probability measure on $(\Omega,\FFF)$
which is the pushforward of the Wiener measure $\Wiener_0$
by the mapping $\omega\in\Omega\mapsto\omega+c$
(i.e., $\Wiener_c$ is the distribution of Brownian motion
over the time period $[0,\infty)$ started from $c$).
\ifFULL\bluebegin
  For all $c\in\bbbr$ and $S>0$,
  let $\Wiener_{c,S}$ be the probability measure on $(C[0,S],\FFF){[0,S]})$
  which is the pushforward of the Wiener measure on $C[0,S]$
  by the mapping $\omega\in C[0,S]\mapsto\omega+c$
  (i.e., $\Wiener_{c,S}$ is the distribution of Brownian motion
  over time period $[0,S]$ started from $c$).
  Here $\FFF_{[0,S]}$ stands for the Borel $\sigma$-algebra on $C[0,S]$.
\blueend\fi

\begin{theorem}\label{thm:main-constructive}
  (a) For typical $\omega$,
  the function
  \begin{equation*}
    A(\omega):
    t\in[0,\infty)
    \mapsto
    A_t(\omega)
    :=
    \UpA_t(\omega)
    =
    \LowA_t(\omega)
  \end{equation*}
  exists,
  is an increasing element of $\Omega$ with $A_0(\omega)=0$,
  and has the same intervals of constancy as $\omega$.
  (b) For all $c\in\bbbr$,
  the restriction of both $\UpQ_c$ and $\LowQ_c$ to $\FFF$
  coincides with the measure $\Wiener_c$ on $\Omega$
  (in particular, $\LowQ_c(\Omega)=1$).
  \ifFULL\bluebegin
    (c) For any $c\in\bbbr$ and $S>0$,
    the restriction of both $\UpQ_{c,S}$ and $\LowQ_{c,S}$
    to $\FFF_{[0,S]}$
    coincides with the measure $\Wiener_{c,S}$ on $C[0,S]$.
  \blueend\fi
\end{theorem}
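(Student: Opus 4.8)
This is precisely the existence of (level-crossing) quadratic variation for typical price paths, which I would establish in Section~\ref{sec:2-variation}. Since $A^n_t(\omega)$ equals $2^{-2n}$ times the number of $\pm2^{-n}$ steps of $\omega$ along $\bbbd_n$ before time~$t$, plus a boundary term of order $4^{-n}$, the increments of $A^{n+1}_\cdot-A^n_\cdot$ are, after the obvious compensation, of martingale type; I would exhibit a positive capital process that becomes infinite on the $\omega$ for which $\sum_n\bigl\lvert A^{n+1}_t-A^n_t\bigr\rvert$ fails to be finite uniformly on compacts (the game-theoretic form of the $L^2$-Cauchy property of dyadic quadratic variation), forcing $\UpA=\LowA=:A$ for typical $\omega$, with $A$ continuous and $A_0=0$. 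That $A$ is constant exactly on the intervals of constancy of $\omega$: the direction ``$\omega$ constant $\Rightarrow A$ constant'' is immediate (no level crossings), and the converse follows from the variation-index result, Corollary~\ref{cor:vi} (or~\ref{cor:Taylor}) --- on a non-degenerate interval $\omega$ has variation index $2$, so its level-crossing quadratic variation there is positive.

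\textbf{Part~(b): reduction and the easy bound.} By applying capital processes to $\omega-\omega(0)$ we may take $c=0$. Write $B:=\{\omega(0)=c\}\cap\{A_\infty=\infty\}$. The easy half of Theorem~\ref{thm:main} --- a positive capital process, applied to $\omega$ regarded as Brownian motion, is a $\Wiener_c$-supermartingale (Section~\ref{sec:proof-main-ge}) --- together with the classical fact that $A_t(\omega)=t$ for all $t$, $\Wiener_c$-almost surely, gives $\UpProb(B)\ge\Wiener_c(B)=1$; so the restricted contents are well defined, and since $\tc^{-1}(E)^c$ and $\tc^{-1}(E^c)$ differ only on the set $\{\omega\in B\st\tc(\omega)\notin\Omega\}$, which is null by part~(a), it suffices to prove $\UpProb(\tc^{-1}(E)\cap B)=\Wiener_c(E)$ for every $E\in\FFF$; the statements $\LowQ_c(E)=1-\Wiener_c(E^c)=\Wiener_c(E)$ and $\LowQ_c(\Omega)=1$ then follow. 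The inequality ``$\ge$'' is again immediate from the easy half of Theorem~\ref{thm:main}: $\UpProb(\tc^{-1}(E)\cap B)\ge\Wiener_c(\tc^{-1}(E)\cap B)$, and $\Wiener_c$-a.s.\ we have $A_t(\omega)=t$, hence $\tau_s(\omega)=s$ and $\tc(\omega)=\omega$, so $\Wiener_c(\tc^{-1}(E)\cap B)=\Wiener_c(E\cap B)=\Wiener_c(E)$.

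\textbf{Part~(b): the hard bound.} What is left is $\UpProb(\tc^{-1}(E)\cap B)\le\Wiener_c(E)$, i.e.\ the hedgeability of $\III_E\circ\tc$ on $B$ from initial capital $\Wiener_c(E)+\epsilon$; here I would use the apparatus of Sections~\ref{sec:expectation} and~\ref{sec:tight}, in three stages. (i) \emph{Exponential capital processes in the normalizing clock.} Via high-frequency limit order strategies over a grid of stopping times adapted to the growth of $A$ (such as $\tau_{j\delta}$, $j=0,1,\dots$), I would build positive capital processes $\prod_j\bigl(1+\lambda(\omega_{t\wedge\tau_{(j+1)\delta}}-\omega_{t\wedge\tau_{j\delta}})\bigr)$ and show that as $\delta\downarrow0$ they converge to $\exp\bigl(\lambda(\omega_t-\omega_0)-\tfrac{\lambda^2}{2}A_t\bigr)$, yielding $\UpExpect\bigl[e^{\lambda(\omega(\tau_t)-\omega(\tau_s))}\bigm|\FFF_{\tau_s}\bigr]\le e^{\lambda^2(t-s)/2}$ (upper expectation in the sense of Section~\ref{sec:expectation}) for every real $\lambda$, and the matching lower bounds; by Stone--Weierstrass this pins the conditional upper and lower expectations of every bounded continuous function of $\omega(\tau_t)-\omega(\tau_s)$ at their $N(0,t-s)$ values, and iterating over $0\le t_1<\dots<t_k$ gives $\UpExpect[g(\omega(\tau_{t_1}),\dots,\omega(\tau_{t_k}))]\le\int g\,d\Wiener_c$ for bounded continuous $g$. (ii) \emph{Tightness.} A maximal inequality from the same exponential processes controls the modulus of continuity of $\tc(\omega)$ on $B$ --- a small clock increment $t-s$ forces a small oscillation of $\omega$ over $[\tau_s,\tau_t]$ --- so there are compacts $K_m\subseteq\Omega$ with $\UpProb(\{\omega\in B\st\tc(\omega)\notin K_m\})\to0$. (iii) \emph{Borel sets.} Approximating a bounded continuous $g$ on $\Omega$ uniformly on each $K_m$ by a function of finitely many coordinates gives $\UpExpect[g\circ\tc;B]\le\int g\,d\Wiener_c$; taking $g_n\downarrow\III_E$ for closed $E$, and then using tightness and the regularity of $\Wiener_c$, extends this to $\UpProb(\tc^{-1}(E)\cap B)\le\Wiener_c(E)$ for closed, and hence for arbitrary, $E\in\FFF$.

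\textbf{The main obstacle.} The hard part will be stage~(i): the game-theoretic realization of the Dubins--Schwarz time change, i.e.\ turning $\exp(\lambda(\omega_t-\omega_0)-\tfrac{\lambda^2}{2}A_t)$ into a bona fide positive (super)capital process. The discrete products $\prod_j(1+\lambda(\cdots))$ stay positive only while no increment is too large, so one must stop the strategy at the first large increment and show that this truncation can be made arbitrarily cheap, and then control the $\delta\downarrow0$ convergence to the exponential together with its error terms; this, and the accompanying tightness estimates, are exactly what Sections~\ref{sec:expectation}--\ref{sec:tight} are for. Everything else --- the iterated conditioning, the tightness bookkeeping, and the passage to Borel $E$ --- is routine, and part~(a) is needed only to guarantee that $\tc(\omega)\in\Omega$ for typical $\omega$, so that off a null set $\tc$ may be treated as a measurable map into $\Omega$.
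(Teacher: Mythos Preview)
Your appeal to Corollary~\ref{cor:vi} (or~\ref{cor:Taylor}) for the direction ``$A$ constant on $[a,b]$ $\Rightarrow$ $\omega$ constant on $[a,b]$'' is circular. Those corollaries are deduced from Theorem~\ref{thm:main}, and the proof of the non-trivial inequality in Theorem~\ref{thm:main} (Section~\ref{sec:proof-main-le}) uses Theorem~\ref{thm:main-constructive}(a) in full, including the intervals-of-constancy clause. Even setting aside the circularity, the implication ``$\vi^{[a,b]}(\omega)=2$ $\Rightarrow$ $A_b(\omega)-A_a(\omega)>0$'' is not immediate: the variation index controls the $p$-variation for $p\ne 2$, not the level-crossing quadratic variation $A$. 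The paper instead gives a direct capital-process argument (Lemma~\ref{lem:same-intervals}): if $\omega$ oscillates by $c>0$ over $[a,b]$ while $A$ stays constant, the simple capital process $(\omega(t)-\omega(a))^2-A^{l,a}_t$ (started at $a$, stopped appropriately) turns a capital of order $2^{-m}$ into $c^2$, giving arbitrarily small outer content to this event.

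\textbf{Part~(b), hard direction: a genuinely different route.} Your plan---build the exponential capital processes $\exp(\lambda(\omega_t-\omega_0)-\tfrac{\lambda^2}{2}A_t)$, identify the one-dimensional marginals of $\tc(\omega)$ via moment generating functions and Stone--Weierstrass, then iterate conditionally---is not what the paper does. The paper instead runs a Lindeberg-type argument: after reducing (via Vitali--Carath\'eodory, monotone approximation, and tightness) to a smooth compactly supported $U(\psi(S/N),\ldots,\psi(S))$, it hedges $U$ directly by solving the heat equation backward (equations~(\ref{eq:overline-U})--(\ref{eq:heat})) and betting the $x$-derivative of the solution between the stopping times $\tau_{iS/N+jS/LN}$; the heat equation makes the first two Taylor terms in (\ref{eq:5}) a capital increment, and the remaining terms are controlled by the modulus-of-continuity and sum-of-squares bounds of Lemmas~\ref{lem:super-modulus-P} and~\ref{lem:super-2-variation}. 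Your approach could be made to work, but two points deserve more care than you indicate: (i)~Stone--Weierstrass produces linear combinations $\sum c_j e^{\lambda_j x}$ with mixed signs, and $\UpExpect$ is only subadditive, so you must first show $\UpExpect=\LowExpect$ for each exponential (both inequalities, not just the upper one) before you can pass to linear combinations; (ii)~your passage ``closed $E$ $\Rightarrow$ arbitrary $E\in\FFF$'' needs, in effect, the same ingredients as the paper's reductions (c)--(d): outer regularity of $\Wiener_c$ to reduce to open $U$, then write $\III_U$ as a countable sum of nonnegative continuous pieces and use countable subadditivity of $\UpExpect$. The paper's heat-equation route avoids the moment-generating-function detour entirely and handles the finite-dimensional hedging in one stroke.
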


\begin{remark}\label{rem:volatility}
  The value $A_t(\omega)$ can be interpreted as the total volatility
  of the price path $\omega$ over the time period $[0,t]$.
  Theorem~\ref{thm:main-constructive}(b) implies that typical $\omega$
  satisfying $A_{\infty}(\omega)=\infty$ are unbounded
  (in particular, divergent).
  If $A_{\infty}(\omega)<\infty$,
  the total volatility $A_{t+1}(\omega)-A_t(\omega)$ of $\omega$ over $(t,t+1]$
  tends to $0$ as $t\to\infty$,
  and so the volatility of $\omega$ can be said to die away.
\end{remark}

\begin{remark}
  Theorem~\ref{thm:main-constructive} will continue to hold
  if the restriction ``${};\omega(0)=c,A_\infty=\infty)$''
  in the definitions (\ref{eq:Qc-upper}) and (\ref{eq:Qc-lower})
  is replaced by ``${};\omega(0)=c,\omega\text{ is unbounded})$''
  (in analogy with \cite{Dubins/Schwarz:1965}).
\end{remark}

\begin{remark}
  Theorem~\ref{thm:main-constructive} depends on the arbitrary choice
  ($\bbbd_n$)
  of the sequence of grids
  to define the quadratic variation process $A_t$.
  To make this less arbitrary,
  we could consider all grids whose mesh tends to zero fast enough
  and which are definable in the standard language of set theory
  (similarly to Wald's \cite{Wald:1937} suggested requirement
  for von Mises's collectives).
  When quadratic variation is defined via partitions of the time (horizontal) axis
  (as in L\'evy's paper \cite{Levy:1940}),
  Dudley \cite{Dudley:1973} shows that the rate of convergence $o(1/\log n)$ of the mesh to zero
  is sufficient for Brownian motion,
  and de la Vega \cite{deLaVega:1974} shows that this rate is slowest possible.
  It is an open question what the optimal rate of convergence is
  when quadratic variation is defined, as in this paper, via partitions of the vertical axis.
\end{remark}

\begin{remark}
  In this paper we construct quadratic variation $A$
  and define the stopping times $\tau_s$ in terms of $A$.
  Dubins and Schwarz \cite{Dubins/Schwarz:1965}
  construct $\tau_s$ directly
  (in a very similar way to our construction of $A$).
  An advantage of our construction
  (the game-theoretic counterpart of that in \cite{Karandikar:1983})
  is that the function $A(\omega)$ is continuous for typical $\omega$,
  whereas the event that the function $s\mapsto\tau_s(\omega)$ is continuous
  has lower price zero
  (Dubins and Schwarz's extra assumptions
  make this function continuous for almost all $\omega$).
\end{remark}

\begin{remark}\label{rem:coincidence}
  Theorem~\ref{thm:main} implies that the two notions of quadratic variation
  that we have discussed so far,
  $\qvar_t(\omega)$ defined by (\ref{eq:qvar}) and $A_t(\omega)$,
  coincide for all $t$ for typical $\omega$.
  Indeed, since $\qvar_t=A_t=t$, $\forall t\in[0,\infty)$,
  holds almost surely in the case of Brownian motion
  (see Lemma~\ref{lem:A-BM} for $A_t=t$),
  it suffices to check that the complement of the event
  $\forall t\in[0,\infty):\qvar_t=A_t$ belongs to $\KKK$.
  This follows from Lemma~\ref{lem:inv}
  and the analogous statement for $A$:
  if $\qvar_t(\omega)=A_t(\omega)$ for all $t$,
  we also have
  \begin{equation*}
    \qvar_{t}(\omega\circ f)
    =
    \qvar_{f(t)}(\omega)
    =
    A_{f(t)}(\omega)
    =
    A_{t}(\omega\circ f)
  \end{equation*}
  for all $t$.
\end{remark}

\section{Functional generalizations}
\label{sec:generalizations}

Theorems \ref{thm:main} and \ref{thm:main-constructive}(b)
are about upper price for sets,
but the former and part of the latter
can be generalized to cover the following more general notion of upper price
for \emph{functionals}, i.e., real-valued functions on $\Omega$.
The \emph{upper price} of a positive functional $F$
\emph{restricted to} a set $B\subseteq\Omega$
is defined by
\begin{equation}\label{eq:upper-expectation}
  \UpExpect(F;B)
  :=
  \inf
  \bigl\{
    \mathfrak{S}_0
    \bigm|
    \forall\omega\in B:
    \liminf_{t\to\infty}
    \mathfrak{S}_t(\omega)
    \ge
    F(\omega)
  \bigr\},
\end{equation}
where $\mathfrak{S}$ ranges over the positive capital processes.
This is the price of the cheapest positive superhedge for $F$
when Reality is restricted to choosing $\omega\in B$.
Restricted upper price for functionals
generalizes restricted upper price for sets:
$\UpProb(E;B)=\UpExpect(\III_E;B)$ for all $E\subseteq\Omega$.
When $B=\Omega$,
we abbreviate $\UpExpect(F;B)$ to $\UpExpect(F)$
and refer to $\UpExpect(F)$ as the \emph{upper price} of $F$.
Notice that $\UpExpect(F;B)=\UpExpect(F\III_B)$.

\ifFULL\bluebegin
  There are severe difficulties in defining restricted upper and lower price
  for bounded (but not necessarily positive) functionals $F$.
  One could define a \emph{capital process} as a process of the form $\mathfrak{S}-C$,
  where $\mathfrak{S}$ is a positive capital process and $C$ is a real constant.
  Then $\UpExpect(F;B)$ could be defined as we did above for positive processes,
  but with the requirement that $\mathfrak{S}$ be positive dropped.
  Unfortunately, under the old definition I cannot prove
  \begin{equation*}
    \UpExpect(F+C;B) = \UpExpect(F;B) + C,
  \end{equation*}
  where $C$ is a strictly positive constant
  (this equality is easy to prove for $B=\Omega$:
  just use Lemma~\ref{lem:coherence};
  it is also easy to prove when $P(B)=1$ for some martingale measure $P$).
  This equality would allow us to show
  that the $\mathfrak{S}$ in the new definition of $\UpExpect(F;B)$
  can be chosen ${}\ge\inf F$.
  (Without this, the new definition might be different from the old definition
  for positive functionals.)
  Requiring $\mathfrak{S}\ge\inf F$ is also awkward:
  it makes $\UpExpect(F+G)\le\UpExpect(F)+\UpExpect(G)$
  difficult to prove or even wrong.
\blueend\fi
\ifFULL\bluebegin
  For the following definition to work
  we would have to drop the assumption that $F$ is positive.
  The \emph{lower price} of a bounded functional $F:\Omega\to\bbbr$
  \emph{restricted to} $B\subseteq\Omega$ with $\UpProb(B)=1$
  is defined as
  \begin{equation*}
    \LowExpect(F;B)
    :=
    -\UpExpect(-F;B);
  \end{equation*}
  as usual, we may omit $B$ when $B=\Omega$.
\blueend\fi

Let us say that a positive functional $F:\Omega\to[0,\infty)$ is \emph{$\KKK$-measurable}
if, for each constant $c\in[0,\infty)$,
the set $\{\omega\st F(\omega)\ge c\}$ is in $\KKK$.
(We need to spell out this definition since $\KKK$ is not a $\sigma$-algebra:
cf.\ Remark~\ref{rem:intersection}.)
Notice that the $\KKK$-measurability of $F$ means that $F$ is $\FFF$-measurable and,
for each $\omega\in\Omega$ and each time transformation $f$,
\begin{equation}\label{eq:invariant-function}
  F(\omega^f)
  \le
  F(\omega)
\end{equation}
(cf.\ (\ref{eq:invariant})).

\begin{remark}
  The presence of $\le$ in (\ref{eq:invariant-function}) is natural
  as, intuitively, transforming $\omega$ into $\omega^f$
  may involve cutting off part of $\omega$
  (step (a) at the beginning of Remark~\ref{rem:time-superinvariance}).
  It is clear that $F(\omega^f)=F(\omega)$
  when $f\in\GGG$.
\end{remark}

\begin{remark} 
  In terms of the partial order defined in Remark~\ref{rem:monotonicity},
  we can say that a functional $F$ is $\KKK$-measurable
  if and only if it is $\FFF$-measurable and monotonic.
\end{remark}

In this paper we will in fact prove the following generalization
of Theorem~\ref{thm:main}.
\begin{theorem}\label{thm:supermain}
  Let $c\in\bbbr$.
  Each positive $\KKK$-measurable functional $F:\Omega\to[0,\infty)$
  satisfies
  \begin{equation}\label{eq:supermain}
    \UpExpect(F;\omega(0)=c)
    =
    \int F \dd\Wiener_c.
  \end{equation}
\end{theorem}

The proof of the inequality $\ge$ in (\ref{eq:supermain}) is easy
and is accomplished by the following lemma;
it suffices to apply it to $\Wiener_c$ in place of $P$
and to $F\III_{\{\omega(0)=c\}}$ in place of $F$.

\begin{lemma}\label{lem:main-ge}
  Let $P$ be a probability measure on $(\Omega,\FFF)$
  such that the process $X_t(\omega):=\omega(t)$
  is a martingale w.r.\ to $P$ and the filtration $(\FFF_t)$.
  Then $\int F \dd P\le\UpExpect(F)$
  for any positive $\FFF$-measurable functional $F$.
\end{lemma}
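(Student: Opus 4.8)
The statement to prove is Lemma~\ref{lem:main-ge}: if $P$ is a probability measure on $(\Omega,\FFF)$ under which $X_t(\omega)=\omega(t)$ is a martingale w.r.\ to $(\FFF_t)$, then $P(E)\le\UpProb(E)$ for every event $E$. The plan is to show that every positive capital process $\mathfrak{S}$ is a nonnegative supermartingale under $P$, and then invoke the maximal/optional-stopping inequality for nonnegative supermartingales. Given such an $\mathfrak{S}$ with $\liminf_{t\to\infty}\mathfrak{S}_t(\omega)\ge\III_E(\omega)$ for all $\omega$, Fatou's lemma applied along the supermartingale gives
\begin{equation*}
  P(E)
  =
  \Expect_P[\III_E]
  \le
  \Expect_P\Bigl[\liminf_{t\to\infty}\mathfrak{S}_t\Bigr]
  \le
  \liminf_{t\to\infty}\Expect_P[\mathfrak{S}_t]
  \le
  \mathfrak{S}_0,
\end{equation*}
and taking the infimum over all such $\mathfrak{S}$ yields $P(E)\le\UpProb(E)$.

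\textbf{Reducing to the supermartingale property.} The core work is to verify that a positive capital process is a $P$-supermartingale. By the representation~(\ref{eq:positive-capital}), $\mathfrak{S}=\sum_n\K^{G_n,c_n}$ is a sum (with convergent initial values $\sum_n c_n$) of \emph{positive} simple capital processes, so by monotone convergence it suffices to treat a single positive simple capital process $\K^{G,c}$. For a simple strategy $G$ given by stopping times $\tau_1\le\tau_2\le\cdots$ with $\tau_n\to\infty$ and bounded $\FFF_{\tau_n}$-measurable bets $h_n$, the increments in~(\ref{eq:simple-capital}) are, on each interval $(\tau_n\wedge t,\tau_{n+1}\wedge t]$, of the form $h_n\bigl(X_{\tau_{n+1}\wedge t}-X_{\tau_n\wedge t}\bigr)$. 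Since $X$ is a $P$-martingale and $h_n$ is bounded and $\FFF_{\tau_n}$-measurable, the optional sampling theorem gives that each such term is a martingale increment; hence $\K^{G,c}$ is a local martingale, and being nonnegative (by assumption), it is a supermartingale by Fatou. One should be a little careful that $X$ need not be uniformly integrable or $L^1$-bounded globally, but because each $\tau_n$ is finite and $\tau_n\wedge t$ is a bounded stopping time, optional sampling for martingales at bounded times applies directly to each finite partial sum $\sum_{n=1}^{N}h_n(X_{\tau_{n+1}\wedge t}-X_{\tau_n\wedge t})$; letting $N\to\infty$ (the sum is eventually constant in $N$ for fixed $t$ since $\tau_n\to\infty$) recovers $\K^{G,c}_t-c$.

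\textbf{The main obstacle.} The measure-theoretic bookkeeping is the delicate part: justifying the interchange of the infinite sum over $n$ (both the sum over simple strategies in~(\ref{eq:positive-capital}) and the sum over bets within one simple strategy) with the conditional expectation, and handling the passage $t\to\infty$ with only $\liminf_{t\to\infty}\mathfrak{S}_t\ge\III_E$ available rather than convergence. Both are resolved by nonnegativity: monotone convergence handles the sums, and Fatou's lemma handles the limit in $t$ (note that, as remarked after~(\ref{eq:upper-probability}), one may assume $\mathfrak{S}$ is stopped once it reaches a level close to $1$, so $\sup_t\mathfrak{S}_t$ is essentially bounded and the limit behaves well, though even without this Fatou suffices). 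A further subtlety is whether $\mathfrak{S}_0=c$ is genuinely the supermartingale's starting value and an upper bound for $\Expect_P[\mathfrak{S}_t]$ — this is immediate since $\Expect_P[\mathfrak{S}_t]\le\mathfrak{S}_0$ for a nonnegative supermartingale. Once the supermartingale property is in hand, the displayed chain of inequalities closes the proof.
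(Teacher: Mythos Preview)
Your proposal is correct and follows essentially the same approach as the paper: show that each simple capital process is a local martingale under $P$ via optional sampling (using the boundedness of the bets), hence a positive supermartingale, and then combine with Fatou's lemma to bound $P(E)$ by $\mathfrak{S}_0$. The only cosmetic difference is that you pass to the full sum $\mathfrak{S}=\sum_n\K^{G_n,c_n}$ via monotone convergence and apply Fatou to $\Expect_P[\mathfrak{S}_t]$ directly, whereas the paper works with finite partial sums $\sum_{n=1}^{N_t}\K^{G_n,c_n}_t$ and applies the maximal inequality for positive supermartingales; both routes give the same bound.
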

\begin{proof}
  Fix a positive $\FFF$-measurable functional $F$ and let $\epsilon>0$.
  Find a positive capital process $\mathfrak{S}$
  of the form (\ref{eq:positive-capital})
  such that $\mathfrak{S}_0<\UpExpect(F)+\epsilon$
  and $\liminf_{t\to\infty}\mathfrak{S}_t(\omega)\ge F(\omega)$
  for all $\omega\in\Omega$.
  It can be checked using the optional sampling theorem
  (it is here that the boundedness of Sceptic's bets is used)
  that each addend in (\ref{eq:simple-capital}) is a martingale,
  and so each partial sum in (\ref{eq:simple-capital}) is a martingale
  and (\ref{eq:simple-capital}) itself is a local martingale.
  \ifFULL\bluebegin
    For details,
    see the hidden part of Lemma~4 in \cite{\CTI}.
  \blueend\fi
  Since each addend in (\ref{eq:positive-capital}) is a positive local martingale,
  it is a supermartingale\ifFULL\bluebegin\
    (see, e.g., \cite{Revuz/Yor:1999}, p.~123, (v),
    or \cite{Medvegyev:2007}, Proposition 1.141\blueend\fi.
  By the monotone convergence theorem,
  the sum (\ref{eq:positive-capital}) of positive supermartingales
  is itself a positive supermartingale:
  if $0\le s<t$,
  \begin{multline*}
    \Expect
    \left(
      \mathfrak{S}_t \givn \FFF_s
    \right)
    =
    \Expect
    \left(
      \sum_{n=1}^{\infty}
      \K_t^{G_n,c_n}
      \givn
      \FFF_s
    \right)\\
    =
    \sum_{n=1}^{\infty}
    \Expect
    \left(
      \K_t^{G_n,c_n}
      \givn
      \FFF_s
    \right)
    \le
    \sum_{n=1}^{\infty}
    \K_s^{G_n,c_n}
    =
    \mathfrak{S}_s,
  \end{multline*}
  $\Expect(\cdot\givn\FFF_s)$ standing for the conditional expectation
  w.r.\ to the probability measure $P$.
  (The positive supermartingale $\mathfrak{S}$ is somewhat unusual
  in that it is not guaranteed to be right-continuous;
  however, it is lower semicontinuous
  as the limit of an increasing sequence of continuous processes.)
  Using Fatou's lemma,
  we now obtain
  \begin{equation}\label{eq:maximal}
    \int F \dd P
    \le
    \int
    \liminf_{t\to\infty}
    \mathfrak{S}_t
    \dd P
    \le
    \liminf_{t\to\infty}
    \int
    \mathfrak{S}_t
    \dd P
    \le
    \mathfrak{S}_0
    <
    \UpExpect(F)
    +
    \epsilon,
  \end{equation}
  where $t$ can be assumed to take only integer values.
  Since $\epsilon$ can be arbitrarily small,
  this implies the statement of the lemma.
\end{proof}

We will deduce the inequality $\le$ in Theorem~\ref{thm:supermain}
from the following generalization of the part of Theorem~\ref{thm:main-constructive}(b)
concerning $\UpQ_c$.

\begin{theorem}\label{thm:supermain-constructive}
  For any $c\in\bbbr$
  and any positive $\FFF$-measurable functional $F:\Omega\to[0,\infty)$,
  \begin{equation}\label{eq:inequality}
    \UpExpect(F\circ\ntt;\omega(0)=c,A_{\infty}=\infty)
    =
    \int_{\Omega}
    F
    \dd\Wiener_c
  \end{equation}
  (with $\circ$ standing for composition of two functions
  and with the convention that $(F\circ\ntt)(\omega):=0$
  when $\omega\notin\ntt^{-1}(\Omega)$).
\end{theorem}

\noindent
We will check that Theorem~\ref{thm:supermain-constructive}
(namely, the inequality $\le$ in (\ref{eq:inequality}))
indeed implies Theorem~\ref{thm:main-constructive}(b)
in Section~\ref{sec:proof-constructive-b}.
In this section we will only prove the easy inequality $\ge$ in (\ref{eq:inequality}).
In Section~\ref{sec:2-variation} (Lemma~\ref{lem:A-BM})
we will see that $\UpA_t(\omega)=\LowA_t(\omega)=t$ for all $t\in[0,\infty)$
for $\Wiener_c$-almost all $\omega$;
therefore,
$\ntt(\omega)=\omega$ for $\Wiener_c$-almost all $\omega$.
In conjunction with Lemma~\ref{lem:main-ge},
this implies the inequality $\ge$ in (\ref{eq:inequality}):
\begin{multline*}
  \UpExpect(F\circ\ntt;\omega(0)=c,A_{\infty}=\infty)
  =
  \UpExpect((F\circ\ntt)\III_{\{\omega(0)=c,A_{\infty}=\infty\}})\\
  \ge
  \int_{\Omega}
  (F\circ\ntt)\III_{\{\omega(0)=c,A_{\infty}=\infty\}}
  \dd\Wiener_c
  =
  \int_{\Omega}
  F
  \dd\Wiener_c.
\end{multline*}

\begin{remark}
  Theorem~\ref{thm:supermain} gives the price
  of the cheapest superhedge for the contingent claim $F$,
  but it is not applicable to the standard contingent claims traded in financial markets,
  which are not $\KKK$-measurable.
  This theorem would be applicable to the imaginary contingent claim
  paying $f(\omega(\tau_S))$ at time $\tau_S$
  (cf.\ (\ref{eq:stopping}); there is no payment if $\tau_S=\infty$),
  where $S>0$ is a given constant
  and $f$ is a given positive and measurable payoff function.
  (If the interest rate $r$ is constant but different from 0,
  we can consider the contingent claim paying $e^{\tau_S r}f(\omega(\tau_S))$
  at time $\tau_S$.)
  The price of the cheapest superhedge
  will be $\int f(\psi(S))\Wiener_c(\dd\psi)$,
  where $c:=\omega(0)$,
  if there are no restrictions on $\omega\in\Omega$,
  but will become
  $\int f(\psi(S))\III_{\{\forall s\in[0,S]:\psi(s)\ge 0\}}\Wiener_c(\dd\psi)$
  if $\omega$ is restricted to be positive (as in many real financial markets).
  \ifWP
    Similar contingent claims were considered by Bick \cite{Bick:1995}
    and later marketed by Soci\'et\'e G\'en\'erale Corporate and Investment Banking
    under the name of timer options,
    but in timer options $\tau_S$ is replaced by the moment
    when the realised variance exceeds the \emph{a priori} chosen bound $S$.
    The methods of this paper can be used to price timer options:
    see, e.g., \cite{Beiglbock/etal:2015}.
  \fi
\end{remark}

The following lemma reduces Theorems~\ref{thm:supermain}
and~\ref{thm:supermain-constructive}
to the case of bounded~$F$.

\begin{lemma}
  Without loss of generality,
  we can assume that the functional $F$ in (\ref{eq:supermain}) and (\ref{eq:inequality})
  is bounded.
\end{lemma}

\begin{proof}
  The inequalities $\ge$ in (\ref{eq:supermain}) and (\ref{eq:inequality})
  have already been proved, so we will concentrate on the inequalities $\le$.
  We will only consider the case of (\ref{eq:supermain});
  the case of (\ref{eq:inequality}) is analogous.

  Suppose (\ref{eq:supermain}) holds
  for all bounded positive $\KKK$-measurable functionals $F:\Omega\to[0,\infty)$,
  and let $F:\Omega\to[0,\infty)$ be an unbounded positive $\KKK$-measurable functional.
  Represent $F$ as the sum of bounded positive $\KKK$-measurable functionals
  $F_n:\Omega\to[0,\infty)$, $n\in\{0,1,\ldots\}$,
  defined by $F_n:=0\vee(F-n)\wedge1$:
  $F=\sum_{n=0}^{\infty}F_n$.
  To check that $F_n$ are indeed $\KKK$-measurable,
  notice that, since $0\vee(u-n)\wedge1$ is an increasing function of $u$,
  (\ref{eq:invariant-function}) will continue to hold if we replace $F$ by $F_n$.
  Now we can apply (\ref{eq:supermain}) to $F_n$:
  \begin{multline*}
    \UpExpect(F;\omega(0)=c)
    =
    \UpExpect
    \left(
      \sum_{n=0}^{\infty}
      F_n
      ;\omega(0)=c
    \right)\\
    \le
    \sum_{n=0}^{\infty}
    \UpExpect
    \left(
      F_n
      ;\omega(0)=c
    \right)
    \le
    \sum_{n=0}^{\infty}
    \int F_n \dd\Wiener_c
    =
    \int F \dd\Wiener_c.
    \qedhere
  \end{multline*}
\end{proof}

Sections \ref{sec:coherence}--\ref{sec:proof-main-le}
are mainly devoted to the proof
of the remaining statements in Theorems \ref{thm:main-constructive},
\ref{thm:supermain}, and \ref{thm:supermain-constructive}
(the last two for bounded $F$),
whereas Section~\ref{sec:literature} is devoted to the discussion
of the financial meaning of our results
and their connections with related probabilistic and financial literature.
\ifFULL\bluebegin
  The general scheme of the proof will mainly follow the proof
  of Theorem~2 in \cite{\CTIII}
  (although the steps are often implemented differently).
\blueend\fi


\section{Coherence}
\label{sec:coherence}

The following trivial result says that our trading game is \emph{coherent},
in the sense that $\UpProb(\Omega)=1$
(i.e.,
no positive capital process increases its value between time $0$ and $\infty$
by more than a strictly positive constant for all $\omega\in\Omega$).
\begin{lemma}\label{lem:coherence}
  $\UpProb(\Omega)=1$.
  Moreover, for each $c\in\bbbr$,
  $\UpProb(\omega(0)=c)=1$.
\end{lemma}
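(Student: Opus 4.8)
The plan is to establish the two inequalities $\UpProb(\Omega)\le1$ and $\UpProb(\Omega)\ge1$ separately, and then to observe that exactly the same reasoning yields the refined statement $\UpProb(\omega(0)=c)=1$.

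For the upper bound I would simply exhibit a witnessing positive capital process: take the simple trading strategy with all bets $h_n\equiv0$ (and, say, $\tau_n\equiv n$) together with initial capital $1$. Its simple capital process is identically $1$, so it is trivially a positive capital process, it satisfies $\liminf_{t\to\infty}\mathfrak{S}_t(\omega)=1\ge\III_\Omega(\omega)$ for every $\omega$, and it has $\mathfrak{S}_0=1$. The same process also certifies $\UpProb(\omega(0)=c)\le1$, since $\III_{\{\omega(0)=c\}}\le1$ pointwise.

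For the lower bound --- the genuine ``coherence'' content --- the key observation is that along a \emph{constant} price path every simple capital process is frozen at its initial value. Indeed, if $\omega\equiv a$ for some $a\in\bbbr$, then $\omega(\tau_{n+1}\wedge t)-\omega(\tau_n\wedge t)=0$ for all $n$ and all $t$, so $\K^{G,c}_t(\omega)=c$ for all $t$ by (\ref{eq:simple-capital}); consequently, for a positive capital process $\mathfrak{S}$ of the form (\ref{eq:positive-capital}) one gets $\mathfrak{S}_t(\omega)=\sum_n c_n=\mathfrak{S}_0$ for all $t$. Now let $\mathfrak{S}$ be any positive capital process admissible in the definition of $\UpProb(\Omega)$, i.e.\ with $\liminf_{t\to\infty}\mathfrak{S}_t(\omega)\ge\III_\Omega(\omega)=1$ for every $\omega$. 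Applying the previous remark to the constant path $\omega\equiv0$ gives $\mathfrak{S}_0=\liminf_{t\to\infty}\mathfrak{S}_t(\omega)\ge1$; taking the infimum over all such $\mathfrak{S}$ yields $\UpProb(\Omega)\ge1$. For the second assertion one repeats this with the constant path $\omega\equiv c$, which belongs to the event $\{\omega(0)=c\}$, so the admissibility constraint forces $\mathfrak{S}_0\ge\III_{\{\omega(0)=c\}}(\omega)=1$, whence $\UpProb(\omega(0)=c)\ge1$.

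This proof has essentially no obstacle; the only point deserving care is unwinding the definitions (\ref{eq:simple-capital})--(\ref{eq:positive-capital}) to confirm that a positive capital process really is constant along a constant path --- in particular that the assumed convergence of the series $\sum_n c_n$ legitimises passing from $\sum_n\K^{G_n,c_n}_t(\omega)$ to $\sum_n c_n$ termwise. Once that bookkeeping is noted, both equalities drop out immediately.
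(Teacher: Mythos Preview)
Your proof is correct and follows the same idea as the paper: the paper's one-line argument (``No positive capital process can strictly increase its value on a constant $\omega\in\Omega$'') is precisely your lower-bound observation that on a constant path every simple capital process, and hence every positive capital process, stays at its initial value. You have simply unpacked the details (including the trivial upper bound via the identically-$1$ process) that the paper leaves implicit.
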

\begin{proof}
  No positive capital process can strictly increase its value
  on a constant $\omega\in\Omega$.
\end{proof}

Lemma~\ref{lem:coherence}, however, does not even guarantee
that the set of non-constant elements of $\Omega$ has upper price one.
The theory of measure-theoretic probability provides us
with a plethora of non-trivial events of upper price one.
\begin{lemma}\label{lem:super-coherence}
  Let $E$ be an event that almost surely contains the sample path
  of a continuous martingale with time interval $[0,\infty)$.
  Then $\UpProb(E)=1$.
\end{lemma}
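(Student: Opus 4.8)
The plan is to reduce this statement to Lemma \ref{lem:main-ge}, which already tells us that $P(E)\le\UpProb(E)$ for any martingale measure $P$ and any event $E$. Since $\UpProb(E)\le\UpProb(\Omega)=1$ always holds by Lemma \ref{lem:coherence} and monotonicity, it suffices to prove the reverse inequality $\UpProb(E)\ge1$. Let $M$ be a continuous martingale on $[0,\infty)$ whose sample path lies in $E$ almost surely, say defined on some probability space $(\Theta,\GGG,\mathbb{Q})$ with an underlying filtration. First I would normalize: by subtracting the constant $M_0$ (or rather by conditioning on $M_0$, or simply noting $E$ need not constrain $\omega(0)$) we may assume the martingale is as given. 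The key step is to push $\mathbb{Q}$ forward onto $(\Omega,\FFF)$ via the path map $\theta\mapsto M_{\CDOT}(\theta)\in\Omega$; because $M$ has continuous paths, this map is measurable into $(\Omega,\FFF)$, and the image measure $P$ on $(\Omega,\FFF)$ has the property that the coordinate process $X_t(\omega)=\omega(t)$ is a martingale with respect to $P$ and the coordinate filtration $(\FFF_t)$ — this last point requires a small check that the martingale property transfers to the canonical filtration, which follows since $\FFF_t\subseteq$ (image of the original filtration) and the tower property is preserved under pushforward.

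Having produced such a $P$, the hypothesis that the sample path lies in $E$ almost surely gives $P(E)=1$. Now apply Lemma \ref{lem:main-ge} to this $P$ and this event $E$: we get $1=P(E)\le\UpProb(E)$. Combined with $\UpProb(E)\le1$ this yields $\UpProb(E)=1$, as desired.

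I expect the main (minor) obstacle to be the verification that the pushforward coordinate process is a genuine martingale with respect to the \emph{canonical} filtration $(\FFF_t)$ on $\Omega$, rather than merely with respect to the original filtration on $\Theta$. This is standard — the canonical filtration is the smallest one making the coordinates adapted, so it is contained in the pushforward of any filtration to which $M$ is adapted, and conditional expectations only shrink under a smaller $\sigma$-algebra while the martingale equality $\Expect_P[X_t\givn\FFF_s]=X_s$ continues to hold by iterated conditioning. One should also make sure $X_t$ is $P$-integrable for each $t$, which is immediate since $M_t$ is $\mathbb{Q}$-integrable and $P$ is its law. A reader might worry about whether $M$ needs to be uniformly integrable or have any boundedness — it does not, because Lemma \ref{lem:main-ge} only requires the martingale property and nothing about convergence or unboundedness of $X$.
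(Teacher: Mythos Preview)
Your proposal is correct and takes essentially the same approach as the paper: the paper's proof is the single sentence ``This is a special case of Lemma~\ref{lem:main-ge},'' and your argument simply spells out the pushforward-to-canonical-space details that make that reduction work. The verification you flag (that the coordinate process is a martingale for the canonical filtration under the pushforward law) is standard and your sketch of it is sound.
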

\begin{proof}
  This is a special case of Lemma~\ref{lem:main-ge}
  applied to $F:=\III_E$.
\end{proof}

In particular,
applying Lemma~\ref{lem:super-coherence} to Brownian motion
started at $c\in\bbbr$ gives
\begin{equation}\label{eq:DS-one}
  \UpProb(\omega(0)=c,\omega\in\DS)
  =
  1
\end{equation}
and
\begin{equation}\label{eq:one}
  \UpProb(\omega(0)=c,A_{\infty}=\infty)=1
\end{equation}
(for the latter we also need Lemma~\ref{lem:A-BM} below).
Both (\ref{eq:DS-one}) and (\ref{eq:one}) have been used above.

\begin{lemma}\label{lem:lower-upper}
  Let $\UpProb(B)=1$.
  For every set $E\subseteq\Omega$,
  $\LowProb(E;B)\le\UpProb(E;B)$.
\end{lemma}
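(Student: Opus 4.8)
The plan is to unwind the definitions and reduce the claim to coherence. Recall that $\LowProb(E;B) = 1 - \UpProb(E^c;B) = 1 - \UpProb(E^c\cap B)$ and $\UpProb(E;B) = \UpProb(E\cap B)$, so the inequality $\LowProb(E;B)\le\UpProb(E;B)$ is equivalent to
\begin{equation*}
  \UpProb(E\cap B) + \UpProb(E^c\cap B) \ge 1.
\end{equation*}
First I would observe that the events $E\cap B$ and $E^c\cap B$ are disjoint and their union is $B$, so this is just a statement of (finite) superadditivity of $\UpProb$ on a partition of $B$, combined with $\UpProb(B)=1$. Thus it suffices to prove the general superadditivity fact: for any disjoint $E_1,E_2\subseteq\Omega$, $\UpProb(E_1)+\UpProb(E_2)\ge\UpProb(E_1\cup E_2)$, and then apply it with $E_1 = E\cap B$, $E_2 = E^c\cap B$, $E_1\cup E_2 = B$, using $\UpProb(B)=1$.

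The key step is the superadditivity itself, and here the argument is the standard coherence trick. Suppose $\mathfrak{S}^{(1)}$ and $\mathfrak{S}^{(2)}$ are positive capital processes witnessing $\UpProb(E_1)$ and $\UpProb(E_2)$ up to $\epsilon$, so that $\mathfrak{S}^{(i)}_0 < \UpProb(E_i)+\epsilon$ and $\liminf_{t\to\infty}\mathfrak{S}^{(i)}_t(\omega)\ge\III_{E_i}(\omega)$ for all $\omega$. I would then consider the average $\mathfrak{S} := \tfrac12(\mathfrak{S}^{(1)}+\mathfrak{S}^{(2)})$, which is again a positive capital process (the class of positive capital processes is closed under addition and under multiplication by positive constants, directly from the representation~(\ref{eq:positive-capital})), with $\mathfrak{S}_0 = \tfrac12(\mathfrak{S}^{(1)}_0+\mathfrak{S}^{(2)}_0)$. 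For $\omega\in E_1\cup E_2$, exactly one of $\III_{E_1}(\omega),\III_{E_2}(\omega)$ equals $1$ and the other is $0$ (disjointness), while both $\mathfrak{S}^{(i)}_t$ are positive; hence $\liminf_{t\to\infty}\mathfrak{S}_t(\omega)\ge\tfrac12$. Rescaling by $2$ gives a positive capital process $2\mathfrak{S}$ with $\liminf_{t\to\infty}(2\mathfrak{S})_t(\omega)\ge\III_{E_1\cup E_2}(\omega)$ and initial value $\mathfrak{S}^{(1)}_0+\mathfrak{S}^{(2)}_0 < \UpProb(E_1)+\UpProb(E_2)+2\epsilon$. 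Taking the infimum and letting $\epsilon\to0$ yields $\UpProb(E_1\cup E_2)\le\UpProb(E_1)+\UpProb(E_2)$.

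I do not expect any real obstacle here; the only point requiring a little care is the treatment of the case $\UpProb(E_1)=\infty$ or $\UpProb(E_2)=\infty$, where the inequality is trivial (and in our application both quantities are at most $1$ anyway since $E\cap B, E^c\cap B\subseteq\Omega$ and $\UpProb(\Omega)=1$ by Lemma~\ref{lem:coherence}), and the observation that $\III_E$ takes only the values $0$ and $1$ so that $\liminf_{t\to\infty}\mathfrak{S}_t\ge\III_E$ is automatic off $E$ by positivity of $\mathfrak{S}$. Combining superadditivity with the hypothesis $\UpProb(B)=1$ then gives $\UpProb(E\cap B)+\UpProb(E^c\cap B)\ge\UpProb(B)=1$, which is exactly the desired inequality $\LowProb(E;B)\le\UpProb(E;B)$.
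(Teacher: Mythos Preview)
Your argument is correct and follows essentially the same route as the paper: reduce the inequality to $\UpProb(E\cap B)+\UpProb(E^c\cap B)\ge\UpProb(B)=1$ via finite subadditivity of $\UpProb$, which the paper simply cites (Lemma~\ref{lem:subadditivity}) rather than reproving inline. Two small remarks: what you call ``superadditivity'' is in fact subadditivity of $\UpProb$ (the inequality $\UpProb(E_1\cup E_2)\le\UpProb(E_1)+\UpProb(E_2)$), and neither disjointness nor the average-then-rescale detour is needed---simply summing $\mathfrak{S}^{(1)}+\mathfrak{S}^{(2)}$ already gives $\liminf\ge 1$ on $E_1\cup E_2$ since at least one indicator equals~$1$ there.
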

\begin{proof}
  Suppose $\LowProb(E;B)>\UpProb(E;B)$ for some $E$;
  by the definition of $\LowProb$,
  this would mean that $\UpProb(E;B)+\UpProb(E^c;B)<1$.
  Since $\UpProb(\cdot;B)$ is finitely subadditive
  (see Lemma~\ref{lem:subadditivity}),
  this would imply $\UpProb(\Omega;B)<1$,
  which is equivalent to $\UpProb(B)<1$
  and, therefore, contradicts our assumption.
\end{proof}

\section{Existence of quadratic variation}
\label{sec:2-variation}

In this paper,
the set $\Omega$ is always equipped with the metric
\begin{equation}\label{eq:metric}
  \rho(\omega_1,\omega_2)
  :=
  \sum_{d=1}^{\infty}
  2^{-d}
  \sup_{t\in[0,2^d]}
  \left(
    \lvert \omega_1(t)-\omega_2(t) \rvert
    \wedge
    1
  \right)
\end{equation}
(and the corresponding topology and Borel $\sigma$-algebra,
the latter coinciding with $\FFF$).
This makes it a complete and separable metric space.
The main goal of this section is to prove
that the sequence of continuous functions
$t\in[0,\infty)\mapsto A^n_t(\omega)$
is convergent in $\Omega$ for typical $\omega$;
this is done in Lemma \ref{lem:convergence}.
This will establish the existence of $A(\omega)\in\Omega$ for typical $\omega$,
which is part of Theorem \ref{thm:main-constructive}(a).
It is obvious that, when it exists,
$A(\omega)$ is increasing and $A_0(\omega)=0$.
The last part of Theorem \ref{thm:main-constructive}(a),
asserting that the intervals of constancy of $\omega$ and $A(\omega)$
coincide for typical $\omega$,
will be proved in the next section
(Lemma~\ref{lem:same-intervals}).
\begin{lemma}\label{lem:quadratic-variation}
  For each $T>0$,
  for typical $\omega$,
  $t\in[0,T]\mapsto A^n_t$
  is a Cauchy sequence of functions in $C[0,T]$.
\end{lemma}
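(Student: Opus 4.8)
The plan is to turn the increments $A^{n+1}-A^n$ into capital processes and apply a Doob-type square trick. The starting point is the elementary identity, valid for every $n$ and $t$,
$$
  A^n_t(\omega)=\bigl(\omega(t)-\omega(0)\bigr)^2-2\,\mathfrak{M}^n_t(\omega),
$$
obtained by telescoping $\bigl(\omega(T^n_k\wedge t)-\omega(0)\bigr)^2-\bigl(\omega(T^n_{k-1}\wedge t)-\omega(0)\bigr)^2$, where $\mathfrak{M}^n_t:=\sum_{k\ge0}\bigl(\omega(T^n_{k-1}\wedge t)-\omega(0)\bigr)\bigl(\omega(T^n_k\wedge t)-\omega(T^n_{k-1}\wedge t)\bigr)$ is the capital process of the simple strategy whose stake over $[T^n_{k-1},T^n_k)$ is $\omega(T^n_{k-1})-\omega(0)$. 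To make the stakes bounded I would stop every strategy below at $\sigma_L\wedge T$ with $\sigma_L:=\inf\{t:\lvert\omega(t)\rvert\ge L\}$; since every continuous $\omega$ lies in $\{\sigma_L>T\}$ for some $L\in\bbbn$ and a countable union of null sets is null (Lemma~\ref{lem:subadditivity}), it suffices to treat each $\{\sigma_L>T\}$ separately, and from now on all processes are so stopped. The identity gives $A^{n+1}_t-A^n_t=2\,\mathfrak{D}^n_t$ with $\mathfrak{D}^n:=\mathfrak{M}^n-\mathfrak{M}^{n+1}$ a simple capital process starting at $0$; using $\bbbd_n\subseteq\bbbd_{n+1}$ one checks that each coarse hitting time $T^n_k$ is also a fine one, so that $\mathfrak{D}^n$ can be written as a transform $\sum_i g_i\bigl(\omega(T^{n+1}_i\wedge t)-\omega(T^{n+1}_{i-1}\wedge t)\bigr)$ along the $\bbbd_{n+1}$-grid whose stakes $g_i$ (current coarse level minus current fine level) satisfy $\lvert g_i\rvert\le 2^{-n-1}$.

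Next I would square $\mathfrak{D}^n$. Writing $(\mathfrak{D}^n_t)^2$ out along the $\bbbd_{n+1}$-grid gives $(\mathfrak{D}^n_t)^2=2\,\mathfrak{N}^n_t+Q^n_t$, where $\mathfrak{N}^n$ is the simple capital process starting at $0$ with stake $g_i\,\mathfrak{D}^n_{T^{n+1}_{i-1}}$ over $[T^{n+1}_{i-1},T^{n+1}_i)$ (bounded for each $i$, hence legitimate), and $0\le Q^n_t=\sum_i g_i^2\bigl(\omega(T^{n+1}_i\wedge t)-\omega(T^{n+1}_{i-1}\wedge t)\bigr)^2\le 2^{-2n-2}A^{n+1}_t$. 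Now bound $A^{n+1}_t=(\omega(t)-\omega(0))^2-2\,\mathfrak{M}^{n+1}_t\le 4L^2-2\,\mathfrak{M}^{n+1}_t$ and note $\mathfrak{M}^{n+1}_t\le 2L^2$ (immediate from $A^{n+1}_t\ge0$). A short computation then shows that
$$
  \mathfrak{P}^n:=\mathfrak{N}^n-2^{-2n-2}\mathfrak{M}^{n+1}+2^{-2n-1}L^2
$$
is nonnegative, hence a positive (simple) capital process with $\mathfrak{P}^n_0=2^{-2n-1}L^2$, and moreover $\mathfrak{P}^n_t\ge\tfrac12(\mathfrak{D}^n_t)^2$ for every $t$. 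Fix a summable sequence, say $\eta_n:=2^{-n/3}$ (so that $\sum_n 2^{-2n}/\eta_n^2<\infty$), and let $\widehat{\mathfrak{P}}^n$ be $\mathfrak{P}^n$ stopped as soon as it reaches $\tfrac12\eta_n^2$; it is still a positive capital process with the same initial value, and whenever $\sup_{t\le T}\lvert\mathfrak{D}^n_t\rvert\ge\eta_n$ it reaches, and thereafter equals, $\tfrac12\eta_n^2$ by time $T$.

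Finally I would set $\mathfrak{S}:=\sum_{n}2\eta_n^{-2}\,\widehat{\mathfrak{P}}^n$, which is a positive capital process because $\mathfrak{S}_0=L^2\sum_n 2^{-2n}/\eta_n^2<\infty$. Suppose $\omega\in\{\sigma_L>T\}$ and $\bigl(A^n_\cdot(\omega)\bigr)$ is \emph{not} Cauchy in $C[0,T]$; since $\mathfrak{M}^m-\mathfrak{M}^n=-\sum_{l=n}^{m-1}\mathfrak{D}^l$, Cauchyness would follow from $\sum_n\sup_{t\le T}\lvert\mathfrak{D}^n_t\rvert<\infty$, so there must be infinitely many $n$ with $\sup_{t\le T}\lvert\mathfrak{D}^n_t\rvert\ge\eta_n$. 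For each such $n$ the term $2\eta_n^{-2}\widehat{\mathfrak{P}}^n$ eventually contributes at least $1$ to $\mathfrak{S}$, so $\lim_{t\to\infty}\mathfrak{S}_t(\omega)=\infty$. Multiplying $\mathfrak{S}$ by an arbitrarily small constant (it still diverges on this set) shows $\{\omega\in\{\sigma_L>T\}:\ (A^n_\cdot)\text{ not Cauchy in }C[0,T]\}$ is null; taking the union over $L\in\bbbn$ finishes the proof.

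I expect the main obstacle to be the circularity in the estimate: the bound on $\mathfrak{D}^n$ involves $A^{n+1}$, which is built from $\mathfrak{M}^{n+1}$, which is in turn controlled only through the very quantities $\mathfrak{D}^l$ one is estimating. The device that breaks it is to keep $A^{n+1}$ inside the capital process — bounding it by $(\omega(t)-\omega(0))^2-2\,\mathfrak{M}^{n+1}_t$ and absorbing $\mathfrak{M}^{n+1}$ into $\mathfrak{P}^n$ — together with the observation that one does not need the content of the bad set to be small, only one positive capital process that diverges on it, assembled from countably many triggered bets each worth $1$. The remaining work is routine but a little delicate: verifying that $\mathfrak{M}^n,\mathfrak{D}^n,\mathfrak{N}^n,\mathfrak{P}^n$ are genuine simple capital processes (bounded stakes after the $\sigma_L\wedge T$ stop, stopping-time sequences tending to $\infty$, continuity in $t$), and the elementary check of the grid identities yielding $\lvert g_i\rvert\le 2^{-n-1}$ and $Q^n_t\le 2^{-2n-2}A^{n+1}_t$.
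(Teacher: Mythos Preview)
Your proof is correct and takes a genuinely different route from the paper's. The paper encodes the increment $A^n-A^{n-1}$ via $\pm1$-valued ``coin flips'' $\xi_k$ (recording whether the fine walk returns or continues over two steps) and applies a game-theoretic Hoeffding supermartingale to obtain the explicit two-sided bounds $|A^n_t-A^{n-1}_t|\le 2^{-n}A^n_t+O(n2^{-n})$; it then runs a separate recursion to show $\sup_n A^n_T<\infty$ before concluding Cauchyness. Your argument instead uses the discrete It\^o identity $A^n=(\omega-\omega(0))^2-2\mathfrak{M}^n$, writes $A^{n+1}-A^n=2\mathfrak{D}^n$ as a simple capital process with stakes bounded by $2^{-n-1}$, and applies the $L^2$/Doob square trick $(\mathfrak{D}^n)^2=2\mathfrak{N}^n+Q^n$ to dominate $\tfrac12(\mathfrak{D}^n)^2$ by a positive simple capital process of initial value $O(2^{-2n})$; the clever absorption of $A^{n+1}$ back into $\mathfrak{M}^{n+1}$ via the same It\^o identity is exactly what breaks the apparent circularity. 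Your approach is more self-contained (no appendix on Hoeffding, no auxiliary boundedness recursion for $A^n_T$), while the paper's approach has the advantage that the quantitative inequalities it produces are reused verbatim in later lemmas (Lemmas~\ref{lem:modulus-P}, \ref{lem:bound}, and~\ref{lem:same-intervals}).

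The technical points you flag at the end---bounded stakes after stopping at $\sigma_L\wedge T$, the verification that every $T^n_k$ is some $T^{n+1}_i$ (so that $|g_i|\le 2^{-n-1}$), and that the stopped $\widehat{\mathfrak{P}}^n$ remains a legitimate simple capital process with $\tau_k\to\infty$---are all routine and go through as you indicate.
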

\begin{proof}
  Fix a $T>0$ and fix temporarily an $n\in\{1,2,\ldots\}$.
  Let $\kappa\in\{0,1\}$ be such that $T^{n-1}_0=T^n_{\kappa}$
  and,
  for each $k=1,2,\ldots$,
  let
  \begin{equation*}
    \xi_k
    :=
    \begin{cases}
      1 & \text{if $\omega(T^n_{\kappa+2k})=\omega(T^n_{\kappa+2k-2})$}\\
      -1 & \text{otherwise}
    \end{cases}
  \end{equation*}
  (this is only defined when $T^n_{\kappa+2k}<\infty$).
  If $\omega$ were generated by Brownian motion,
  $\xi_k$ would be a random variable taking value $j$, $j\in\{1,-1\}$,
  with probability $1/2$;
  in particular,
  the expected value of $\xi_k$ would be $0$.
  As the standard backward induction procedure shows,
  this remains true in our current framework
  in the following game-theoretic sense:
  there exists a simple trading strategy that,
  when started with initial capital $0$ at time $T^n_{\kappa+2k-2}$,
  ends with $\xi_k$ at time $T^n_{\kappa+2k}$,
  provided both times are finite;
  moreover, the corresponding simple capital process is always between $-1$ and $1$.
  (Namely,
  at time $T^n_{\kappa+2k-1}$
  bet $-2^n$ if $\omega(T^n_{\kappa+2k-1})>\omega(T^n_{\kappa+2k-2})$
  and bet $2^n$ otherwise.)
  Notice that the increment of the process $A^n_t-A^{n-1}_t$
  over the time interval $[T^n_{\kappa+2k-2},T^n_{\kappa+2k}]$ is
  \begin{equation*}
    \eta_k
    :=
    \begin{cases}
      2(2^{-n})^2=2^{-2n+1} & \text{if $\xi_k=1$}\\
      2(2^{-n})^2-(2^{-n+1})^2=-2^{-2n+1} & \text{if $\xi_k=-1$},
    \end{cases}
  \end{equation*}
  i.e., $\eta_k=2^{-2n+1}\xi_k$.

  The game-theoretic version of Hoeffding's inequality
  (see Theorem \ref{thm:super} in Appendix below)
  shows that for any constant $\lambda\in\bbbr$
  there exists a simple capital process $\mathfrak{S}^n$
  with $\mathfrak{S}^n_0=1$ such that,
  for all $K=0,1,2,\ldots$,
  \begin{equation}\label{eq:S}
    \mathfrak{S}^n_{T^n_{\kappa+2K}}
    \ge
    \prod_{k=1}^K
    \exp
    \left(
      \lambda\eta_k
      -
      2^{-4n+1}\lambda^2
    \right).
  \end{equation}
  According to Equation (\ref{eq:positive}) in Appendix
  (with $x_n$ corresponding to $\eta_k$),
  such $\mathfrak{S}^n$ can be defined as the capital process
  of the simple trading strategy betting the current capital times
  \begin{equation*}
    \frac{e^{\lambda 2^{-2n+1}}-e^{-\lambda 2^{-2n+1}}}{2^{-2n+2}}
    \exp
    \left(
      -\frac{\lambda^2}{8}
      \left(
        2^{-2n+2}
      \right)^2
    \right)
  \end{equation*}
  on $A^n_t-A^{n-1}_t$ at each time $T^n_{\kappa+2k-2}$,
  $k\in\{1,2,\ldots\}$.
  In terms of the original security,
  this simple trading strategy bets $0$ on $\omega$
  at each time $T^n_{\kappa+2k-2}$
  and bets the current capital times
  \begin{equation*}
    2
    \left(
      \omega(T^n_{\kappa+2k-2})
      -
      \omega(T^n_{\kappa+2k-1})
    \right)
    \frac{e^{\lambda 2^{-2n+1}}-e^{-\lambda 2^{-2n+1}}}{2^{-2n+2}}
    \exp
    \left(
      -\frac{\lambda^2}{8}
      \left(
        2^{-2n+2}
      \right)^2
    \right)
  \end{equation*}
  on $\omega$
  at each time $T^n_{\kappa+2k-1}$,
  $k\in\{1,2,\ldots\}$.
  It is clear that the process $\mathfrak{S}^n$ is positive:
  it is constant in each time interval $[T^n_{\kappa+2k-2},T^n_{\kappa+2k-1}]$,
  and is linear in $\omega(t)$
  in each time interval $[T^n_{\kappa+2k-1},T^n_{\kappa+2k}]$;
  therefore, its positivity follows from its positivity
  (cf.\ (\ref{eq:S}))
  at the points $T^n_{\kappa+2K}$, $K\in\{0,1,2,\ldots\}$.

  Fix temporarily $\alpha>0$.
  It is easy to see that,
  since the sum of the positive capital processes $\mathfrak{S}^n$
  over $n=1,2,\ldots$ with weights $2^{-n}$
  will also be a positive capital process,
  none of these processes will ever exceed $2^n2/\alpha$
  except for a set of $\omega$ of upper price at most $\alpha/2$.
  The inequality
  \begin{equation*}
    \prod_{k=1}^K
    \exp
    \left(
      \lambda\eta_k
      -
      2^{-4n+1}\lambda^2
    \right)
    \le
    2^n
    \frac{2}{\alpha}
    \le
    e^n
    \frac{2}{\alpha}
  \end{equation*}
  can be equivalently rewritten as
  \begin{equation}\label{eq:equivalent}
    \lambda
    \sum_{k=1}^K
    \eta_k
    \le
    K \lambda^2 2^{-4n+1}
    +
    n
    +
    \ln\frac{2}{\alpha}.
  \end{equation}
  Plugging in the identities
  \begin{align*}
    K
    &=
    \frac
    {
      A^n_{T_{\kappa+2K}^n}
      -
      A^n_{T_{\kappa}^n}
    }
    {
      2^{-2n+1}
    },\\
    \sum_{k=1}^K\eta_k
    &=
    \left(
      A^n_{T_{\kappa+2K}^n}
      -
      A^n_{T_{\kappa}^n}
    \right)
    -
    \left(
      A^{n-1}_{T_{\kappa+2K}^n}
      -
      A^{n-1}_{T_{\kappa}^n}
    \right),
  \end{align*}
  and taking $\lambda:=2^n$,
  we can transform (\ref{eq:equivalent}) to
  \begin{equation}\label{eq:decreasing-prepreliminary}
    \left(
      A^n_{T_{\kappa+2K}^n}
      -
      A^n_{T_{\kappa}^n}
    \right)
    -
    \left(
      A^{n-1}_{T_{\kappa+2K}^n}
      -
      A^{n-1}_{T_{\kappa}^n}
    \right)
    \le
    2^{-n}
    \left(
      A^n_{T_{\kappa+2K}^n}
      -
      A^n_{T_{\kappa}^n}
    \right)
    +
    \frac{n+\ln\frac{2}{\alpha}}{2^n},
  \end{equation}
  which implies
  \begin{equation}\label{eq:decreasing-preliminary}
    A^n_{T_{\kappa+2K}^n}
    -
    A^{n-1}_{T_{\kappa+2K}^n}
    \le
    2^{-n}
    A^n_{T_{\kappa+2K}^n}
    +
    2^{-2n+1}
    +
    \frac{n+\ln\frac{2}{\alpha}}{2^n}.
  \end{equation}
  This is true for any $K=0,1,2,\ldots$;
  choosing the largest $K$ such that $T^n_{\kappa+2K}\le t$,
  we obtain
  \begin{equation}\label{eq:decreasing}
    A^n_t
    -
    A^{n-1}_t
    \le
    2^{-n}
    A^n_t
    +
    2^{-2n+2}
    +
    \frac{n+\ln\frac{2}{\alpha}}{2^n},
  \end{equation}
  for any $t\in[0,\infty)$
  (the simple case $t<T^n_{\kappa}$ has to be considered separately).
  Proceeding in the same way but taking $\lambda:=-2^n$,
  we obtain
  \begin{equation*}
    \left(
      A^n_{T_{\kappa+2K}^n}
      -
      A^n_{T_{\kappa}^n}
    \right)
    -
    \left(
      A^{n-1}_{T_{\kappa+2K}^n}
      -
      A^{n-1}_{T_{\kappa}^n}
    \right)
    \ge
    -2^{-n}
    \left(
      A^n_{T_{\kappa+2K}^n}
      -
      A^n_{T_{\kappa}^n}
    \right)
    -
    \frac{n+\ln\frac{2}{\alpha}}{2^n}
  \end{equation*}
  instead of (\ref{eq:decreasing-prepreliminary})
  and
  \begin{equation*}
    A^n_{T_{\kappa+2K}^n}
    -
    A^{n-1}_{T_{\kappa+2K}^n}
    \ge
    -2^{-n}
    A^n_{T_{\kappa+2K}^n}
    -
    2^{-2n+1}
    -
    \frac{n+\ln\frac{2}{\alpha}}{2^n}
  \end{equation*}
  instead of (\ref{eq:decreasing-preliminary}),
  which gives
  \begin{equation}\label{eq:increasing}
    A^n_t
    -
    A^{n-1}_t
    \ge
    -2^{-n}
    A^n_t
    -
    2^{-2n+2}
    -
    \frac{n+\ln\frac{2}{\alpha}}{2^n}
  \end{equation}
  instead of (\ref{eq:decreasing}).
  We know that that (\ref{eq:decreasing}) and (\ref{eq:increasing})
  hold for all $t\in[0,\infty)$ and all $n=1,2,\ldots$
  except for a set of $\omega$ of upper price at most $\alpha$.

  Now we have all ingredients to complete the proof.
  Suppose there exists $\alpha>0$
  such that (\ref{eq:decreasing}) and (\ref{eq:increasing})
  hold for all $n=1,2,\ldots$
  (this is true for typical $\omega$).
  First let us show that the sequence $A^n_T$, $n=1,2,\ldots$, is bounded.
  Define a new sequence $B^n$, $n=0,1,2,\ldots$, as follows:
  $B^0:=A^0_T$
  and $B^n$, $n=1,2,\ldots$, are defined inductively by
  \begin{equation}\label{eq:decreasing-B}
    B^n
    :=
    \frac{1}{1-2^{-n}}
    \left(
      B^{n-1}
      +
      2^{-2n+2}
      +
      \frac{n+\ln\frac{2}{\alpha}}{2^n}
    \right)
  \end{equation}
  (notice that this is equivalent to (\ref{eq:decreasing})
  with $B^n$ in place of $A^n_t$ and $=$ in place of $\le$).
  As $A^n_T\le B^n$ for all $n$,
  it suffices to prove that $B^n$ is bounded.
  If it is not, $B^N\ge1$ for some $N$.
  By (\ref{eq:decreasing-B}),
  $B^n\ge1$ for all $n\ge N$.
  Therefore, again by (\ref{eq:decreasing-B}),
  \begin{equation*}
    B^n
    \le
    B^{n-1}
    \frac{1}{1-2^{-n}}
    \left(
      1
      +
      2^{-2n+2}
      +
      \frac{n+\ln\frac{2}{\alpha}}{2^n}
    \right),
    \quad
    n>N,
  \end{equation*}
  and the boundedness of the sequence $B^n$
  follows from $B^N<\infty$
  and
  \begin{equation*}
    \prod_{n=N+1}^{\infty}
    \frac{1}{1-2^{-n}}
    \left(
      1
      +
      2^{-2n+2}
      +
      \frac{n+\ln\frac{2}{\alpha}}{2^n}
    \right)
    <
    \infty.
  \end{equation*}
  Now it is obvious that the sequence $A^n_t$ is Cauchy in $C[0,T]$:
  (\ref{eq:decreasing}) and (\ref{eq:increasing}) imply
  \begin{equation*}
    \left|
      A^n_t
      -
      A^{n-1}_t
    \right|
    \le
    2^{-n}
    A^n_T
    +
    2^{-2n+2}
    +
    \frac{n+\ln\frac{2}{\alpha}}{2^n}
    =
    O(n/2^n).
    \qedhere
  \end{equation*}
\end{proof}

Lemma \ref{lem:quadratic-variation} implies
that, for typical $\omega$, the sequence $t\in[0,\infty)\mapsto A^n_t$
is Cauchy in $\Omega$.
Therefore, we have the following implication.
\begin{lemma}\label{lem:convergence}
  The event that
  the sequence of functions $t\in[0,\infty)\mapsto A^n_t$
  converges in $\Omega$
  is full.
\end{lemma}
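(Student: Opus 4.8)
The plan is to glue together the local statement of Lemma~\ref{lem:quadratic-variation} over a countable cofinal family of time horizons and then invoke the completeness of the metric space $(\Omega,\rho)$ defined by~(\ref{eq:metric}). Since $(\Omega,\rho)$ is complete, it suffices to prove that the event that the sequence $(t\mapsto A^n_t(\omega))_{n\ge1}$ is Cauchy in $\Omega$ is full: the limit will then automatically be an element of $\Omega$, which is exactly the conclusion of Lemma~\ref{lem:convergence}.

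First I would record the elementary fact that a sequence $(\phi^n)$ in $\Omega$ is Cauchy in $\rho$ whenever it is Cauchy in $C[0,2^d]$ for every $d=1,2,\ldots$: given $\epsilon>0$, choose $D$ with $\sum_{d>D}2^{-d}<\epsilon$, so that $\rho(\phi^m,\phi^n)\le\epsilon+\sup_{t\in[0,2^D]}\bigl(\lvert\phi^m(t)-\phi^n(t)\rvert\wedge1\bigr)$, and the last supremum is $<\epsilon$ as soon as $m,n$ are large enough. Next, for each $d$ I would set $N_d:=\{\omega\in\Omega\st(t\mapsto A^n_t(\omega))_n\text{ is not Cauchy in }C[0,2^d]\}$; by Lemma~\ref{lem:quadratic-variation} applied with $T:=2^d$, each $N_d$ is null, and hence, by the countable subadditivity of outer content (Lemma~\ref{lem:subadditivity}), so is $N:=\bigcup_{d\ge1}N_d$. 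For every $\omega\notin N$ the sequence $(t\mapsto A^n_t(\omega))_n$ is Cauchy in $C[0,2^d]$ for all $d$, hence Cauchy in $\Omega$ by the first step, hence convergent in $\Omega$ by completeness. Thus the event that $(t\mapsto A^n_t)_n$ converges in $\Omega$ contains $N^c$ and is therefore full.

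I do not expect any genuine obstacle here: all the analytic content (the Hoeffding-type control of the increments $A^n-A^{n-1}$) is already carried by Lemma~\ref{lem:quadratic-variation}, and what remains is the routine passage from uniform-on-compacts convergence to convergence in the metric $\rho$ together with one application of countable subadditivity. The only point deserving a line of care is that each $A^n(\omega)$ is genuinely an element of $\Omega$, i.e.\ that $t\mapsto A^n_t(\omega)$ is continuous: this is immediate from~(\ref{eq:A}), since the stopping times satisfy $T^n_k(\omega)\to\infty$ (otherwise continuity of $\omega$ would be violated at $\lim_k T^n_k$, as consecutive values $\omega(T^n_k)$ differ by at least $2^{-n}$), so on each bounded interval the sum defining $A^n_t(\omega)$ has only finitely many non-constant terms, each of them continuous in $t$.
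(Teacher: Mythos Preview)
Your proposal is correct and follows the same route as the paper, which dispatches the lemma in a single sentence just before its statement (``Lemma~\ref{lem:quadratic-variation} implies that, for typical $\omega$, the sequence $t\in[0,\infty)\mapsto A^n_t$ is Cauchy in $\Omega$''). You have simply unpacked the implicit steps---the passage from Cauchy in each $C[0,2^d]$ to Cauchy in $\rho$, the countable subadditivity over $d$, and the completeness of $\Omega$---and added the check that each $A^n(\omega)$ is continuous, which the paper leaves unmentioned.
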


We can see that the first term in the conjunction in (\ref{eq:stopping})
holds for typical $\omega$;
let us check that $\tau_s$ itself is a stopping time.
\begin{lemma}\label{lem:stopping}
  For each $s\ge0$,
  the function $\tau_s$ defined by (\ref{eq:stopping}) is a stopping time.
\end{lemma}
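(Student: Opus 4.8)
The plan is to verify directly that $\{\tau_s\le t\}\in\FFF_t$ for every $t\in[0,\infty)$. First I would record the elementary measurability facts. For each $n$ and each $u$, the value $A^n_u(\omega)$ depends on $\omega$ only through its restriction to $[0,u]$ (the times $T^n_k\wedge u$, and the values of $\omega$ at them, are determined by $\omega|_{[0,u]}$), so $A^n_u$ is $\FFF_u$-measurable; moreover $u\mapsto A^n_u$ is continuous and increasing. Consequently $\UpA_u=\limsup_n A^n_u$ and $\LowA_u=\liminf_n A^n_u$ are $\FFF_u$-measurable and increasing in $u$. Denote by $S_s(\omega)\subseteq[0,\infty)$ the set of $t$ at which the condition in braces in~(\ref{eq:stopping}) holds, so that $\tau_s(\omega)=\inf S_s(\omega)$. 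I would then prove: (i)~for each fixed $t'$ the event $\{t'\in S_s\}$ lies in $\FFF_{t'}$; (ii)~$S_s(\omega)$ is closed for every $\omega$; and (iii)~deduce $\{\tau_s\le t\}\in\FFF_t$ from (i) and (ii).

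For (i), split the defining condition into its two conjuncts. The conjunct $\sup_{u<t'}\UpA_u=\sup_{u<t'}\LowA_u\ge s$ defines an $\FFF_{t'}$-event: since $\UpA$ and $\LowA$ are increasing, each supremum may be taken over rational $u<t'$, which is a countable operation on $\FFF_{t'}$-measurable quantities. The conjunct $\UpA|_{[0,t')}=\LowA|_{[0,t')}\in C[0,t')$ is the one for which the continuity clause is essential. I claim it is equivalent to: the sequence of continuous functions $(A^n|_{[0,t')})_n$ converges uniformly on every compact subinterval of $[0,t')$. The implication $\Leftarrow$ is immediate (a uniform-on-compacts limit of continuous functions is continuous, and pointwise convergence gives $\limsup=\liminf$); for $\Rightarrow$, the common pointwise limit $A=\lim_n A^n$ is continuous and increasing on $[0,t')$, and a pointwise-convergent sequence of increasing functions with continuous increasing limit converges uniformly on each compact subinterval (a P\'olya/Dini-type argument using the intermediate value property of $A$). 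Uniform Cauchyness on compacts is a countable conjunction, over rational $\tau<t'$ and rational $\epsilon>0$, of the $\FFF_{t'}$-events ``$\exists N\ \forall m,n\ge N:\sup_{u\in[0,\tau]\cap\bbbq}\lvert A^m_u-A^n_u\rvert\le\epsilon$'' (the supremum over rationals suffices because $A^m-A^n$ is continuous). Hence this conjunct too defines an $\FFF_{t'}$-event, proving (i).

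For (ii), observe that the set of $t'$ satisfying the second conjunct is an interval $[0,c_1]$, where $c_1=c_1(\omega):=\sup\{t'\st\text{the second conjunct holds at }t'\}$, since that condition is downward closed and continuity on $[0,c_1)=\bigcup_{t'<c_1}[0,t')$ is a local property. The set of $t'$ satisfying the first conjunct is $\{t'\st\sup_{u<t'}\UpA_u\ge s\}$, and since $t'\mapsto\sup_{u<t'}\UpA_u$ is increasing and left-continuous this set is one of $[c_2,\infty)$, $(c_2,\infty)$, $\emptyset$. A short argument excludes the half-open case $(c_2,\infty)$ when $c_2<c_1$: in that case $\UpA$ is continuous at $c_2$, hence right-continuous there, which forces $\sup_{u<c_2}\UpA_u\ge s$, i.e.\ $c_2$ belongs to the set. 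Thus $S_s$ is always closed: it equals $[c_2,c_1]$ when $c_2\le c_1$ (whence $\tau_s=c_2$), a single point or $\emptyset$ when $c_1=c_2$, and $\emptyset$ when $c_2>c_1$.

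Finally, for (iii): by closedness, $\tau_s\le t$ iff $S_s\cap[0,t]\ne\emptyset$, and I would exhibit this event as a union of three members of $\FFF_t$. The first, $\bigcup_{t'\in\bbbq\cap[0,t]}\{t'\in S_s\}$, is in $\FFF_t$ by (i) and accounts for every $\omega$ for which $S_s\cap[0,t]$ has nonempty interior. The second, $\{t\in S_s\}$, is the case $t'=t$ of (i) and accounts for $\tau_s=t$. The third handles the degenerate case where $S_s\cap[0,t]$ is a single point strictly below $t$; by (ii) that point can only be $c_1$, so the event equals $\{c_1<t\}\cap\{\sup_{u<c_1}\UpA_u\ge s\}$, and using $\{c_1<q\}=\{\text{the second conjunct fails at }q\}\in\FFF_q$ one checks that $c_1\wedge t$, and hence also the left limit $\sup_{u<c_1\wedge t}\UpA_u$, is $\FFF_t$-measurable, so this set too lies in $\FFF_t$. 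Therefore $\{\tau_s\le t\}\in\FFF_t$. I expect the genuinely delicate points to be the equivalence in~(i) for the continuity conjunct---recognising that the clause ``${}\in C[0,t)$'' is exactly what makes the condition both $\FFF_t$-detectable and right-continuous enough---and the bookkeeping in~(iii) required to place $\{\tau_s\le t\}$ (not merely $\{\tau_s<t\}$, which would only give $\FFF_{t+}$-measurability) inside $\FFF_t$.
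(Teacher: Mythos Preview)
Your argument is correct, and it proceeds along a genuinely different route from the paper's proof. The paper simply writes down an explicit description of $\{\tau_s\le t\}$ as
\[
\Bigl\{\LowA_t\ge s
\And
\bigl(\forall q\in(0,t)\cap\bbbq:\LowA_q<s\Rightarrow\UpA_q=\LowA_q\bigr)
\And
\bigl(\forall q_1,q_2\in(0,s)\cap\bbbq\;\exists q\in(0,t)\cap\bbbq:\UpA_q=\LowA_q\in(q_1,q_2)\bigr)\Bigr\},
\]
a set that is visibly in $\FFF_t$; the equivalence with $\{\tau_s\le t\}$ is asserted rather than proved. You, by contrast, analyse $S_s$ structurally: you show that each slice $\{t'\in S_s\}$ lies in $\FFF_{t'}$, that $S_s$ is closed, and then reconstruct $\{\tau_s\le t\}$ as a union of three $\FFF_t$-measurable pieces, with the delicate third piece handling the possibility that $S_s\cap[0,t]$ is a single irrational point. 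Both proofs hinge on the same underlying idea---that the clause ``${}\in C[0,t)$'' makes the continuity condition detectable by countable data---but they encode it differently: the paper through density of the range of $A$ in $(0,s)$, you through uniform Cauchyness of the $A^n$ (via the P\'olya/Dini argument for monotone functions). Your version is considerably longer but also more transparent about why each hypothesis in the definition of $\tau_s$ is needed and why $\FFF_t$- rather than merely $\FFF_{t+}$-measurability is obtained; the paper's version is a one-line formula whose verification is left to the reader.
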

\begin{proof}
  It suffices to check that the condition $\tau_s\le t$
  can be written as
  \begin{equation}\label{eq:le-t}
        \forall(q_1,q_2)\subseteq(0,s)
        \;
        \exists q \in (0,t)\cap\bbbq:
        \UpA_q = \LowA_q \in (q_1,q_2),
  \end{equation}
  where $(q_1,q_2)$ range over the non-empty intervals with rational end-points.
  Let $T$ be the largest number in $[0,\infty]$
  such that the functions $\UpA|_{[0,T)}$ and $\LowA|_{[0,T)}$ coincide
  and are continuous;
  we will use $A'$ as the common notation for $\UpA|_{[0,T)}=\LowA|_{[0,T)}$.
  The condition $\tau_s\le t$ means that for some $t'\in[0,t]$
  the domain of $A'$ includes $[0,t')$ and $\sup_{u<t'}A'_u=s$.
  Now it is clear that the condition (\ref{eq:le-t})
  is satisfied if $\tau_s\le t$.
  In the opposite direction,
  suppose (\ref{eq:le-t}) is satisfied.
  Then $\UpA_u=\LowA_u$ whenever $u\in(0,t)$ satisfies $\LowA_u<s$.
  Indeed, if we had $\LowA_u<\UpA_u$ for such $u$,
  we could choose $(q_1,q_2)\subseteq(0,s)$ satisfying
  $\LowA_u<q_1<q_2<\UpA_u$
  and there would be no $q$ satisfying the required properties
  in (\ref{eq:le-t}):
  if $q\le u$, $\LowA_q\le\LowA_u<q_1$,
  and if $q\ge u$, $\UpA_q\ge\UpA_u>q_2$.
  Combining this result with (\ref{eq:le-t}),
  we can see that there is a function $A''$ with a domain $[0,t'')\subseteq[0,t)$
  such that $A''_u=\UpA_u=\LowA_u$ for all $u\in[0,t'')$
  and $\sup A''=s$.
  The function $A''$ is increasing and,
  by (\ref{eq:le-t}),
  continuous;
  this implies $\tau_s\le t$.
  \ifFULL\bluebegin
    Let us check that $\UpA_t$ and $\LowA_t$ are $\FFF_t$-measurable.
    It suffices to check that $A^n_t$ is $\FFF_t$-measurable;
    this follows from \cite{Karatzas/Shreve:1991},
    Proposition 1.2.18 and Lemma 1.2.15.
  \blueend\fi
\end{proof}

Let us now consider the case of Brownian motion.
\begin{lemma}\label{lem:A-BM}
  For any $c\in\bbbr$,
  $\Wiener_c(\forall t\in[0,\infty):\UpA_t=\LowA_t=t)=1$.
\end{lemma}
\begin{proof}
  It suffices to consider only rational values of $t$
  and, therefore, a fixed value of $t$.
  The convergence $A^n_t\to t$ (see (\ref{eq:A})) in $\Wiener_c$-probability
  can be deduced from the law of large numbers applied to $T^n_k$:
  \begin{itemize}
  \item
    the law of large numbers implies that $A^n_t\to t$ in $\Wiener_c$-probability
    since $\int(T^n_k-T^n_{k-1})\dd\Wiener_c=2^{-2n}$
    (this is a combination of the second statement of Theorem 2.49
    in \cite{Morters/Peres:2010},
    which is a corollary of Wald's second lemma,
    with the strong Markov property of Brownian motion);
  \item
    the law of large numbers is applicable
    because $\int(T^n_k-T^n_{k-1})^2\dd\Wiener_c<\infty$
    (see the proof of the second statement of Theorem 2.49
    in \cite{Morters/Peres:2010}).
  \end{itemize}
  It remains to apply Lemma~\ref{lem:convergence},
  which, in combination with Lemma~\ref{lem:main-ge}
  (applied to the indicator functions of events),
  implies that the sequence $A^n$ converges in $\Omega$
  $\Wiener_c$-almost surely.
\end{proof}

\begin{remark}
  This section is about the quadratic variation of the price path,
  but in finance the quadratic variation of the stochastic logarithm
  (see, e.g., \cite{Jacod/Shiryaev:2003}, p.~134)
  of a price process
  is usually even more important than the quadratic variation of the price process itself.
  A pathwise version of the stochastic logarithm has been studied
  by Norvai\v{s}a in \cite{Norvaisa:2000,Norvaisa:2001}.
  Consider an $\omega\in\Omega$ such that $A(\omega)$ exists,
  belongs to $\Omega$, and has the same intervals of constancy as $\omega$;
  Theorem \ref{thm:main-constructive}(a) says
  that these conditions are satisfied for typical $\omega$.
  Fix a time horizon $T>0$ and suppose, additionally,
  that $\inf_{t\in[0,T]}\omega(t)>0$.
  The limit
  \begin{equation*}
    R_t(\omega)
    :=
    \lim_{n\to\infty}
    \sum_{k=0}^{\infty}
    \frac
    {
      \omega(T^n_k\wedge t)
      -
      \omega(T^n_{k-1}\wedge t)
    }
    {
      \omega(T^n_{k-1}\wedge t)
    }
  \end{equation*}
  (where we use the same notation as in (\ref{eq:A}))
  exists for all $t\in[0,T]$
  and the function $R(\omega):t\in[0,T]\mapsto R_t(\omega)$ satisfies
  (\cite{Norvaisa:2001}, Proposition 56)
  \begin{equation*}
    R_t(\omega)
    =
    \ln\frac{\omega(t)}{\omega(0)}
    +
    \frac12
    \int_0^t
    \frac{\dd A_s(\omega)}{\omega^2(s)},
    \quad
    t\in[0,T].
  \end{equation*}
  In financial terms,
  the value $R_t(\omega)$ is the cumulative return of the security $\omega$ over $[0,t]$
  (\cite{Norvaisa:2000}, Section~2);
  in probabilistic terms,
  $R(\omega)$ is the pathwise stochastic logarithm of $\omega$.
  The quadratic variation of $R(\omega)$ can be defined as
  \begin{equation*}
    \lim_{n\to\infty}
    \sum_{k=0}^{\infty}
    \left(
      R_{T^n_k\wedge t}(\omega)
      -
      R_{T^n_{k-1}\wedge t}(\omega)
    \right)^2
    =
    \int_0^t
    \frac{\dd A_s(\omega)}{\omega^2(s)}
  \end{equation*}
  (the existence of the limit and the equality
  are also parts of Proposition 56 in \cite{Norvaisa:2001}).
\end{remark}
\ifFULL\bluebegin
  The definitions and propositions
  (numbered separately in each chapter)
  needed for understanding of Proposition 56 on p.~101 in \cite{Norvaisa:2001}:
  \begin{itemize}
  \item
    Definition~3 on p.~39: quadratic $\lambda$-variation (not really needed).
  \item
    Proposition~4 on p.~39: can be used as the definition of quadratic $\lambda$-variation.
  \item
    Definition~17 on p.~52: the left Cauchy $\lambda$-integral.
  \end{itemize}
\blueend\fi

\begin{remark}
  Analogues for c\`adl\`ag price paths of the main results of this section
  can be found in \cite{Vovk:2015-LMJ}.
\end{remark}

\section{Tightness}
\label{sec:tight}

In this section we will do some groundwork
for the proof of Theorems~\ref{thm:main-constructive}(b)
and~\ref{thm:supermain-constructive},
and will also finish the proof of Theorem~\ref{thm:main-constructive}(a).
We start from the results that show (see the next section)
that $\LowQ_c$
is tight in the topology induced by the metric~(\ref{eq:metric}).
\begin{lemma}\label{lem:modulus-P}
  For each $\alpha>0$ and $S\in\{1,2,4\ldots\}$,
  \begin{multline}\label{eq:modulus-P}
    \LowProb
    \bigl(
      \forall\delta\in(0,1)\;
      \forall s_1,s_2\in[0,S]:
      \left(
        0\le s_2-s_1\le\delta
        \And
        \tau_{s_2}<\infty
      \right)\\
      \Longrightarrow
      \left|
        \omega(\tau_{s_2})
        -
        \omega(\tau_{s_1})
      \right|
      \le
      230\,
      \alpha^{-1/2}
      S^{1/4}
      \delta^{1/8}
    \bigr)
    \ge
    1-\alpha.
  \end{multline}
\end{lemma}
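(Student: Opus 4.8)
The statement is a game-theoretic modulus-of-continuity estimate for the time-changed price path $s\mapsto\omega(\tau_s)$; the bound on its right-hand side is deliberately crude, and only the qualitative shape of the dependence on $\alpha$, $S$, $\delta$ is used later (for the tightness of $\LowQ_c$). The plan is to show that the complement $G^c$ of the event in~(\ref{eq:modulus-P}) satisfies $\UpProb(G^c)\le\alpha$ --- which is equivalent to the assertion, since $\LowProb=1-\UpProb(\CDOT^c)$ --- by exhibiting a positive capital process $\mathfrak{S}$ with $\mathfrak{S}_0\le\alpha$ and $\liminf_{t\to\infty}\mathfrak{S}_t(\omega)\ge1$ for every $\omega\in G^c$. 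One works throughout on the full event $\Omega_0$ on which $A$ exists, is a continuous increasing element of $\Omega$ with $A_0=0$ and with the same intervals of constancy as $\omega$ (Lemma~\ref{lem:convergence} and Theorem~\ref{thm:main-constructive}(a)); on $\Omega_0$ the map $s\mapsto\omega(\tau_s)$ is continuous and $A_{\tau_s}=s$ whenever $\tau_s<\infty$. Since $\UpProb(\Omega_0^c)=0$, it suffices by subadditivity (Lemma~\ref{lem:subadditivity}) to cover $G^c\cap\Omega_0$.

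The key input is a game-theoretic sub-Gaussian bound for the increments of $s\mapsto\omega(\tau_s)$, with the expected variance (a fourth-moment estimate would do equally well). For $0\le s_1<s_2$ and any $\lambda\in\bbbr$, the process that idles until $\tau_{s_1}$, then follows the exponential strategy realizing $\exp\bigl(\lambda(\omega(t)-\omega(\tau_{s_1}))-\tfrac{\lambda^2}{2}(A_t-s_1)\bigr)$ until $\tau_{s_2}$, and idles afterwards, is a positive supercapital process (hence is dominated by a positive capital process with the same initial value); it is obtained exactly as in the proof of Lemma~\ref{lem:quadratic-variation}, from the game-theoretic Hoeffding inequality of Theorem~\ref{thm:super} applied along the grids $\bbbd_n$, letting $n\to\infty$, which is legitimate on $\Omega_0$ by Lemma~\ref{lem:convergence}. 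On $\{\tau_{s_2}<\infty\}\cap\Omega_0$ its value at $\tau_{s_2}$ equals $\exp\bigl(\lambda(\omega(\tau_{s_2})-\omega(\tau_{s_1}))-\tfrac{\lambda^2}{2}(s_2-s_1)\bigr)$, because $A_{\tau_{s_2}}-A_{\tau_{s_1}}=s_2-s_1$. Scaling the initial value to $\exp\bigl(-\tfrac{\lambda^2}{2}(s_2-s_1)\bigr)$ and taking $\lambda=\pm x/(s_2-s_1)$ yields, for every $x>0$, a positive capital process of initial value $2\exp\bigl(-x^2/(2(s_2-s_1))\bigr)$ that reaches $1$ on $\{\tau_{s_2}<\infty,\ |\omega(\tau_{s_2})-\omega(\tau_{s_1})|>x\}\cap\Omega_0$.

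Now run the usual dyadic chaining. Since $S$ is a power of $2$, the points $j2^{-m}$ ($0\le j\le S2^m$) partition $[0,S]$; let $M_m(\omega)$ be the largest of $|\omega(\tau_{(j+1)2^{-m}})-\omega(\tau_{j2^{-m}})|$ over $0\le j<S2^m$ with $\tau_{(j+1)2^{-m}}<\infty$ (and $M_m:=0$ if there are none). Continuity of $s\mapsto\omega(\tau_s)$ together with the binary expansion gives, for any $s_1\le s_2$ in $[0,S]$ with $s_2-s_1\le\delta$ and $\tau_{s_2}<\infty$, the bound $|\omega(\tau_{s_2})-\omega(\tau_{s_1})|\le2\sum_{m\ge n_\delta}M_m$, where $n_\delta:=\lceil\log_2(1/\delta)\rceil$. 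Fix the thresholds $x_m:=b\,2^{-m/8}$. Summing, over $0\le j<S2^m$ and over $m\ge0$, the capital processes of the previous paragraph applied to the intervals $[s_1,s_2]=[j2^{-m},(j+1)2^{-m}]$ (each of length $2^{-m}$, so each process has initial value $2\exp(-x_m^2/(2\cdot2^{-m}))=2\exp(-b^2 2^{3m/4}/2)$) produces a positive capital process of initial value at most $2S\sum_{m\ge0}2^m\exp(-b^2 2^{3m/4}/2)$, which is $\le\alpha$ once $b:=\bigl(5+2\ln S+2\ln(1/\alpha)\bigr)^{1/2}$, say; this process reaches $1$ on $\{\,\exists m:\,M_m>x_m\,\}\cap\Omega_0$. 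On the complement of this last event, $\sum_{m\ge n_\delta}M_m\le b\sum_{m\ge n_\delta}2^{-m/8}=b\,2^{-n_\delta/8}/(1-2^{-1/8})\le b\,\delta^{1/8}/(1-2^{-1/8})$, so $|\omega(\tau_{s_2})-\omega(\tau_{s_1})|\le\bigl(2/(1-2^{-1/8})\bigr)\,b\,\delta^{1/8}$. As $G^c\cap\Omega_0\subseteq\{\exists m:M_m>x_m\}$, this gives $\UpProb(G^c)\le\alpha$. Finally, the elementary estimates $\ln S\le4S^{1/4}$ (for $S\ge1$) and $\ln(1/\alpha)\le(2/e)\alpha^{-1/2}$ (for $\alpha\in(0,1]$) give $b\le6.3\,\alpha^{-1/2}S^{1/4}$, and since $2/(1-2^{-1/8})<24.2$ the resulting bound is below $230\,\alpha^{-1/2}S^{1/4}\delta^{1/8}$; no attempt is made to optimize the constant.

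The one genuinely delicate step is the construction in the second paragraph: realizing $\exp\bigl(\lambda(\omega(t)-\omega(\tau_{s_1}))-\tfrac{\lambda^2}{2}(A_t-s_1)\bigr)$ as a bona fide positive capital process and justifying the passage $n\to\infty$ from its grid approximations (the same kind of limiting argument as the use of Theorem~\ref{thm:super} in the proof of Lemma~\ref{lem:quadratic-variation}), together with the routine-but-fiddly verification that on $\Omega_0$ one has $A_{\tau_s}=s$ and $s\mapsto\omega(\tau_s)$ is continuous, which makes the chaining valid. The chaining itself, the choice of the $x_m$, and the summation of initial capital over scales are entirely routine.
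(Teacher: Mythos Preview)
Your approach --- a direct sub-Gaussian tail bound from an exponential capital process, followed by standard dyadic chaining --- is conceptually cleaner than the paper's, which first bounds $\sum_i(\omega_i-\omega_{i-1})^2$ via the quadratic process $(\omega(t)-\omega(t_i))^2-A^{l,t_i}_t$ and then bounds the maximum increment by a second capital-process argument. However, there is a genuine gap in your key step.

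The process $\exp\bigl(\lambda(\omega(t)-\omega(\tau_{s_1}))-\tfrac{\lambda^2}{2}(A_t-s_1)\bigr)$ is not, in this framework, obviously a positive supercapital process. Theorem~\ref{thm:super} applied along $\bbbd_n$ yields a positive supercapital process with (essentially) $A^{n,\tau_{s_1}}_t$ in the exponent, not $A_t-s_1$; and positive capital processes are defined as countable \emph{sums} of positive simple capital processes, not as pointwise limits. The proof of Lemma~\ref{lem:quadratic-variation} that you cite as a model does not take a limit in $n$ of capital processes: it uses the level-$n$ Hoeffding process for each $n$ separately, summed with weights $2^{-n}$, to bound $|A^n_t-A^{n-1}_t|$. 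To make your argument rigorous you must either (i) exhibit the exponential process with the true $A$ as a bona fide positive capital process, which is not clear, or (ii) work at a fixed large level $l$ and separately control $A^{l,\tau_{s_1}}_{\tau_{s_2}}$ against $s_2-s_1$ --- which is precisely the content of the paper's preliminary estimates~(\ref{eq:from-below}) and~(\ref{eq:bound}), so you would be redoing that part of the paper's work rather than bypassing it.

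There is also a smaller circularity: you invoke the ``same intervals of constancy'' part of Theorem~\ref{thm:main-constructive}(a) to ensure continuity of $s\mapsto\omega(\tau_s)$ (which your chaining needs at the endpoints $s_1,s_2$), but in the paper that is Lemma~\ref{lem:same-intervals}, whose proof is explicitly said to be ``extracted from the proof of Lemma~\ref{lem:modulus-P}'' and is placed after it. This is a matter of logical ordering rather than a mathematical obstruction, but it means you cannot cite Theorem~\ref{thm:main-constructive}(a) wholesale at this point.
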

\begin{proof}
  Let $S=2^d$, where $d\in\{0,1,2,\ldots\}$.
  For each $m=1,2,\ldots$,
  divide the interval $[0,S]$ into $2^{d+m}$ equal subintervals
  of length $2^{-m}$.
  Fix, for a moment, such an $m$,
  and set
  $
    \beta=\beta_m
    :=
    (2^{1/4}-1)
    2^{-m/4}
    \alpha
  $
  (where $2^{1/4}-1$ is the normalizing constant
  ensuring that the $\beta_m$ sum to $\alpha$)
  and
  \begin{equation}\label{eq:t}
    t_i
    :=
    \tau_{i2^{-m}},
    \enspace
    \omega_i
    :=
    \omega(t_i),
    \quad
    i=0,1,\ldots,2^{d+m}
  \end{equation}
  (we will be careful to use $\omega_i$ only when $t_i<\infty$).

  We will first replace the quadratic variation process $A$
  (in terms of which the stopping times $\tau_s$ are defined)
  by a version of $A^l$ for a large enough $l$\ifFULL\bluebegin\
    ($l$ is huge as compared to everything else)\blueend\fi.
  If $\tau$ is any stopping time
  (we will be interested in $\tau=t_i$ for various $i$),
  set, in the notation of (\ref{eq:A}),
  \begin{equation*}
    A^{n,\tau}_t(\omega)
    :=
    \sum_{k=0}^{\infty}
    \left(
      \omega(\tau\vee T^n_k\wedge t)
      -
      \omega(\tau\vee T^n_{k-1}\wedge t)
    \right)^2,
    \quad
    t\ge\tau,
    \quad
    n=1,2,\ldots
  \end{equation*}
  (we omit parentheses in expressions of the form $x\vee y\wedge z$
  since $(x\vee y)\wedge z=x\vee(y\wedge z)$,
  provided $x\le z$).
  The intuition is that $A^{n,\tau}_t(\omega)$
  is the version of $A^{n}_t(\omega)$ that starts at time $\tau$ rather than $0$.

  For $i=0,1,\ldots,2^{d+m}-1$,
  let $\mathfrak{E}_i$ be the event that
  $t_i<\infty$ implies that (\ref{eq:increasing}),
  with $\alpha$ replaced by $\gamma>0$
  and $A^n_t$ replaced by $A^{n,t_i}_t$,
  holds for all $n=1,2,\ldots$ and $t\in[t_i,\infty)$.
  Applying a trading strategy
  similar to that used in the proof of Lemma \ref{lem:quadratic-variation}
  but starting at time $t_i$ rather than $0$,
  we can see that the lower price of $\mathfrak{E}_i$
  is at least $1-\gamma$.
  The inequality
  \begin{equation*}
    A^{n,t_i}_t
    -
    A^{n-1,t_i}_t
    \ge
    -2^{-n}
    A^{n,t_i}_t
    -
    2^{-2n+2}
    -
    \frac{n+\ln\frac{2}{\gamma}}{2^n}
  \end{equation*}
  holds for all $t\in[t_i,t_{i+1}]$ and all $n$
  on the event $\{t_i<\infty\}\cap\mathfrak{E}_i$.
  For the value $t:=t_{i+1}$ this inequality implies
  \begin{equation*}
    A^{n,t_i}_{t_{i+1}}
    \ge
    \frac{1}{1+2^{-n}}
    \left(
      A^{n-1,t_i}_{t_{i+1}}
      -
      2^{-2n+2}
      -
      \frac{n+\ln\frac{2}{\gamma}}{2^n}
    \right)
  \end{equation*}
  (including the case $t_{i+1}=\infty$).
  Applying the last inequality to $n=l+1,l+2,\ldots$
  (where $l$ will be chosen later),
  we obtain that
  \begin{equation}\label{eq:from-below}
    A^{\infty,t_i}_{t_{i+1}}
    \ge
    \left(
      \prod_{n=l+1}^{\infty} \frac{1}{1+2^{-n}}
    \right)
    A^{l,t_i}_{t_{i+1}}
    -
    \sum_{n=l+1}^{\infty}
    \left(
      2^{-2n+2}
      +
      \frac{n+\ln\frac{2}{\gamma}}{2^n}
    \right)
  \end{equation}
  holds on the whole of $\{t_i<\infty\}\cap\mathfrak{E}_i$
  except perhaps a null set.
  The qualification ``except a null set''
  allows us not only to assume that 
  $A^{\infty,t_i}_{t_{i+1}}$
  exists in (\ref{eq:from-below})
  but also to assume that
  $A^{\infty,t_i}_{t_{i+1}}=A_{t_{i+1}}-A_{t_i}=2^{-m}$.
  Let $\gamma:=\frac{1}{3}2^{-d-m}\beta$
  and choose $l=l(m)$ so large
  that (\ref{eq:from-below}) implies
  $A^{l,t_i}_{t_{i+1}}\le2^{-m+1/2}$
  (this can be done as both the product and the sum in (\ref{eq:from-below})
  are convergent,
  and so the product can be made arbitrarily close to $1$
  and the sum can be made arbitrarily close to $0$).
  Doing this for all $i=0,1,\ldots,2^{d+m}-1$ will ensure
  that the lower price of
  \begin{equation}\label{eq:bound}
    t_i<\infty
    \Longrightarrow
    A^{l,t_i}_{t_{i+1}}\le2^{-m+1/2},
    \quad
    i=0,1,\ldots,2^{d+m}-1,
  \end{equation}
  is at least $1-\beta/3$.

  An important observation for what follows is
  that the process defined as $(\omega(t)-\omega(t_i))^2-A^{l,t_i}_t$ for $t\ge t_i$
  and as $0$ for $t<t_i$
  is a simple capital process
  (corresponding to betting $2(\omega(T^l_k)-\omega(t_i))$
  at each time $T^l_k>t_i$).
  Now we can see that
  \begin{equation}\label{eq:2-variation}
    \sum_{i=1,\ldots,2^{d+m}:t_i<\infty}
    (\omega_i - \omega_{i-1})^2
    \le
    2^{1/2}\frac{3}{\beta}S
  \end{equation}
  will hold on the event (\ref{eq:bound}),
  except for a set of $\omega$ of upper price at most $\beta/3$:
  indeed, there is a positive simple capital process
  taking value at least
  $
    2^{1/2}S
    +
    \sum_{i=1}^{j}
    (\omega_i - \omega_{i-1})^2
    -
    j2^{-m+1/2}
  $
  on the conjunction of events (\ref{eq:bound}) and $t_j<\infty$
  at time $t_j$,
  $j=0,1,\ldots,2^{d+m}$,
  and this simple capital process
  will make at least $2^{1/2}\frac{3}{\beta}S$ at time $\tau_S$
  (in the sense of $\liminf$ if $\tau_S=\infty$)
  out of initial capital $2^{1/2}S$
  if (\ref{eq:bound}) happens but (\ref{eq:2-variation}) fails to happen.

  For each $\omega\in\Omega$,
  define
  \begin{equation*} 
    J(\omega)
    :=
    \left\{
      i=1,\ldots,2^{d+m}:
      t_i<\infty
      \And
      \lvert\omega_i-\omega_{i-1}\rvert\ge\epsilon
    \right\},
  \end{equation*}
  where $\epsilon=\epsilon_m$ will be chosen later.
  It is clear that $\lvert J(\omega)\rvert\le2^{1/2}3S/\beta\epsilon^2$
  on the set (\ref{eq:2-variation}).
  Consider the simple trading strategy
  whose capital increases by $(\omega(t_i)-\omega(\tau))^2-A^{l,\tau}_{t_i}$
  between each time $\tau\in[t_{i-1},t_i]\cap[0,\infty)$
  when $\lvert\omega(\tau)-\omega_{i-1}\rvert=\epsilon$
  for the first time during $[t_{i-1},t_i]\cap[0,\infty)$
  (this is guaranteed to happen when $i\in J(\omega)$)
  and the corresponding time $t_i$,
  $i=1,\ldots,2^{d+m}$,
  and which is not active (i.e., sets the bet to $0$) otherwise.
  (Such a strategy exists, as explained in the previous paragraph.)
  This strategy will make at least $\epsilon^2$
  out of $(2^{1/2}3S/\beta\epsilon^2)2^{-m+1/2}$
  provided all three of the events (\ref{eq:bound}), (\ref{eq:2-variation}),
  and
  \begin{equation*} 
    \exists i\in\{1,\ldots,2^{d+m}\}:
    t_i<\infty
    \And
    \lvert\omega_i-\omega_{i-1}\rvert\ge2\epsilon
  \end{equation*}
  happen.
  (And we can make the corresponding simple capital process positive
  by being active for at most $2^{1/2}3S/\beta\epsilon^2$ values of $i$
  and setting the bet to $0$ as soon as (\ref{eq:bound}) becomes violated.)
  This corresponds to making
  at least $1$ out of $(2^{1/2}3S/\beta\epsilon^4)2^{-m+1/2}$.
  Solving the equation $(2^{1/2}3S/\beta\epsilon^4)2^{-m+1/2}=\beta/3$ in $\epsilon$
  gives $\epsilon=(2\times3^2S2^{-m}/\beta^2)^{1/4}$.
  Therefore,
  \begin{multline}\label{eq:max}
    \max_{i=1,\ldots,2^{d+m}:t_i<\infty}
    \lvert\omega_i-\omega_{i-1}\rvert
    \le
    2\epsilon
    =
    2(2\times3^2S2^{-m}/\beta^2)^{1/4}\\
    =
    2^{5/4} 3^{1/2}
    \left(
      2^{1/4} - 1
    \right)^{-1/2}
    \alpha^{-1/2}
    S^{1/4}
    2^{-m/8}
  \end{multline}
  except for a set of $\omega$ of upper price $\beta$.
  By the countable subadditivity of upper price
  (Lemma~\ref{lem:subadditivity}),
  (\ref{eq:max}) holds for all $m=1,2,\ldots$
  except for a set of $\omega$ of upper price at most
  $\sum_m\beta_m=\alpha$.

  We have now allowed $m$ to vary and so will write $t^m_i$
  instead of $t_i$ defined by (\ref{eq:t}).
  Fix an $\omega\in\Omega$ satisfying $A(\omega)\in\Omega$
  and (\ref{eq:max}) for $m=1,2,\ldots$\,.
  Intervals of the form $[t^m_{i-1}(\omega),t^m_i(\omega)]\subseteq[0,\infty)$,
  for $m\in\{1,2,\ldots\}$ and $i\in\{1,2,3,\ldots,2^{d+m}\}$,
  will be called \emph{predyadic}
  (\emph{of order $m$}).
  Given an interval $[s_1,s_2]\subseteq[0,S]$ of length at most $\delta\in(0,1)$
  and with $\tau_{s_2}<\infty$,
  we can cover $(\tau_{s_1}(\omega),\tau_{s_2}(\omega))$
  (without covering any points
  in the complement of $[\tau_{s_1}(\omega),\tau_{s_2}(\omega)]$)
  by adjacent predyadic intervals with disjoint interiors
  such that, for some $m\in\{1,2,\ldots\}$:
  there are between one and two predyadic intervals of order $m$;
  for $i=m+1,m+2,\ldots$,
  there are at most two predyadic intervals of order $i$
  (start from finding the point in $[s_1,s_2]$ of the form $j2^{-k}$
  with integer $j$ and $k$ and the smallest possible $k$, and cover
  $(\tau_{s_1}(\omega),\tau_{j2^{-k}}]$ and $[\tau_{j2^{-k}},\tau_{s_2}(\omega))$
  by predyadic intervals in the greedy manner).
  Combining (\ref{eq:max}) and
  $
    2^{-m}\le\delta
  $,
  we obtain
  \begin{align*}
    \left|
      \omega
      \left(
        \tau_{s_2}
      \right)
      -
      \omega
      \left(
        \tau_{s_1}
      \right)
    \right|
    &\le
    2^{9/4}3^{1/2}
    \left(
      2^{1/4} - 1
    \right)^{-1/2}
    \alpha^{-1/2}
    S^{1/4}\\
    &\quad\times
    \left(
      2^{-m/8}
      +
      2^{-(m+1)/8}
      +
      2^{-(m+2)/8}
      +
      \cdots
    \right)\\
    &=
    2^{9/4}3^{1/2}
    \left(
      2^{1/4} - 1
    \right)^{-1/2}
    \left(
      1 - 2^{-1/8}
    \right)^{-1}
    \alpha^{-1/2}
    S^{1/4}
    2^{-m/8}\\
    &\le
    2^{9/4}3^{1/2}
    \left(
      2^{1/4} - 1
    \right)^{-1/2}
    \left(
      1 - 2^{-1/8}
    \right)^{-1}
    \alpha^{-1/2}
    S^{1/4}
    \delta^{1/8},
  \end{align*}
  which is stronger than (\ref{eq:modulus-P})
  (as
  $
    2^{9/4}3^{1/2}
    \left(
      2^{1/4} - 1
    \right)^{-1/2}
    \left(
      1 - 2^{-1/8}
    \right)^{-1}
    \approx
    228.22
  $).
\end{proof}

Now we can prove the following elaboration of Lemma \ref{lem:modulus-P},
which will be used in the next two sections.
\begin{lemma}\label{lem:super-modulus-P}
  For each $\alpha>0$,
  \begin{multline}\label{eq:super-modulus-P}
    \LowProb
    \bigl(
      \forall S\in\{1,2,4,\ldots\}\;
      \forall\delta\in(0,1)\;
      \forall s_1,s_2\in[0,S]:\\
      \left(
        0\le s_2-s_1\le\delta
        \And
        \tau_{s_2}<\infty
      \right)\\
      \Longrightarrow
      \left|
        \omega(\tau_{s_2})
        -
        \omega(\tau_{s_1})
      \right|
      \le
      430\,
      \alpha^{-1/2}
      S^{1/2}
      \delta^{1/8}
    \bigr)
    \ge
    1-\alpha.
  \end{multline}
\end{lemma}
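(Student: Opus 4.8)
The plan is to deduce (\ref{eq:super-modulus-P}) from Lemma~\ref{lem:modulus-P} by a union bound over the horizons $S\in\{1,2,4,\ldots\}$, allocating a geometrically decreasing share of the total error probability $\alpha$ to each $S$.

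First I would fix $\alpha>0$ and, for each $d\in\{0,1,2,\ldots\}$, put $S:=2^d$ and $\alpha_S:=(1-2^{-1/2})\,2^{-d/2}\,\alpha$, so that $\sum_{d\ge0}\alpha_S=\alpha$. Applying Lemma~\ref{lem:modulus-P} with $\alpha_S$ in place of $\alpha$ and this value of $S$, and calling $E_S$ the event inside $\LowProb$ in (\ref{eq:modulus-P}), gives $\UpProb(E_S^c)\le\alpha_S$. Countable subadditivity of outer content (Lemma~\ref{lem:subadditivity}) then yields $\UpProb\bigl(\bigcup_S E_S^c\bigr)\le\sum_S\alpha_S=\alpha$, hence $\LowProb\bigl(\bigcap_S E_S\bigr)=1-\UpProb\bigl(\bigcup_S E_S^c\bigr)\ge1-\alpha$.

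Next I would check that on $\bigcap_S E_S$ the bound in (\ref{eq:super-modulus-P}) holds. For this it suffices that $230\,\alpha_S^{-1/2}S^{1/4}\le430\,\alpha^{-1/2}S^{1/2}$ for every $S=2^d$. Plugging in $\alpha_S=(1-2^{-1/2})2^{-d/2}\alpha$ turns the left-hand side into $230\,(1-2^{-1/2})^{-1/2}\alpha^{-1/2}S^{1/2}$ (the two factors $2^{d/4}$ combine into $2^{d/2}=S^{1/2}$), and one checks numerically that $230\,(1-2^{-1/2})^{-1/2}<430$. Thus on $\bigcap_S E_S$, for every $S\in\{1,2,4,\ldots\}$, every $\delta\in(0,1)$ and every $s_1,s_2\in[0,S]$ with $0\le s_2-s_1\le\delta$ and $\tau_{s_2}<\infty$, we have $\lvert\omega(\tau_{s_2})-\omega(\tau_{s_1})\rvert\le 430\,\alpha^{-1/2}S^{1/2}\delta^{1/8}$; since $\bigcap_S E_S$ is contained in the event inside $\LowProb$ in (\ref{eq:super-modulus-P}) and inner content is monotone, (\ref{eq:super-modulus-P}) follows.

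The only delicate point is the choice of the weights $\alpha_S$: they must be summable (forcing decay at least like $2^{-\epsilon d}$ for some $\epsilon>0$) while keeping $\alpha_S^{-1/2}S^{1/4}$ from growing faster than $S^{1/2}$ (forcing decay no faster than $2^{-d/2}$), so $\alpha_S\propto2^{-d/2}$ is essentially forced, and the jump from the constant $230$ to $430$ is exactly the room needed to absorb the normalizing factor $(1-2^{-1/2})^{-1/2}$. Beyond this bookkeeping I do not expect any real obstacle.
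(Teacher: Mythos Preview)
Your proof is correct and essentially identical to the paper's: the same geometric weights $\alpha_S=(1-2^{-1/2})S^{-1/2}\alpha$, the same application of Lemma~\ref{lem:modulus-P} followed by countable subadditivity (Lemma~\ref{lem:subadditivity}), and the same check that $230(1-2^{-1/2})^{-1/2}<430$. Your closing remark explaining why the decay rate $2^{-d/2}$ is essentially forced is a nice addition not present in the paper.
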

\begin{proof}
  Replacing $\alpha$ in (\ref{eq:modulus-P}) by
  $\alpha_S:=(1-2^{-1/2})S^{-1/2}\alpha$ for $S=1,2,4,\ldots$
  (where $1-2^{-1/2}$ is the normalizing constant
  ensuring that the $\alpha_S$ sum to $\alpha$ over $S$),
  we obtain
  \begin{multline*}
    \LowProb
    \bigl(
      \forall\delta\in(0,1)\;
      \forall s_1,s_2\in[0,S]:
      \left(
        0\le s_2-s_1\le\delta
        \And
        \tau_{s_2}<\infty
      \right)\\
      \Longrightarrow
      \left|
        \omega(\tau_{s_2})
        -
        \omega(\tau_{s_1})
      \right|
      \le
      230\,
      (1-2^{-1/2})^{-1/2}
      \alpha^{-1/2}
      S^{1/2}
      \delta^{1/8}
    \bigr)\\
    \ge
    1-(1-2^{-1/2})S^{-1/2}\alpha.
  \end{multline*}
  The countable subadditivity of upper price now gives
  \begin{multline*}
    \LowProb
    \bigl(
      \forall S\in\{1,2,4,\ldots\}\;
      \forall\delta\in(0,1)\;
      \forall s_1,s_2\in[0,S]:\\
      \left(
        0\le s_2-s_1\le\delta
        \And
        \tau_{s_2}<\infty
      \right)
      \Longrightarrow{}\\
      \left|
        \omega(\tau_{s_2})
        -
        \omega(\tau_{s_1})
      \right|
      \le
      230\,
      (1-2^{-1/2})^{-1/2}
      \alpha^{-1/2}
      S^{1/2}
      \delta^{1/8}
    \bigr)
    \ge
    1-\alpha,
  \end{multline*}
  which is stronger than (\ref{eq:super-modulus-P})
  (as
  $
    230\,
    (1-2^{-1/2})^{-1/2}
    \approx
    424.98
  $).
\end{proof}

The following lemma develops inequality (\ref{eq:2-variation})
and will be useful in the proof of Theorem \ref{thm:main-constructive}.
\begin{lemma}\label{lem:super-2-variation}
  For each $\alpha>0$,
  \begin{multline}\label{eq:super-2-variation}
    \LowProb
    \Biggl(
      \forall S\in\{1,2,4,\ldots\}\;
      \forall m\in\{1,2,\ldots\}:\\[-3mm]
      \sum_{i=1,\ldots,S2^m:t_i<\infty}
      \Bigl(
        \omega(t_i)
        -
        \omega(t_{i-1})
      \Bigr)^2
      \le
      64\,
      \alpha^{-1}
      S^2
      2^{m/16}
    \Biggr)
    \ge
    1-\alpha,
  \end{multline}
  in the notation of (\ref{eq:t}).
\end{lemma}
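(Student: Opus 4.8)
The plan is to read this lemma as the version of inequality~(\ref{eq:2-variation}) from the proof of Lemma~\ref{lem:modulus-P} that is uniform in $S$ and $m$, and to obtain it by running the argument behind~(\ref{eq:2-variation}) once for each pair $(S,m)$ with $S=2^{d}\in\{1,2,4,\ldots\}$, $m\in\{1,2,\ldots\}$, spreading the total error $\alpha$ over these pairs. Concretely, I would allocate to the pair $(S,m)$ the budget $\lambda_{S,m}:=c_{0}\,S^{-1}2^{-m/16}\alpha$, with $c_{0}$ fixed by $\sum_{d\ge0}\sum_{m\ge1}\lambda_{S,m}=c_{0}\alpha\bigl(\sum_{d\ge0}2^{-d}\bigr)\bigl(\sum_{m\ge1}2^{-m/16}\bigr)=\alpha$, i.e.\ $c_{0}=\tfrac12\bigl(2^{1/16}-1\bigr)$; note $1/c_{0}=2/(2^{1/16}-1)<64$, which is where the constant $64$ in~(\ref{eq:super-2-variation}) comes from.

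For a fixed pair $(S,m)$ with $S=2^{d}$ I would repeat, almost verbatim, the portion of the proof of Lemma~\ref{lem:modulus-P} running from the definition of the events $\mathfrak{E}_{i}$ (keeping its parameter $\gamma$ free) through inequality~(\ref{eq:2-variation}), but tuning the free parameters. First, take the auxiliary index $l=l(S,m)$ large enough that~(\ref{eq:bound}) sharpens to $A^{l,t_{i}}_{t_{i+1}}\le2^{-m}(1+\epsilon)$ for a prescribed small $\epsilon>0$, and take $\gamma$ small enough that~(\ref{eq:bound}) fails only on a set of outer content at most $\epsilon'\lambda_{S,m}$. Then use the positive simple capital process constructed there, started from initial capital $S(1+\epsilon)$: it stays positive because the total subtracted increment is at most $S2^{m}\cdot2^{-m}(1+\epsilon)=S(1+\epsilon)$, and at time $\tau_{S}$ (interpreting its value as $\liminf$ when $\tau_{S}=\infty$) it is at least $\sum_{i=1,\ldots,S2^{m}:\,t_{i}<\infty}(\omega(t_{i})-\omega(t_{i-1}))^{2}$, in the notation of~(\ref{eq:t}). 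Hence, by the definition of outer content, on the event~(\ref{eq:bound}) this sum exceeds $S(1+\epsilon)/\bigl((1-\epsilon')\lambda_{S,m}\bigr)$ only on a set of outer content at most $(1-\epsilon')\lambda_{S,m}$; combining with the exceptional set of~(\ref{eq:bound}) itself, the sum exceeds $S(1+\epsilon)/\bigl((1-\epsilon')\lambda_{S,m}\bigr)$ on a set of outer content at most $\epsilon'\lambda_{S,m}+(1-\epsilon')\lambda_{S,m}=\lambda_{S,m}$, and since $\tfrac{1+\epsilon}{(1-\epsilon')c_{0}}\le64$ once $\epsilon,\epsilon'$ are small enough, that threshold is $\le64\,\alpha^{-1}S^{2}2^{m/16}$.

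It then remains to assemble. By countable subadditivity of outer content (Lemma~\ref{lem:subadditivity}), the set of $\omega$ for which the bound above fails for at least one pair $(S,m)$ has outer content at most $\sum_{S,m}\lambda_{S,m}=\alpha$; its complement is contained in the event inside~(\ref{eq:super-2-variation}), so by the definition $\LowProb(E)=1-\UpProb(E^{c})$ the estimate~(\ref{eq:super-2-variation}) follows. As in the proof of Lemma~\ref{lem:modulus-P}, everything is carried out on the full event that $A(\omega)\in\Omega$ exists (Lemma~\ref{lem:convergence}), which is what justifies $A^{\infty,t_{i-1}}_{t_{i}}=A_{t_{i}}-A_{t_{i-1}}=2^{-m}$. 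I expect the main obstacle to be the constant bookkeeping in the middle step: used off the shelf, inequality~(\ref{eq:2-variation}) carries a per-pair constant of about $2\sqrt2$, and because $\sum_{m\ge1}2^{-m/16}\approx22.6$ is sizeable this would overshoot the budget by a bounded factor; the gap is closed only by taking $l$ arbitrarily large (which turns the $\sqrt2$ of~(\ref{eq:bound}) into $1+o(1)$) and by making the two error sources --- failure of~(\ref{eq:bound}) versus failure of the capital-process bound --- of very unequal size, so that the effective constant falls below $64c_{0}\approx1.4$.
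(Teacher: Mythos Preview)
Your approach is correct and matches the paper's: allocate budget $\lambda_{S,m}=\tfrac12(2^{1/16}-1)\,S^{-1}2^{-m/16}\alpha$ per pair $(S,m)$ and combine via Lemma~\ref{lem:subadditivity}. The paper is terser---it simply substitutes $\beta/3:=\lambda_{S,m}$ into~(\ref{eq:2-variation}) to obtain the constant $2^{3/2}(2^{1/16}-1)^{-1}\approx63.9<64$; your worry that the off-the-shelf version overshoots is already handled by making the (\ref{eq:bound})-error negligibly small (your choice of tiny $\epsilon'$), so the additional sharpening of (\ref{eq:bound}) from $2^{-m+1/2}$ to $2^{-m}(1+\epsilon)$ is not needed, though it does no harm and yields a slightly better constant.
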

\begin{proof}
  Replacing $\beta/3$ in (\ref{eq:2-variation})
  with $2^{-1}(2^{1/16}-1)S^{-1}2^{-m/16}\alpha$,
  where $S$ ranges over $\{1,2,4,\ldots\}$
  and $m$ over $\{1,2,\ldots\}$,
  we obtain
  \begin{multline*}
    \LowProb
    \Biggl(
      \sum_{i=1,\ldots,S2^m:t_i<\infty}
      \Bigl(
        \omega(t_i)
        -
        \omega(t_{i-1})
      \Bigr)^2\\
      \le
      2^{3/2}(2^{1/16}-1)^{-1}
      \alpha^{-1}
      S^2
      2^{m/16}
    \Biggr)
    \ge
    1-2^{-1}(2^{1/16}-1)S^{-1}2^{-m/16}\alpha.
  \end{multline*}
  By the countable subadditivity of upper price this implies
  \begin{multline*}
    \LowProb
    \Biggl(
      \forall S\in\{1,2,4,\ldots\}\;
      \forall m\in\{1,2,\ldots\}:
      \sum_{i=1,\ldots,S2^m:t_i<\infty}
      \Bigl(
        \omega(t_i)
        -
        \omega(t_{i-1})
      \Bigr)^2\\
      \le
      2^{3/2}(2^{1/16}-1)^{-1}
      \alpha^{-1}
      S^2
      2^{m/16}
    \Biggr)
    \ge
    1-\alpha,
  \end{multline*}
  which is stronger than (\ref{eq:super-2-variation})
  (as
  $
    2^{3/2}(2^{1/16}-1)^{-1}
    \approx
    63.88
  $).
\end{proof}

The following lemma completes the proof of Theorem \ref{thm:main-constructive}(a).
\begin{lemma}\label{lem:same-intervals}
  For typical $\omega$,
  $A(\omega)$ has the same intervals of constancy as $\omega$.
\end{lemma}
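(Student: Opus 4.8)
The plan is to establish, for typical $\omega$, each of the two inclusions between the families of intervals of constancy of $\omega$ and of $A(\omega)$; throughout I work on the full event $\{A(\omega)\in\Omega\}$ of Lemma~\ref{lem:convergence}. That constancy of $\omega$ on $[t_1,t_2]$ implies constancy of $A(\omega)$ there is immediate and pointwise: since consecutive stopping times $T^n_k(\omega)$ carry distinct values of $\omega$, at most one of them lies in $[t_1,t_2]$ when $\omega$ is constant there, and then a direct inspection of~(\ref{eq:A}) — using $\omega(t)=\omega(t_1)=\omega(t_2)$ for all $t\in[t_1,t_2]$ — shows that every summand of $A^n_t(\omega)$ takes the same value for all $t\in[t_1,t_2]$, so $A^n(\omega)$, and hence $A(\omega)$, is constant on $[t_1,t_2]$. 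For the converse it suffices, by continuity of $\omega$ and of $A$ together with countable subadditivity (Lemma~\ref{lem:subadditivity}), to prove that for all rationals $r<r'$ and all rational $\epsilon>0$ the event $E:=\{A_r=A_{r'}\}\cap\{|\omega(r')-\omega(r)|\ge\epsilon\}$ is null: a non-constant $\omega$ on some interval of constancy of $A$ supplies, by continuity, such $r,r',\epsilon$ with $r,r'$ inside that interval, so that $A_r=A_{r'}$ holds automatically.

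To bound $\UpProb(E)$ I will use the quadratic-variation capital processes: for a stopping time $\tau$ and a grid level $l$, the process equal to $(\omega(t)-\omega(\tau))^2-A^{l,\tau}_t$ for $t\ge\tau$ and to $0$ before $\tau$ is a simple capital process (the bet at $T^l_k>\tau$ being $2(\omega(T^l_k)-\omega(\tau))$, exactly as in the proofs of Lemmas~\ref{lem:quadratic-variation} and~\ref{lem:modulus-P}), and stopping it when $A^{l,\tau}$ first reaches a prescribed level turns it into a positive capital process. On $E$ the function $A$ is constant on $[r,r']$, so by the localized form of the estimate in Lemma~\ref{lem:quadratic-variation} (as deployed in the proof of Lemma~\ref{lem:modulus-P}) one has $A^{l,\tau}_t-(A_t-A_\tau)\to0$ geometrically as $l\to\infty$ for every rational $\tau\in[r,r']$, and in particular $A^{l,r}_{r'}\to0$ on $E$. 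The idea is to cut $[r,r']$ into an infinite nested family of adaptively chosen subintervals $r=t_0<t_1<t_2<\cdots\le r'$, with $t_k:=\inf\{t>t_{k-1}\st|\omega(t)-\omega(t_{k-1})|=\delta_k\}\wedge r'$ for a suitable sequence $\delta_k\downarrow0$, to run on $[t_{k-1},t_k]$ the quadratic-variation process for a grid level $l_k\uparrow\infty$ (stopped when $A^{l_k,t_{k-1}}$ reaches $\delta_k^2/2$), and to reinvest the current capital multiplicatively from one subinterval to the next. On $E$, once $k$ is large enough that $A^{l_k,t_{k-1}}_{t_k}<\delta_k^2/2$ — which, by the geometric convergence, holds for all large $k$ — the $k$-th stage multiplies the capital by at least $1+\delta_k^2/2$; choosing the $\delta_k$ with $\sum_k\delta_k^2=\infty$ then drives the capital to infinity on $E$, so $\UpProb(E)=0$.

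I expect the amplification in this last step to be the main obstacle. A single scale only yields a bounded multiplicative gain on $E$, and one cannot merely add capital processes over infinitely many scales, since their initial capitals must be summable, which destroys the divergence; hence the multiplicative, subinterval-by-subinterval bookkeeping. The genuinely delicate point is that the construction must work uniformly over $E$, in particular over the ``borderline'' paths — a segment of finite variation keeps $A$ flat while $\omega$ moves — so the $\delta_k$ have to be chosen with $\sum_k\delta_k=\infty$ as well (so that a path admitting only finitely many of the subintervals $[t_{k-1},t_k]$ before time $r'$ has moved too little on $[r,r']$ to lie in $E$), in balance with the requirement $\sum_k\delta_k^2=\infty$, and the $\omega$-dependence of the rate at which $A^{l,\tau}_t$ approaches $A_t-A_\tau$ must be absorbed into $\UpProb$-estimates so that the finitely many ``premature'' stages cost at most a bounded factor. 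Making all of this quantitative — leaving no escape route for a path that oscillates just enough to keep $A$ non-flat yet just little enough to slip past every finite stage of the construction — is where the real work lies.
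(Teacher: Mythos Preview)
Your easy direction and your reduction to the events $E=\{A_r=A_{r'}\}\cap\{|\omega(r')-\omega(r)|\ge\epsilon\}$ for rational $r,r',\epsilon$ are correct and match the paper's (the paper additionally localises to $[r,r']\subseteq[0,\tau_S]$, which is recovered by a further countable union over $S$). The multiplicative-reinvestment scheme for the hard direction, however, has a structural obstruction, not merely a technical one. The condition that guarantees every $\omega\in E$ produces infinitely many completed stages (i.e., $t_k<r'$ for all $k$) is $\sum_k\delta_k\le\epsilon$, not $\sum_k\delta_k=\infty$; you have the direction reversed. If $t_K=r'$ is the first incomplete stage then the triangle inequality gives $|\omega(r')-\omega(r)|\le\sum_{k\le K}\delta_k$, and you want this to contradict $\omega\in E$, which requires $\sum_{k\le K}\delta_k<\epsilon$ for every $K$; a monotone $\omega$ moving by exactly $\epsilon$ on $[r,r']$ shows this is also necessary. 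But $\sum_k\delta_k\le\epsilon$ forces $\sum_k\delta_k^2<\infty$, so $\prod_k(1+\delta_k^2/2)<\infty$ and your capital stays bounded. Rescaling so that each successful stage doubles the capital does yield divergence after infinitely many successes, but then a single premature stop at $A^{l_k,t_{k-1}}=\delta_k^2/2$ can drive the capital to zero, and you have no uniform-in-$\omega$ control over which stages this happens to. The two requirements---infinitely many completed stages for every $\omega\in E$, and a product of stage-factors that both diverges and survives any finite set of failures---cannot be reconciled in this setup.

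The paper sidesteps the whole difficulty by not attempting to make one process diverge on $E$: it shows $\UpProb(E)\le\alpha$ for each $\alpha>0$ with a capital process that is allowed to depend on $\alpha$. Fix $\alpha$, choose $m$ with $2^{-m+1/2}/\epsilon^2\le\alpha/2$, and select a \emph{single} grid level $l=l(m)$ (exactly as in the proof of Lemma~\ref{lem:modulus-P}) so that, off a set of outer content at most $\alpha/2$, one has $A^{l,r}_t\le2^{-m+1/2}$ throughout $[r,r']$ whenever $A$ is constant there. The one-stage positive simple capital process $2^{-m+1/2}+(\omega(t)-\omega(r))^2-A^{l,r}_t$, stopped at $r'\wedge\tau_S$, at $A^{l,r}_t=2^{-m+1/2}$, or at $|\omega(t)-\omega(r)|=\epsilon$, then turns initial capital $2^{-m+1/2}$ into at least $\epsilon^2$ on the intersection of $E$ with the good set, giving $\UpProb(E)\le\alpha/2+\alpha/2=\alpha$. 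Since $\alpha$ is arbitrary, $\UpProb(E)=0$. The idea you are missing is that a single stage, tuned after fixing $\alpha$, already suffices; the infinite product machinery is unnecessary and, as you set it up, cannot be made to work.
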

\begin{proof}
  The definition of $A$ immediately implies
  that $A(\omega)$ is always constant
  on every interval of constancy of $\omega$
  (provided $A(\omega)$ exists).
  Therefore,
  we are only required to prove that
  typical $\omega$ are constant on every interval of constancy of $A(\omega)$.

  The proof can be extracted from the proof of Lemma \ref{lem:modulus-P}.
  It suffices to prove that,
  for any $\alpha>0$, $S\in\{1,2,4,\ldots\}$,
  rational $c>0$,
  and interval $[a,b]$ with rational end-points $a$ and $b$
  such that $a<b$,
  the upper price of the following event is at most $\alpha$:
  $\omega$ changes by at least $c$ over $[a,b]$,
  $A$ is constant over $[a,b]$, and $[a,b]\subseteq[0,\tau_S]$.
  Fix such $\alpha$, $S$, $c$, and $[a,b]$,
  and let $E$ stand for the event described in the previous sentence.
  Choose $m\in\{1,2,\ldots\}$ such that $2^{-m+1/2}/c^2\le\alpha/2$
  and choose the corresponding $l=l(m)$
  as in the proof of Lemma \ref{lem:modulus-P}
  but with $1-\beta/3$ replaced by $1-\alpha/2$
  (cf.\ (\ref{eq:bound})).
  The positive simple capital process
  $2^{-m+1/2}+(\omega(t)-\omega(a))^2-A^{l,a}_t$,
  started at time $a$
  and stopped when $t$ reaches $b\wedge\tau_S$,
  when $A^{l,a}_t$ reaches $2^{-m+1/2}$,
  or when $\lvert\omega(t)-\omega(a)\rvert$ reaches $c$,
  whatever happens first,
  makes $c^2$ out of $2^{-m+1/2}$
  on the conjunction of (\ref{eq:bound}) and the event $E$.
  Therefore, the upper price of the conjunction is at most $\alpha/2$,
  and the upper price of $E$ is at most $\alpha$.
\end{proof}

In view of Lemma \ref{lem:same-intervals}
we can strengthen (\ref{eq:super-modulus-P}) to
\begin{multline*}
  \LowProb
  \bigl(
    \forall S\in\{1,2,4,\ldots\}\;
    \forall\delta\in(0,1)\;
    \forall t_1,t_2\in[0,\infty):\\
    \bigl(
      \lvert A_{t_2}-A_{t_1}\rvert\le\delta
      \And
      A_{t_1}\in[0,S]
      \And
      A_{t_2}\in[0,S]
    \bigr)
    \Longrightarrow\\
    \left|
      \omega(t_2)
      -
      \omega(t_1)
    \right|
    \le
    430\,
    \alpha^{-1/2}
    S^{1/2}
    \delta^{1/8}
  \bigr)
  \ge
  1-\alpha.
\end{multline*}

\section{Proof of the remaining parts of Theorems \ref{thm:main-constructive}(b)
and~\ref{thm:supermain-constructive}}
\label{sec:proof-constructive-b}

Let $c\in\bbbr$ be a fixed constant.
Results of the previous section imply
the tightness of $\LowQ_c$
(for details, see below).
\begin{lemma}\label{lem:tight}
  For each $\alpha>0$ there exists a compact set $\mathfrak{K}\subseteq\Omega$
  such that $\LowQ_c(\mathfrak{K})\ge1-\alpha$.
\end{lemma}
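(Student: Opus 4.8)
The plan is to read a compact set directly off the modulus-of-continuity estimate of Lemma~\ref{lem:super-modulus-P}. Recall the Arzel\`a--Ascoli characterisation of compactness for the metric (\ref{eq:metric}) on $\Omega=C([0,\infty))$: a closed subset $\mathfrak{K}\subseteq\Omega$ is compact if and only if it is pointwise bounded and equicontinuous on every interval $[0,S]$.

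Fix $\alpha>0$ and let $\mathfrak{K}$ be the set of all $\psi\in\Omega$ with $\psi(0)=c$ such that
\[
  \lvert\psi(s_1)-\psi(s_2)\rvert
  \le
  430\,\alpha^{-1/2}S^{1/2}\lvert s_1-s_2\rvert^{1/8}
\]
for every $S\in\{1,2,4,\ldots\}$ and all $s_1,s_2\in[0,S]$ with $\lvert s_1-s_2\rvert<1$. Each of these conditions, as well as $\psi(0)=c$, is preserved under passage to a limit in $\Omega$, so $\mathfrak{K}$ is closed; it is equicontinuous on each $[0,S]$ with the explicit modulus $\delta\mapsto 430\,\alpha^{-1/2}S^{1/2}\delta^{1/8}$, and, chaining at most $\lceil S\rceil$ increments of length $<1$ starting from $\psi(0)=c$, it is bounded on $[0,S]$. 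Hence $\mathfrak{K}$ is compact.

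It remains to show $\LowQ_c(\mathfrak{K})\ge1-\alpha$. By (\ref{eq:Qc-lower}), the definition of restricted inner content, and (\ref{eq:one}) (which makes the restriction legitimate),
\[
  \LowQ_c(\mathfrak{K})
  =
  1-\UpProb\bigl(\tc^{-1}(\mathfrak{K}^c)\cap\{\omega(0)=c\}\cap\{A_{\infty}=\infty\}\bigr),
\]
so I only need to bound this outer content by $\alpha$. Let $G$ denote the event inside $\LowProb(\cdot)$ in (\ref{eq:super-modulus-P}), so $\UpProb(G^c)\le\alpha$, and let $F$ be the full event (Lemma~\ref{lem:convergence}) on which $A^n\to A$ in $\Omega$. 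On $G\cap F\cap\{\omega(0)=c\}\cap\{A_{\infty}=\infty\}$ the function $A=A(\omega)$ is a continuous increasing element of $\Omega$ with $A_0=0$ and $A_{\infty}=\infty$, so by (\ref{eq:stopping}) every $\tau_s$ ($s\in[0,\infty)$) is finite and $\tau_0=0$; consequently $\tc(\omega)(0)=\omega(\tau_0)=\omega(0)=c$, and, since $\tau_{s_2}<\infty$ for all $s_2$, the estimate defining $G$ becomes exactly the modulus-of-continuity bound defining $\mathfrak{K}$, applied to $\tc(\omega)$. Thus $\tc(\omega)\in\mathfrak{K}$ on this event, i.e.
\[
  \tc^{-1}(\mathfrak{K}^c)\cap\{\omega(0)=c\}\cap\{A_{\infty}=\infty\}\subseteq G^c\cup F^c ,
\]
and by subadditivity (Lemma~\ref{lem:subadditivity}) its outer content is at most $\UpProb(G^c)+\UpProb(F^c)\le\alpha$.

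The one step that needs genuine (if routine) verification — and the only place I expect any obstacle — is the identification, on $\{A_{\infty}=\infty\}\cap F$, of $\tc(\omega)$ with the map $s\mapsto\omega(\tau_s)$ having all $\tau_s<\infty$ and $\tau_0=0$: one has to unwind the definition (\ref{eq:stopping}) of $\tau_s$, using that there $A$ is a continuous increasing function with $A_0=0$ and $A_{\infty}=\infty$, so that the continuity clause in (\ref{eq:stopping}) holds for every $t$ and $\tau_s=\inf\{t:A_t\ge s\}<\infty$ for every $s\in[0,\infty)$, with $\tau_0=0$. Once this is in place, the equicontinuity and base-point conditions cutting out $\mathfrak{K}$ are precisely what Lemma~\ref{lem:super-modulus-P} delivers, and the rest is the standard Arzel\`a--Ascoli argument together with the bookkeeping of restricted outer and inner content.
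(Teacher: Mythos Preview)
Your proof is correct and follows essentially the same approach as the paper: the paper deduces Lemma~\ref{lem:tight} from Lemma~\ref{lem:super-modulus} (which is just Lemma~\ref{lem:super-modulus-P} rephrased as a statement about $\LowQ_c$) together with the Arzel\`a--Ascoli theorem, and you have simply merged these two steps into one argument and spelled out the bookkeeping for $\LowQ_c$ and for $\tau_s$ explicitly.
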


In particular, Lemma~\ref{lem:tight} asserts that $\LowQ_c(\Omega)=1$.
This fact and the results of Section~\ref{sec:coherence} allow us
to check that Theorem~\ref{thm:supermain-constructive}
implies Theorem~\ref{thm:main-constructive}(b).
First, the inequality $\le$ in (\ref{eq:inequality}) implies
\begin{multline*}
  \UpQ_c(E)
  =
  \UpProb(\ntt^{-1}(E);\omega(0)=c,A_{\infty}=\infty)\\
  =
  \UpExpect(\III_E\circ\ntt;\omega(0)=c,A_{\infty}=\infty)
  \le
  \int_{\Omega}
  \III_E
  \dd\Wiener_c
  =
  \Wiener_c(E)
\end{multline*}
for all $E\in\FFF$.
Therefore,
\begin{align}
  \LowQ_c(E)
  &=
  \LowProb(\ntt^{-1}(E);\omega(0)=c,A_{\infty}=\infty)
  \notag\\
  &=
  1-\UpProb
  \left(
    \ntt^{-1}(E^c)\cup\left(\ntt^{-1}(\Omega)\right)^c;
    \omega(0)=c,A_{\infty}=\infty
  \right)
  \notag\\
  &=
  1-\UpProb(\ntt^{-1}(E^c);\omega(0)=c,A_{\infty}=\infty)
  \label{eq:chain-2}\\
  &\ge
  1-\Wiener_c(E^c)
  =
  \Wiener_c(E)
  \notag
\end{align}
and so, by Lemma \ref{lem:lower-upper} and (\ref{eq:one}),
\begin{equation*}
  \UpQ_c(E)=\LowQ_c(E)=\Wiener_c(E)
\end{equation*}
for all $E\in\FFF$.
The equality in line (\ref{eq:chain-2})
follows from $\LowProb(\ntt^{-1}(\Omega);\omega(0)=c,A_{\infty}=\infty)=1$,
which in turn follows from (and is in fact equivalent to) $\LowQ_c(\Omega)=1$.
Therefore, we only need to finish
the proof of  Theorem~\ref{thm:supermain-constructive}.

More precise results than Lemma~\ref{lem:tight} can be stated in terms
of the \emph{modulus of continuity}
of a function $\psi\in\bbbr^{[0,\infty)}$
on an interval $[0,S]\subseteq[0,\infty)$:
\begin{equation*}
  \m^S_{\delta}(\psi)
  :=
  \sup_{s_1,s_2\in[0,S]:\lvert s_1-s_2\rvert\le\delta}
  \lvert\psi(s_1)-\psi(s_2)\rvert,
  \quad
  \delta>0;
\end{equation*}
it is clear that $\lim_{\delta\to0}\m^S_{\delta}(\psi)=0$
if and only if $\psi$ is continuous (equivalently, uniformly continuous) on $[0,S]$.
\begin{lemma}\label{lem:super-modulus}
  For each $\alpha>0$,
  \begin{equation*} 
    \LowQ_c
    \left(
      \forall S\in\{1,2,4,\ldots\}\;
      \forall\delta\in(0,1):
      \m^S_{\delta}
      \le
      430\,
      \alpha^{-1/2}
      S^{1/2}
      \delta^{1/8}
    \right)
    \ge
    1-\alpha.
  \end{equation*}
\end{lemma}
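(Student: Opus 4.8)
The plan is to deduce Lemma~\ref{lem:super-modulus} from Lemma~\ref{lem:super-modulus-P} by unwinding the definition of the pushforward $\LowQ_c=\LowProb(\CDOT;\omega(0)=c,A_\infty=\infty)\tc^{-1}$, with essentially no new estimates. Write $F\subseteq\bbbr^{[0,\infty)}$ for the set of functions $\psi$ satisfying $\m^S_\delta(\psi)\le 430\,\alpha^{-1/2}S^{1/2}\delta^{1/8}$ for all $S\in\{1,2,4,\ldots\}$ and all $\delta\in(0,1)$, and let $G\subseteq\Omega$ be the event inside $\LowProb$ in~(\ref{eq:super-modulus-P}). By the definition of the pushforward and of restricted inner content (recall $\UpProb(\omega(0)=c,A_\infty=\infty)=1$ by~(\ref{eq:one})), $\LowQ_c(F)=\LowProb(\tc^{-1}(F);\omega(0)=c,A_\infty=\infty)=1-\UpProb\bigl((\tc^{-1}(F))^c\cap\{\omega(0)=c\}\cap\{A_\infty=\infty\}\bigr)$, so it suffices to bound this outer content by $\alpha$.

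The one substantive step is the inclusion $G\cap\{A_\infty=\infty\}\cap\{A(\omega)\in\Omega\}\subseteq\tc^{-1}(F)$. To see it, note that on $\{A_\infty=\infty\}\cap\{A(\omega)\in\Omega\}$ the function $A(\omega)$ is continuous, increasing, and diverges to $\infty$, so $\tau_s=\inf\{t:A_t(\omega)\ge s\}<\infty$ for every $s\in[0,\infty)$; hence $\tc(\omega)(s)=\omega(\tau_s)$ for all $s$ (the convention $\omega(\infty):=0$ never intervenes) and $s\mapsto\tau_s$ is increasing. For such $\omega$ one has, for each $S$ and $\delta$, $\m^S_\delta(\tc(\omega))=\sup\{\lvert\omega(\tau_{s_1})-\omega(\tau_{s_2})\rvert:s_1,s_2\in[0,S],\ \lvert s_1-s_2\rvert\le\delta\}$; writing any such pair with $s_1\le s_2$, so that $0\le s_2-s_1\le\delta$ and $\tau_{s_2}<\infty$, the membership $\omega\in G$ gives $\lvert\omega(\tau_{s_2})-\omega(\tau_{s_1})\rvert\le 430\,\alpha^{-1/2}S^{1/2}\delta^{1/8}$, whence $\m^S_\delta(\tc(\omega))\le 430\,\alpha^{-1/2}S^{1/2}\delta^{1/8}$ and $\tc(\omega)\in F$. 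Taking complements yields $(\tc^{-1}(F))^c\cap\{A_\infty=\infty\}\subseteq G^c\cup\{A(\omega)\notin\Omega\}$.

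To conclude, recall that $\{A(\omega)\notin\Omega\}$ is null (Lemma~\ref{lem:convergence}), so by the subadditivity and monotonicity of outer content (Lemma~\ref{lem:subadditivity}), $\UpProb\bigl((\tc^{-1}(F))^c\cap\{\omega(0)=c\}\cap\{A_\infty=\infty\}\bigr)\le\UpProb(G^c)$; and Lemma~\ref{lem:super-modulus-P} is precisely the assertion $\LowProb(G)\ge 1-\alpha$, i.e.\ $\UpProb(G^c)\le\alpha$. This gives $\LowQ_c(F)\ge 1-\alpha$, as required. I do not expect a real obstacle here: the only care needed is the bookkeeping around the restriction to $\{\omega(0)=c,A_\infty=\infty\}$ and the $\omega(\infty):=0$ convention in the definition of $\tc$, together with the observation that the implication in~(\ref{eq:super-modulus-P}) already carries the hypothesis $\tau_{s_2}<\infty$, so that the translation between the modulus of continuity of the time-changed path and the increment bound of~(\ref{eq:super-modulus-P}) is immediate.
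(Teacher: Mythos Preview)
Your proof is correct and follows exactly the approach the paper intends: the paper simply states that Lemma~\ref{lem:super-modulus} ``immediately follows from Lemma~\ref{lem:super-modulus-P},'' and your argument is precisely the unpacking of that immediacy via the definition of the pushforward $\LowQ_c$, the inclusion $G\cap\{A_\infty=\infty\}\cap\{A(\omega)\in\Omega\}\subseteq\tc^{-1}(F)$, and the nullity of $\{A(\omega)\notin\Omega\}$. There is no substantive difference in method.
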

\noindent
Lemma~\ref{lem:super-modulus} immediately follows from Lemma~\ref{lem:super-modulus-P},
and Lemma \ref{lem:tight} immediately follows
from Lemma \ref{lem:super-modulus}
and the Arzel\`a--Ascoli theorem
(as stated in \cite{Karatzas/Shreve:1991}, Theorem~2.4.9).

We start the proof of the remaining part of Theorem~\ref{thm:supermain-constructive}
from a series of reductions.
To establish the inequality $\le$ in (\ref{eq:inequality})
we only need to establish
$
  \UpExpect(F\circ\ntt;\omega(0)=c,A_{\infty}=\infty)
  <
  \int F\dd\Wiener_c+\epsilon
$
for each positive constant $\epsilon$.
\begin{enumerate}
\item\label{it:c}
  We can assume that $F$ in (\ref{eq:inequality})
  is lower semicontinuous on $\Omega$.
  Indeed, if it is not,
  by the Vitali--Carath\'eodory theorem
  (see, e.g., \cite{Rudin:1987short}, Theorem 2.25)
  for any compact $\mathfrak{K}\subseteq\Omega$
  (assumed non-empty)
  there exists a lower semicontinuous function $G$ on $\mathfrak{K}$
  such that $G\ge F$ on $\mathfrak{K}$
  and $\int_\mathfrak{K} G \dd\Wiener_c \le \int_\mathfrak{K} F \dd\Wiener_c + \epsilon$.
  Without loss of generality we assume $\sup G\le\sup F$,
  and we extend $G$ to all of $\Omega$
  by setting $G:=\sup F$ outside $\mathfrak{K}$.
  Choosing $\mathfrak{K}$ with large enough $\Wiener_c(\mathfrak{K})$
  (which can be done since the probability measure $\Wiener_c$ is tight:
  see, e.g., \cite{Billingsley:1968}, Theorem 1.4),
  we will have $G\ge F$
  and $\int G \dd\Wiener_c \le \int F \dd\Wiener_c + 2\epsilon$.
  Achieving $\mathfrak{S}_0\le\int G\dd\Wiener_c+\epsilon$
  and $\liminf_{t\to\infty}\mathfrak{S}_t(\omega)\ge(G\circ\ntt)(\omega)$,
  where $\mathfrak{S}$ is a positive capital process,
  will automatically achieve $\mathfrak{S}_0\le\int F\dd\Wiener_c + 3\epsilon$
  and $\liminf_{t\to\infty}\mathfrak{S}_t(\omega)\ge(F\circ\ntt)(\omega)$.
\item\label{it:d}
  We can further assume that $F$ is continuous on $\Omega$.
  Indeed, since each lower semicontinuous function on a metric space
  is the limit of an increasing sequence of continuous functions
  (see, e.g., \cite{Engelking:1989}, Problem 1.7.15(c)),
  given a lower semicontinuous positive function $F$ on $\Omega$
  we can find a series of positive continuous functions $G^n$ on $\Omega$,
  $n=1,2,\ldots$,
  such that $\sum_{n=1}^{\infty}G^n=F$.
  The sum $\mathfrak{S}$ of positive capital processes
  $\mathfrak{S}^1,\mathfrak{S}^2,\ldots$
  achieving $\mathfrak{S}^n_0\le\int G^n\dd\Wiener_c+2^{-n}\epsilon$
  and $\liminf_{t\to\infty}\mathfrak{S}^n_t(\omega)\ge(G^n\circ\ntt)(\omega)$,
  $n=1,2,\ldots$,
  will achieve $\mathfrak{S}_0\le\int F\dd\Wiener_c + \epsilon$
  and $\liminf_{t\to\infty}\mathfrak{S}_t(\omega)\ge(F\circ\ntt)(\omega)$.
\item\label{it:e}
  We can further assume that $F$ depends on $\psi\in\Omega$
  only via $\psi|_{[0,S]}$ for some $S\in(0,\infty)$.
  Indeed, let us fix $\epsilon>0$ and prove
  $\UpExpect(F\circ\ntt;\omega(0)=c,A_{\infty}=\infty)\le\int F\dd\Wiener_c + C\epsilon$
  for some positive constant $C$
  assuming $\UpExpect(G\circ\ntt;\omega(0)=c,A_{\infty}=\infty)\le\int G\dd\Wiener_c$
  for all continuous positive $G$ that depend on $\psi\in\Omega$
  only via $\psi|_{[0,S]}$ for some $S\in(0,\infty)$.
  Choose a compact set $\mathfrak{K}\subseteq\Omega$
  with $\Wiener_c(\mathfrak{K})>1-\epsilon$ and $\LowQ_c(\mathfrak{K})>1-\epsilon$
  (cf.\ Lemma~\ref{lem:tight}).
  Set
  $
    F^S(\psi)
    :=
    F(\psi^S)
  $,
  where $\psi^S$ is defined by $\psi^S(s):=\psi(s\wedge S)$
  and $S$ is sufficiently large in the following sense.
  Since $F$ is uniformly continuous on $\mathfrak{K}$
  and the metric is defined by (\ref{eq:metric}),
  $F$ and $F^S$ can be made arbitrarily close in $C(\mathfrak{K})$;
  in particular, let $\|F-F^S\|_{C(\mathfrak{K})}<\epsilon$.
  Choose positive capital processes $\mathfrak{S}^0$ and $\mathfrak{S}^1$ such that
  \begin{align*}
    \mathfrak{S}^0_0 &\le \int F^S\dd\Wiener_c+\epsilon,&
    \liminf_{t\to\infty}\mathfrak{S}^0_t(\omega) &\ge (F^S\circ\ntt)(\omega),\\
    \mathfrak{S}^1_0 &\le \epsilon,&
    \liminf_{t\to\infty}\mathfrak{S}^1_t(\omega) &\ge (\III_{\mathfrak{K}^c}\circ\ntt)(\omega),
  \end{align*}
  for all $\omega\in\Omega$ satisfying $\omega(0)=c$ and $A_{\infty}(\omega)=\infty$.
  The sum $\mathfrak{S}:=\mathfrak{S}^0+(\sup F)\mathfrak{S}^1+\epsilon$
  will satisfy
  \begin{align*}
    \mathfrak{S}_0
    &\le
    \int F^S\dd\Wiener_c
    +
    (\sup F + 2)\epsilon
    \le
    \int_\mathfrak{K} F^S\dd\Wiener_c
    +
    (2\sup F + 2)\epsilon\\
    &\le
    \int_\mathfrak{K} F\dd\Wiener_c
    +
    (2\sup F + 3)\epsilon
    \le
    \int F\dd\Wiener_c
    +
    (2\sup F + 3)\epsilon
  \end{align*}
  and
  \begin{equation*}
    \liminf_{t\to\infty}\mathfrak{S}_t(\omega)
    \ge
    (F^S\circ\ntt)(\omega)
    +
    (\sup F)
    (\III_{\mathfrak{K}^c}\circ\ntt)(\omega)
    +
    \epsilon
    \ge
    (F\circ\ntt)(\omega),
  \end{equation*}
  provided $\omega(0)=c$ and $A_{\infty}(\omega)=\infty$.
  We assume $S\in\{1,2,4,\ldots\}$,
  without loss of generality.
\item\label{it:f}
  We can further assume that $F(\psi)$ depends on $\psi\in\Omega$
  only via the values $\psi(iS/N)$,
  $i=1,\ldots,N$ (remember that we are interested in the case $\psi(0)=c$),
  for some $N\in\{1,2,\ldots\}$.
  Indeed, let us fix $\epsilon>0$ and prove
  $\UpExpect(F\circ\ntt;\omega(0)=c,A_{\infty}=\infty)\le\int F\dd\Wiener_c + C\epsilon$
  for some positive constant $C$
  assuming $\UpExpect(G\circ\ntt;\omega(0)=c,A_{\infty}=\infty)\le\int G\dd\Wiener_c$
  for all continuous positive $G$ that depend on $\psi\in\Omega$
  only via $\psi(iS/N)$, $i=1,\ldots,N$, for some $N$.
  Let $\mathfrak{K}\subseteq\Omega$ be the compact set in $\Omega$
  defined as
  $
    \mathfrak{K}
    :=
    \left\{
      \psi\in\Omega
      \st
      \psi(0)=c
      \And
      \forall\delta>0:
      m^S_{\delta}(\psi)
      \le
      f(\delta)
    \right\}
  $
  for some $f:(0,\infty)\to(0,\infty)$ satisfying
  $\lim_{\delta\to0}f(\delta)=0$
  (cf.\ the Arzel\`a--Ascoli theorem)
  and chosen in such a way that
  $\Wiener_c(\mathfrak{K})>1-\epsilon$
  and $\LowQ_c(\mathfrak{K})>1-\epsilon$.
  Let $g$ be the modulus of continuity of $F$ on $\mathfrak{K}$,
  $
    g(\delta)
    :=
    \sup_{\psi_1,\psi_2\in\mathfrak{K}:\rho(\psi_1,\psi_2)\le\delta}
    \lvert F(\psi_1)-F(\psi_2)\rvert
  $;
  we know that $\lim_{\delta\to0}g(\delta)=0$.
  Set
  $
    F_N(\psi)
    :=
    F(\psi_N)
  $,
  where $\psi_N$ is the piecewise linear function
  whose graph is obtained
  by joining the points $(iS/N,\psi(iS/N))$, $i=0,1,\ldots,N$,
  and $(\infty,\psi(S))$,
  and $N$ is so large that $g(f(S/N))\le\epsilon$.
  Since
  \begin{equation*}
    \psi\in\mathfrak{K}
    \enspace\Longrightarrow\enspace
    \left\|\psi-\psi_N\right\|_{C[0,S]}
    \le
    f(S/N)
    \enspace\Longrightarrow\enspace
    \rho(\psi,\psi_N)
    \le
    f(S/N)
  \end{equation*}
  (we assume, without loss of generality,
  that the graph of $\psi$ is horizontal over $[S,\infty)$),
  we have $\|F-F_N\|_{C(\mathfrak{K})}\le\epsilon$.
  Choose positive capital processes $\mathfrak{S}^0$ and $\mathfrak{S}^1$ such that
  \begin{align*}
    \mathfrak{S}^0_0 &\le \int F_N\dd\Wiener_c+\epsilon,&
    \liminf_{t\to\infty}\mathfrak{S}^0_t(\omega) &\ge (F_N\circ\ntt)(\omega),\\
    \mathfrak{S}^1_0 &\le \epsilon,&
    \liminf_{t\to\infty}\mathfrak{S}^1_t(\omega) &\ge (\III_{\mathfrak{K}^c}\circ\ntt)(\omega),
  \end{align*}
  provided $\omega(0)=c$ and $A_{\infty}(\omega)=\infty$.
  The sum $\mathfrak{S}:=\mathfrak{S}^0+(\sup F)\mathfrak{S}^1+\epsilon$ will satisfy
  \begin{align*}
    \mathfrak{S}_0
    &\le
    \int F_N\dd\Wiener_c
    +
    (\sup F+2)\epsilon
    \le
    \int_\mathfrak{K} F_N\dd\Wiener_c
    +
    (2\sup F+2)\epsilon\\
    &\le
    \int_\mathfrak{K} F\dd\Wiener_c
    +
    (2\sup F+3)\epsilon
    \le
    \int F\dd\Wiener_c
    +
    (2\sup F+3)\epsilon
  \end{align*}
  and
  \begin{equation*}
    \liminf_{t\to\infty}\mathfrak{S}_t(\omega)
    \ge
    (F_N\circ\ntt)(\omega)
    +
    (\sup F)
    (\III_{\mathfrak{K}^c}\circ\ntt)(\omega)
    +
    \epsilon
    \ge
    (F\circ\ntt)(\omega),
  \end{equation*}
  provided $\omega(0)=c$ and $A_{\infty}(\omega)=\infty$.
\item\label{it:g}
  We can further assume that
  \begin{equation}\label{eq:F}
    F(\psi)
    =
    U
    \left(
      \psi(S/N),
      \psi(2S/N),
      \ldots,
      \psi(S)
    \right)
  \end{equation}
  where the function $U:\bbbr^{N}\to[0,\infty)$
  is not only continuous but also has compact support.
  (We will sometimes say that $U$ is the \emph{generator} of $F$.)
  Indeed, let us fix $\epsilon>0$ and prove
  $\UpExpect(F\circ\ntt;\omega(0)=c,A_{\infty}=\infty)\le\int F\dd\Wiener_c + C\epsilon$
  for some positive constant $C$
  assuming $\UpExpect(G\circ\ntt;\omega(0)=c,A_{\infty}=\infty)\le\int G\dd\Wiener_c$
  for all $G$ whose generator has compact support.
  Let $B_R$ be the open ball of radius $R$ and centred at the origin
  in the space $\bbbr^{N}$ with the $\ell_{\infty}$ norm.
  We can rewrite (\ref{eq:F})
  as $F(\psi)=U(\sigma(\psi))$
  where $\sigma:\Omega\to\bbbr^N$ reduces each $\psi\in\Omega$
  to
  $
    \sigma(\psi)
    :=
    \left(
      \psi(S/N),
      \psi(2S/N),
      \ldots,
      \psi(S)
    \right)
  $.
  Choose $R>0$ so large that
  $\Wiener_c(\sigma^{-1}(B_R))>1-\epsilon$
  and $\LowQ_c(\sigma^{-1}(B_R))>1-\epsilon$
  (the existence of such $R$ follows
  from the Arzel\`a--Ascoli theorem and Lemma~\ref{lem:tight}).
  Alongside $F$, whose generator is denoted $U$,
  we will also consider $F^*$ with generator
  \begin{equation*}
    U^*(z)
    :=
    \begin{cases}
      U(z) & \text{if $z\in\overline{B_R}$}\\
      0 & \text{if $z\in B^c_{2R}$}
    \end{cases}
  \end{equation*}
  (where $\overline{B_R}$ is the closure of $B_R$ in $\bbbr^N$);
  in the remaining region $B_{2R}\setminus\overline{B_R}$,
  $U^*$ is defined arbitrarily
  (but making sure that $U^*$ is continuous
  and takes values in $[0,\sup U]$;
  this can be done by the Tietze--Urysohn theorem,
  \cite{Engelking:1989}, Theorem 2.1.8).
  Choose positive capital processes $\mathfrak{S}^0$ and $\mathfrak{S}^1$ such that
  \begin{align*}
    \mathfrak{S}^0_0 &\le \int F^*\dd\Wiener_c+\epsilon,&
    \liminf_{t\to\infty}\mathfrak{S}^0_t(\omega)
      &\ge (F^*\circ\ntt)(\omega),\\
    \mathfrak{S}^1_0 &\le \epsilon,&
    \liminf_{t\to\infty}\mathfrak{S}^1_t(\omega)
      &\ge (\III_{(\sigma^{-1}(B_R))^c}\circ\ntt)(\omega),
  \end{align*}
  provided $\omega(0)=c$ and $A_{\infty}(\omega)=\infty$.
  The sum $\mathfrak{S}:=\mathfrak{S}^0+(\sup F)\mathfrak{S}^1$ will satisfy
  \begin{align*}
    \mathfrak{S}_0
    &\le
    \int F^*\dd\Wiener_c
    +
    (\sup F+1)\epsilon
    \le
    \int_{\sigma^{-1}(B_R)} F^*\dd\Wiener_c
    +
    (2\sup F+1)\epsilon\\
    &=
    \int_{\sigma^{-1}(B_R)} F\dd\Wiener_c
    +
    (2\sup F+1)\epsilon
    \le
    \int F\dd\Wiener_c
    +
    (2\sup F+1)\epsilon
  \end{align*}
  and
  \begin{multline*}
    \liminf_{t\to\infty}
    \mathfrak{S}_t(\omega)
    \ge
    (F^*\circ\ntt)(\omega)
    +
    (\sup F)
    (\III_{(\sigma^{-1}(B_R))^c}\circ\ntt)(\omega)\\
    \ge
    (F\circ\ntt)(\omega),
  \end{multline*}
  provided $\omega(0)=c$ and $A_{\infty}(\omega)=\infty$.
\item\label{it:h}
  Since every continuous $U:\bbbr^{N}\to[0,\infty)$
  with compact support
  can be arbitrarily well approximated in $C(\bbbr^{N})$
  by an infinitely differentiable (positive) function with compact support
  (see, e.g., \cite{Adams/Fournier:2003short}, Theorem 2.29),
  we can further assume that the generator $U$ of $F$
  is an infinitely differentiable function with compact support.
\item\label{it:i}
  By Lemma~\ref{lem:tight},
  it suffices to prove that,
  given $\epsilon>0$ and a compact set $\mathfrak{K}$ in $\Omega$,
  some positive capital process $\mathfrak{S}$
  with $\mathfrak{S}_0\le\int F \dd\Wiener_c + \epsilon$ achieves
  $\liminf_{t\to\infty}\mathfrak{S}_t(\omega)\ge(F\circ\ntt)(\omega)$
  for all $\omega\in\ntt^{-1}(\mathfrak{K})$
  such that $\omega(0)=c$ and $A_{\infty}(\omega)=\infty$.
  Indeed, we can choose $\mathfrak{K}$
  with $\LowQ_c(\mathfrak{K})$ so close to $1$
  that the sum of $\mathfrak{S}$
  and a positive capital process eventually attaining
  $\sup F$ on $(\ntt^{-1}(\mathfrak{K}))^c$
  will give a positive capital process
  starting from at most $\int F\dd\Wiener_c + 2\epsilon$
  and attaining $(F\circ\ntt)(\omega)$ in the limit,
  provided $\omega(0)=c$ and $A_{\infty}(\omega)=\infty$.
\end{enumerate}
From now on we fix a compact $\mathfrak{K}\subseteq\Omega$,
assuming, without loss of generality,
that the statements inside the outer parentheses
in (\ref{eq:super-modulus-P}) and (\ref{eq:super-2-variation})
are satisfied for some $\alpha>0$ when $\ntt(\omega)\in\mathfrak{K}$.

In the rest of the proof we will be using,
often following \cite{Shafer/Vovk:2001}, Section 6.2,
the standard method going back to Lindeberg \cite{Lindeberg:1922}.
For $i=N-1$,
define a function
$\overline{U}_i:\bbbr\times[0,\infty)\times\bbbr^i\to\bbbr$
by
\begin{equation}\label{eq:overline-U}
  \overline{U}_i(x,D;x_1,\ldots,x_i)
  :=
  \int_{-\infty}^{\infty} U_{i+1}(x_1,\ldots,x_i,x+z) \Normal_{0,D}(dz),
\end{equation}
where $U_N$ stands for $U$
and $\Normal_{0,D}$ is the Gaussian probability measure on $\bbbr$
with mean $0$ and variance $D\ge0$.
Next define, for $i=N-1$,
\begin{equation}\label{eq:U}
  U_i(x_1,\ldots,x_i)
  :=
  \overline{U}_i(x_i,S/N;x_1,\ldots,x_i).
\end{equation}
Finally, we can alternately use (\ref{eq:overline-U}) and (\ref{eq:U})
for $i=N-2,\ldots,1,0$ to define inductively
other $\overline{U}_i$ and $U_i$
(with (\ref{eq:U}) interpreted as
$
  U_0
  :=
  \overline{U}_0(c,S/N)
$
when $i=0$).
Notice that $U_0=\int F\dd\Wiener_c$.

Informally,
the functions (\ref{eq:overline-U}) and (\ref{eq:U})
constitute Sceptic's goal:
assuming $\ntt(\omega)\in\mathfrak{K}$,
$\omega(0)=c$, and $A_{\infty}(\omega)=\infty$,
he will keep his capital at time $\tau_{iS/N}$, $i=0,1,\ldots,N$, close to
$U_i(\omega(\tau_{S/N}),\omega(\tau_{2S/N}),\ldots,\omega(\tau_{iS/N}))$
and his capital at any other time $t\in[0,\tau_S]$
close to
$
  \overline{U}_i(\omega(t),D;
  \omega(\tau_{S/N}),\omega(\tau_{2S/N}),\ldots,\omega(\tau_{iS/N}))
$
where $i:=\lfloor NA_t/S\rfloor$ and $D:=(i+1)S/N-A_t$.
\ifFULL\bluebegin
  (Essentially, Sceptic stops playing soon after
  the inequality in (\ref{eq:super-modulus-P}) becomes violated
  for some $\delta>0$,
  if it ever does.)
\blueend\fi
This will ensure that his capital at time $\tau_S$
is close to or exceeds $(F\circ\ntt)(\omega)$
when his initial capital is $U_0=\int F\dd\Wiener_c$,
$\omega(0)=c$,
and $A_{\infty}(\omega)=\infty$.

The proof is based on the fact that each function
$\overline{U}_i(x,D;x_1,\ldots,x_i)$
satisfies the heat equation in the variables $x$ and $D$:
\begin{equation}\label{eq:heat}
  \frac{\partial\overline{U}_i}{\partial D}
  (x,D;x_1,\ldots,x_i)
  =
  \frac12\frac{\partial^2\overline{U}_i}{\partial x^2}
  (x,D;x_1,\ldots,x_i)
\end{equation}
for all $x\in\bbbr$, all $D>0$,
and all $x_1,\ldots,x_i\in\bbbr$.
This can be checked by direct differentiation.

Sceptic will only bet at the times of the form $\tau_{kS/LN}$,
where $L\in\{1,2,\ldots\}$ is a constant that will later be chosen large
and $k$ is integer.
For $i=0,\ldots,N$ and $j=0,\ldots,L$ let us set
\begin{equation*}
  t_{i,j}
  :=
  \tau_{iS/N+jS/LN},
  \quad
  X_{i,j}
  :=
  \omega(t_{i,j}),
  \quad
  D_{i,j}
  :=
  S/N-jS/LN.
\end{equation*}
For any array $Y_{i,j}$,
we set $dY_{i,j} := Y_{i,j+1} - Y_{i,j}$.

Using Taylor's formula
and omitting the arguments $\omega(\tau_{S/N}),\ldots,\omega(\tau_{iS/N})$,
we obtain, for $i=0,\ldots,N-1$ and $j=0,\ldots,L-1$,
\begin{multline}\label{eq:3}
  d \overline{U}_i(X_{i,j},D_{i,j})
  =
  \frac{\partial\overline{U}_i}{\partial x} (X_{i,j},D_{i,j}) dX_{i,j}
  +
  \frac{\partial\overline{U}_i}{\partial D} (X_{i,j},D_{i,j}) dD_{i,j}
\\
  +
  \frac12 \frac{\partial^2\overline{U}_i}{\partial x^2} (X_{i,j}',D_{i,j}')
  (dX_{i,j})^2
  +
  \frac{\partial^2\overline{U}_i}{\partial x \partial D} (X_{i,j}',D_{i,j}')
  dX_{i,j} dD_{i,j}
\\
  +
  \frac12 \frac{\partial^2\overline{U}_i}{\partial D^2} (X_{i,j}',D_{i,j}')
  (dD_{i,j})^2,
\end{multline}
where $(X_{i,j}',D_{i,j}')$ is a point
strictly between $(X_{i,j},D_{i,j})$ and $(X_{i,j+1},D_{i,j+1})$.
Applying Taylor's formula to
$\partial^2\overline{U}_i/\partial x^2$,
we find
\begin{multline*}
  \frac{\partial^2\overline{U}_i}{\partial x^2} (X_{i,j}',D_{i,j}')
  =
  \frac{\partial^2\overline{U}_i}{\partial x^2} (X_{i,j},D_{i,j})\\
  +
  \frac{\partial^3\overline{U}_i}{\partial x^3}
  (X_{i,j}'',D_{i,j}'') \Delta X_{i,j}
  +
  \frac{\partial^3\overline{U_i}}{\partial D \partial x^2}
  (X_{i,j}'',D_{i,j}'') \Delta D_{i,j},
\end{multline*}
where $(X_{i,j}'',D_{i,j}'')$ is a point strictly between
$(X_{i,j},D_{i,j})$ and $(X_{i,j}',D_{i,j}')$,
and $\Delta X_{i,j}$ and $\Delta D_{i,j}$ satisfy
$\lvert\Delta X_{i,j}\rvert \le \lvert dX_{i,j}\rvert$,
$\lvert\Delta D_{i,j}\rvert \le \lvert dD_{i,j}\rvert$.
Plugging this equation and the heat equation (\ref{eq:heat})
into~(\ref{eq:3}), we obtain
\begin{multline}\label{eq:5}
  d\overline{U}_i(X_{i,j},D_{i,j})
  =
  \frac{\partial\overline{U}_i}{\partial x} (X_{i,j},D_{i,j}) dX_{i,j}
  +
  \frac12
  \frac{\partial^2\overline{U}_i}{\partial x^2} (X_{i,j},D_{i,j})
  \left(
    (dX_{i,j})^2 + dD_{i,j}
  \right)
\\
  +
  \frac12
  \frac{\partial^3\overline{U}_i}{\partial x^3}
  (X_{i,j}'',D_{i,j}'')
  \Delta X_{i,j} (dX_{i,j})^2
  +
  \frac12
  \frac{\partial^3\overline{U}_i}{\partial D \partial x^2}
  (X_{i,j}'',D_{i,j}'')
  \Delta D_{i,j} (dX_{i,j})^2
\\
  +
  \frac
    {\partial^2\overline{U}}
    {\partial x \partial D}
  (X_{i,j}',D_{i,j}')
  dX_{i,j} dD_{i,j}
  +
  \frac12
  \frac{\partial^2\overline{U}}{\partial D^2} (X_{i,j}',D_{i,j}')
  (dD_{i,j})^2.
\end{multline}

To show that Sceptic can achieve his goal,
we will describe a simple trading strategy
that results in increase of his capital of approximately (\ref{eq:5})
during the time interval $[t_{i,j},t_{i,j+1}]$
(we will make sure that the cumulative error of our approximation
is small with high probability,
which will imply the statement of the theorem).
We will see that there is a trading strategy
resulting in the capital increase equal to the first addend
on the right-hand side of (\ref{eq:5}),
that there is another trading strategy
resulting in the capital increase approximately equal to the second addend,
and that the last four addends are negligible.
The sum of the two trading strategies will achieve our goal.

The trading strategy
whose capital increase over $[t_{i,j},t_{i,j+1}]$ is the first addend is obvious:
it bets $\partial\overline{U}_i/\partial x$ at time $t_{i,j}$.
The bet is bounded as average of $\partial U_{i+1}/\partial x_{i+1}$,
the boundedness of which can be seen from the recursive formula
\begin{multline*}
  U_k(x_1,\ldots,x_k)
  =
  \int_{-\infty}^{\infty}
  U_{k+1}(x_1,\ldots,x_k,x_k+z)
  \Normal_{0,S/N}(dz),\\
  k=i+1,\ldots,N-1,
\end{multline*}
and $U_N=U$ being an infinitely differentiable function with compact support.

The second addend involves the expression
$(dX_{i,j})^2 + dD_{i,j} = (\omega_{i,j+1}-\omega_{i,j})^2-S/LN$.
To analyze it, we will need the following lemma.
\begin{lemma}\label{lem:bound}
  For all $\delta>0$ and $\beta>0$,
  there exists a positive integer $l$ such that
  \begin{equation*}
    t_{i,j+1}<\infty
    \Longrightarrow
    \left|
      \frac{A^{l,t_{i,j}}_{t_{i,j+1}}}{S/LN}
      -
      1
    \right|
    <
    \delta
    \quad
  \end{equation*}
  holds for all $i=0,\ldots,N-1$ and $j=0,\ldots,L-1$
  except for a set of $\omega$ of upper price at most $\beta$.
\end{lemma}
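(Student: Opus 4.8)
The plan is to mimic, with a fresh starting time $t_{i,j}$ in place of $0$, the analysis of the proofs of Lemmas~\ref{lem:quadratic-variation} and~\ref{lem:modulus-P}, but to keep \emph{both} a lower and an upper bound on $A^{l,t_{i,j}}$ in terms of $A^{\infty,t_{i,j}}$ (the proof of Lemma~\ref{lem:modulus-P} needed only the upper one). Fix $\delta,\beta>0$ and put $\gamma:=\beta/(NL)$. For each pair $(i,j)$ with $i\in\{0,\ldots,N-1\}$ and $j\in\{0,\ldots,L-1\}$, let $\mathfrak{E}_{i,j}$ be the event that $t_{i,j}<\infty$ implies that the shifted analogue
\begin{equation*}
  \bigl|A^{n,t_{i,j}}_t - A^{n-1,t_{i,j}}_t\bigr|
  \le
  2^{-n}A^{n,t_{i,j}}_t + e_n,
  \qquad
  e_n:=2^{-2n+2}+\frac{n+\ln(2/\gamma)}{2^n},
\end{equation*}
of (\ref{eq:decreasing})--(\ref{eq:increasing}) holds for all $n=1,2,\ldots$ and all $t\ge t_{i,j}$. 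Exactly as in the proof of Lemma~\ref{lem:modulus-P}, applying at time $t_{i,j}$ the trading strategy built in the proof of Lemma~\ref{lem:quadratic-variation} (from the game-theoretic Hoeffding inequality, Theorem~\ref{thm:super}) shows that the inner content of $\mathfrak{E}_{i,j}$ is at least $1-\gamma$, i.e.\ $\UpProb(\mathfrak{E}_{i,j}^c)\le\gamma$.

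On $\{t_{i,j}<\infty\}\cap\mathfrak{E}_{i,j}$, rewrite the two halves of the displayed inequality as $A^{n-1,t_{i,j}}_t\ge(1-2^{-n})A^{n,t_{i,j}}_t-e_n$ and $A^{n-1,t_{i,j}}_t\le(1+2^{-n})A^{n,t_{i,j}}_t+e_n$, and iterate them over $n=l+1,l+2,\ldots$ (as in the derivation of (\ref{eq:from-below})). This yields, off a null set,
\begin{equation*}
  \Bigl(\prod_{n>l}(1-2^{-n})\Bigr)A^{\infty,t_{i,j}}_{t_{i,j+1}}-\sum_{n>l}e_n
  \le
  A^{l,t_{i,j}}_{t_{i,j+1}}
  \le
  \Bigl(\prod_{n>l}(1+2^{-n})\Bigr)\Bigl(A^{\infty,t_{i,j}}_{t_{i,j+1}}+\sum_{n>l}e_n\Bigr).
\end{equation*}
As in Lemma~\ref{lem:modulus-P}, the exceptional null set lets me also assume that $A$ exists (Lemma~\ref{lem:convergence}, Theorem~\ref{thm:main-constructive}(a)), whence $A^{\infty,t_{i,j}}_{t_{i,j+1}}=A_{t_{i,j+1}}-A_{t_{i,j}}$ and, by continuity of $A$ and the definition (\ref{eq:stopping}) of $\tau_s$, this difference equals $S/LN$ whenever $t_{i,j+1}<\infty$. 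Since $\prod_{n>l}(1\pm2^{-n})\to1$ and $\sum_{n>l}e_n\to0$ as $l\to\infty$, I can choose $l=l(\delta,\beta)$ (recall that $N,L,S$ are fixed in this context) so large that the displayed bounds force $A^{l,t_{i,j}}_{t_{i,j+1}}\in\bigl((1-\delta)S/LN,(1+\delta)S/LN\bigr)$; the same $l$ serves every pair $(i,j)$.

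Finally, by the finite subadditivity of outer content (Lemma~\ref{lem:subadditivity}), the union of the $NL$ sets $\mathfrak{E}_{i,j}^c$ together with the exceptional null sets has outer content at most $NL\gamma=\beta$; off this set the asserted implication holds simultaneously for all $i$ and $j$. The only work beyond quoting Lemma~\ref{lem:modulus-P} is the bookkeeping for the lower bound on $A^{l,t_{i,j}}_{t_{i,j+1}}$ — running the backward iteration with the shifted form of (\ref{eq:decreasing}), not just of (\ref{eq:increasing}), and checking that the accumulated multiplicative and additive errors still tend to $1$ and $0$ — which is routine. I therefore expect no genuine obstacle: the lemma is essentially a two-sided, localized restatement of the convergence $A^n\to A$ already established.
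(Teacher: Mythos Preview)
Your proposal is correct and follows exactly the route the paper indicates: the paper's own proof is a two-line remark that Lemma~\ref{lem:bound} is proved ``similarly to (\ref{eq:bound})'' but using both (\ref{eq:decreasing}) and (\ref{eq:increasing}) rather than only (\ref{eq:increasing}), and you have carried out precisely that two-sided iteration with the shifted starting times~$t_{i,j}$. The only cosmetic difference is that in deriving (\ref{eq:bound}) the paper splits the error budget into thirds ($\gamma=\tfrac13\,2^{-d-m}\beta$), whereas you set $\gamma=\beta/(NL)$; both choices work here.
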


Lemma \ref{lem:bound} can be proved similarly to (\ref{eq:bound}).
(The inequality in (\ref{eq:bound}) is one-sided,
so it was sufficient to use only (\ref{eq:increasing});
for Lemma \ref{lem:bound} both (\ref{eq:increasing}) and (\ref{eq:decreasing})
should be used.)

We know that $(\omega(t)-\omega(t_{i,j}))^2-A^{l,t_{i,j}}_t$
is a simple capital process
(see the proof of Lemma \ref{lem:modulus-P}).
Therefore,
there is indeed a simple trading strategy
resulting in capital increase approximately equal
to the second addend on the right-hand side of (\ref{eq:5}),
with the cumulative approximation error
that can be made arbitrarily small
on a set of $\omega$ of lower price arbitrarily close to $1$.
(Analogously to the analysis of the first addend,
$\partial^2\overline{U}_i/\partial x^2$
is bounded as average of $\partial^2 U_{i+1}/\partial x_{i+1}^2$.)

Let us show that the last four terms
on the right-hand side of (\ref{eq:5})
are negligible when $L$ is sufficiently large
(assuming $S$, $N$, and $U$ fixed).
All the partial derivatives involved in those terms are bounded:
the heat equation implies
\begin{align*}
  \frac{\partial^3\overline{U}_i}{\partial D \partial x^2}
  &=
  \frac{\partial^3\overline{U}_i}{\partial x^2 \partial D}
  =
  \frac12
  \frac{\partial^4\overline{U}_i}{\partial x^4},
\\
  \frac{\partial^2\overline{U}_i}{\partial x \partial D}
  &=
  \frac12
  \frac{\partial^3\overline{U}_i}{\partial x^3},
\\
  \frac{\partial^2\overline{U}_i}{\partial D^2}
  &=
  \frac12
  \frac{\partial^3\overline{U}_i}{\partial D \partial x^2}
  =
  \frac14
  \frac{\partial^4\overline{U}_i}{\partial x^4},
\end{align*}
and $\partial^3\overline{U}_i/\partial x^3$
and $\partial^4\overline{U}_i/\partial x^4$,
being averages of $\partial^3 U_{i+1}/\partial x_{i+1}^3$
and $\partial^4 U_{i+1}/\partial x_{i+1}^4$,
respectively,
are bounded.
We can assume that
\begin{equation*}
  \lvert dX_{i,j}\rvert
  \le
  C_1 L^{-1/8},
  \quad
  \sum_{i=0}^{N-1}
  \sum_{j=0}^{L-1}
  (dX_{i,j})^2
  \le
  C_2 L^{1/16}
\end{equation*}
(cf.\ (\ref{eq:super-modulus-P}) and (\ref{eq:super-2-variation}), respectively)
for $\ntt(\omega)\in\mathfrak{K}$
and some constants $C_1$ and $C_2$
(remember that $S$, $N$, $U$, and, of course, $\alpha$ are fixed;
without loss of generality we can assume that $N$ and $L$ are powers of $2$).
This makes the cumulative contribution of the four terms
have at most the order of magnitude $O(L^{-1/16})$;
therefore,
Sceptic can achieve his goal for $\ntt(\omega)\in\mathfrak{K}$
by making $L$ sufficiently large.

To ensure that his capital is always positive,
Sceptic stops playing as soon as his capital hits $0$.
Increasing his initial capital by a small amount
we can make sure that this will never happen when $\ntt(\omega)\in\mathfrak{K}$
(for $L$ sufficiently large).

\ifFULL\bluebegin
  \section{Proof of Theorem \ref{thm:main-constructive}(c)}
  \label{sec:proof-constructive-c}

  It is easy to adapt the proof of Theorem~\ref{thm:main-constructive}(b)
  to prove Theorem~\ref{thm:main-constructive}(c).
  All the reductions in the previous section
  will go through with obvious changes
  (in particular, $\UpQ_c$ should be changed to $\UpQ_{c,S}$,
  $\LowQ_c$ to $\LowQ_{c,S}$,
  $A_{\infty}=\infty$ to $\tau_S<\infty$,
  and $\ntt$ to $\ntt_S$).
  The equality in line (\ref{eq:chain-2})
  now uses $\LowProb(\ntt_S^{-1}(C[0,S]);\omega(0)=c,\tau_S<\infty)=1$,
  which also follows from Lemma~\ref{lem:super-modulus-P}.
  The $F$ in (\ref{eq:inequality}) is now
  a bounded positive $\FFF_S$-measurable functional on $C[0,S]$;
  correspondingly,
  most of the entries of $\Omega$ in the proof
  should be replaced by $C[0,S]$.
  Reduction \ref{it:e} is now superfluous.
  The $\psi_n$ in reduction \ref{it:f} is only defined on $[0,S]$
  and $\rho(\psi,\psi_N)$ stands for the $C[0,S]$ distance.
  The main part of the proof (following the reductions)
  goes through without further changes.
\blueend\fi

\section{Proof of the inequality $\le$ in Theorem~\ref{thm:supermain}}
\label{sec:proof-main-le}

Fix a bounded positive $\KKK$-measurable functional $F$.
Let $a:=\int F \dd\Wiener_c$;
our goal is to show that $\UpExpect(F;\omega(0)=c)\le a$.
Define $\Omega'$ to be the set of all $\omega\in\Omega$
such that $\omega(0)=c$ and $\forall t\in[0,\infty):\UpA_t(\omega)=\LowA_t(\omega)=t$.
We know (Lemma~\ref{lem:A-BM}) that $\Wiener_c(\Omega')=1$.
It is clear that $\tau_s(\omega)=s$ for all $\omega\in\Omega'$,
and so $\ntt(\omega)=\omega$ for all $\omega\in\Omega'$.
By Theorem~\ref{thm:supermain-constructive},
\begin{equation*}
  \UpExpect(F\III_{\Omega'})
  =
  \UpExpect(F;\Omega')
  =
  \UpExpect(F\circ\ntt;\Omega')
  \le
  \UpExpect(F\circ\ntt;\omega(0)=c,A_{\infty}=\infty)
  =
  a
\end{equation*}
(we will not need the opposite inequality in that theorem).
Therefore, for any $\epsilon>0$
there exists a positive capital process $\mathfrak{S}$
such that $\mathfrak{S}_0\le a+\epsilon$
and $\liminf_{t\to\infty}\mathfrak{S}_t\ge F\III_{\Omega'}$.
We assume, without loss of generality, that $\mathfrak{S}$ is bounded.
Moreover, the proof of Theorem~\ref{thm:supermain-constructive}
shows that $\mathfrak{S}$ can be chosen \emph{time-invariant},
in the sense that
$\mathfrak{S}_{f(t)}(\omega)=\mathfrak{S}_{t}(\omega\circ f)$
for all time transformations $f$ and all $t\in[0,\infty)$.
This property will also be assumed to be satisfied
until the end of this section.
In conjunction with the time-superinvariance of $F$
(which is equivalent to (\ref{eq:invariant-function}))
and the last statement of Theorem~\ref{thm:main-constructive}(a),
it implies,
for typical $\omega\in\Omega$ satisfying $\omega(0)=c$ and $A_\infty(\omega)=\infty$,
\begin{multline}\label{eq:infinite-A}
  \liminf_{t\to\infty}
  \mathfrak{S}_t(\omega)
  =
  \liminf_{t\to\infty}
  \mathfrak{S}_t(\psi^f)
  =
  \liminf_{t\to\infty}
  \mathfrak{S}_{f(t)}(\psi)\\
  \ge
  (F\III_{\Omega'})(\psi)
  =
  F(\psi)
  \ge
  F(\omega),
\end{multline}
where $\psi$ is any element of $\Omega'$ that satisfies $\psi^f=\omega$
for some time transformation $f$,
necessarily satisfying $\lim_{t\to\infty}f(t)=\infty$
(we can always take $\psi:=\ntt(\omega)$ and $f:=A(\omega)$;
$\omega=\ntt(\omega)\circ A(\omega)$ follows from $\omega(t)=\omega(\tau_{A_t(\omega)})$).
It is easy to modify $\mathfrak{S}$
so that $\mathfrak{S}_0$ is increased by at most $\epsilon$
and the inequality between the two extreme terms in (\ref{eq:infinite-A})
becomes true for all, rather than for typical,
$\omega\in\Omega$ satisfying $\omega(0)=c$ and $A_\infty(\omega)=\infty$.
\ifFULL\bluebegin
  We need the qualification ``typical'' because $f$ exists
  only for typical $\omega$
  (namely, satisfying the last statement of Theorem~\ref{thm:main-constructive}(a)).
\blueend\fi

Let us now consider $\omega\in\Omega$
such that $\omega(0)=c$ but $A_\infty(\omega)=\infty$ is not satisfied.
Without loss of generality we assume
that $A(\omega)$ exists and is an element of $\Omega$
with the same intervals of constancy as $\omega$
and that the statement in the outermost parentheses in (\ref{eq:super-modulus-P})
holds for some $\alpha>0$.
Set $b:=A_\infty(\omega)<\infty$.
Suppose $\liminf_{t\to\infty}\mathfrak{S}_t(\omega)\le F(\omega)-\delta$
for some $\delta>0$;
to complete the proof, it suffices to arrive at a contradiction.
By the statement in the outermost parentheses in (\ref{eq:super-modulus-P}),
the function $\ntt(\omega)|_{[0,b)}$
can be continued to the closed interval $[0,b]$
so that it becomes an element $g$ of $C[0,b]$.
Let $\Gamma(g)$ be the set of all extensions of $g$
that are elements of $\Omega$.
By the time-superinvariance of $F$,
all $\psi\in\Gamma(g)$ satisfy $F(\psi)\ge F(\omega)$.
Since $\liminf_{t\to b-}\mathfrak{S}_t(\psi)\le F(\omega)-\delta$
(remember that $\mathfrak{S}$ is time-invariant)
and the function $t\mapsto\mathfrak{S}_t$ is lower semicontinuous
(see (\ref{eq:positive-capital})),
$\mathfrak{S}_b(\psi)\le F(\omega)-\delta\le F(\psi)-\delta$,
for each $\psi\in\Gamma(g)$.
Continue $g$, which is now fixed, by measure-theoretic Brownian motion
starting from $g(b)$,
so that the extension is an element of $\Omega'$ with probability one.
Let us represent $\mathfrak{S}$ in the form (\ref{eq:positive-capital})
and use the argument in the proof of Lemma~\ref{lem:main-ge}.
We can see that $\mathfrak{S}_t(\xi)$, $t\ge b$,
where $\xi$ is $g$ extended by the trajectory of Brownian motion
starting from $g(b)$,
is a positive measure-theoretic supermartingale
with the time interval $[b,\infty)$.
Now we have the following analogue of (\ref{eq:maximal}):
\begin{equation*}
  \int_{\Gamma(g)}
  \liminf_{t\to\infty}
  \mathfrak{S}_t
  \dd P
  \le
  \liminf_{t\to\infty}
  \int_{\Gamma(g)}
  \mathfrak{S}_t
  \dd P
  \le
  \int_{\Gamma(g)}
  \mathfrak{S}_b
  \dd P
  \le
  \int_{\Gamma(g)}
  F
  \dd P
  -
  \delta,
\end{equation*}
$P$ referring to the underlying probability measure of the Brownian motion
(concentrated on $\Gamma(g)$).
However,
$\int_{\Gamma(g)}\liminf_{t\to\infty}\mathfrak{S}_t\dd P<\int_{\Gamma(g)}F\dd P$
contradicts the choice of $\mathfrak{S}$:
cf.\ (\ref{eq:infinite-A}) and Lemma~\ref{lem:A-BM}.

\ifFULL\bluebegin
\section{Applications of the constructive version of the main result}
\label{sec:applications-constructive}

This section is an analogue of Section \ref{sec:applications}
for Theorem~\ref{thm:main-constructive}.

\subsection{Points of strict increase}

Let us say that $t\in(0,\infty)$ is a \emph{point of strict increase} for $\omega$
if there exists $\delta>0$
such that $\omega(t_1)<\omega(t)<\omega(t_2)$
for all $t_1\in((t-\delta)^+,t)$ and $t_2\in(t,t+\delta)$.
Points of strict decrease are defined in the same way
except that $\omega(t_1)<\omega(t)<\omega(t_2)$
is replaced by $\omega(t_1)>\omega(t)>\omega(t_2)$.
In \cite{\CTI},
slightly more general notions of ``semi-strict'' increase and decrease were considered
(however, the statement proved in \cite{\CTI}
is still not as strong as Corollary~\ref{cor:increase-constructive} above).
\begin{corollary}\label{cor:increase-constructive}
  Typical $\omega$ have no points of strict increase or decrease.
\end{corollary}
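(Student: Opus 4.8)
The plan is to mirror the proof of Corollary~\ref{cor:increase}, now invoking the constructive main result Theorem~\ref{thm:main-constructive} in place of Theorem~\ref{thm:main}. As in Corollary~\ref{cor:increase}, applying the constructed positive capital process to $\omega-\omega(0)$ (a translation preserves points of strict increase and of strict decrease) reduces everything to the $\omega$ with $\omega(0)=c$ for a fixed $c$; take $c:=0$. The measure-theoretic input is the Dvoretzky--Erd\H{o}s--Kakutani theorem: Brownian motion started at $0$ has, almost surely, no point of increase or decrease, hence \emph{a fortiori} no point of strict increase or decrease. Letting $B\subseteq\Omega$ be the (null, for $\Wiener_0$) event that $\omega$ has a point of strict increase or of strict decrease, Theorem~\ref{thm:main-constructive}(b) then gives that $\tc(\omega)$ has no point of strict increase or decrease for typical $\omega$ with $\omega(0)=0$ and $A_\infty(\omega)=\infty$.

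The main step is to transfer this back from $\tc(\omega)$ to $\omega$ itself, and here I would use Theorem~\ref{thm:main-constructive}(a): for typical $\omega$, $A(\omega)\in\Omega$ is increasing and has the same intervals of constancy as $\omega$. If such an $\omega$ had a point $t$ of strict increase, then $\omega$ is not locally constant to the left or to the right of $t$, and hence neither is $A(\omega)$; together with the continuity of $A(\omega)$ this forces $\tau_{s'}\uparrow t$ as $s'\uparrow s:=A_t(\omega)$ and $\tau_{s''}\downarrow t$ as $s''\downarrow s$, with $\tau_s=t$. Since $\omega$ is $<\omega(t)$ on a left-neighbourhood of $t$ and $>\omega(t)$ on a right-neighbourhood, it follows that $s$ is a point of strict increase of $s'\mapsto\omega(\tau_{s'})=\tc(\omega)(s')$, contradicting the previous paragraph; the case of strict decrease is identical.

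This disposes of the $\omega$ with $A_\infty(\omega)=\infty$. For the remaining $\omega$, with $b:=A_\infty(\omega)<\infty$, I would argue exactly as in Section~\ref{sec:proof-main-le}: work with a time-invariant positive capital process, use that $\tc(\omega)$ restricted to $[0,b)$ extends to an element of $C[0,b]$ all of whose $\Omega$-extensions lie in $B^c$ only if they have no point of strict increase or decrease, and apply the supermartingale--Fatou computation of Lemma~\ref{lem:main-ge} to a Brownian continuation from $b$. I expect this last case to be the main nuisance, since Theorem~\ref{thm:main-constructive}(b) only controls the regime $A_\infty=\infty$. (One can in fact sidestep the whole issue: a point of strict increase of $\omega$ is in particular a point of increase at which $\omega$ is not locally constant to the right, so the statement is already contained in Corollary~\ref{cor:increase}; but this route does not exercise Theorem~\ref{thm:main-constructive}, which is the point of this section.)
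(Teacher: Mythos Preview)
Your argument is correct, but the paper takes a shorter route by invoking Theorem~\ref{thm:main-constructive}(c) (the finite-horizon version with $\tau_S<\infty$) rather than part~(b). The paper's observation is that if $t$ is a point of strict increase or decrease of $\omega$, then $\omega$ is not constant on any right-neighbourhood of $t$, so by part~(a) neither is $A(\omega)$; hence $A_t(\omega)<A_\infty(\omega)$ and there is a rational $S$ with $A_t<S<A_\infty$, i.e., $\tau_S<\infty$. Part~(c) combined with Dvoretzky--Erd\H{o}s--Kakutani then gives, for each fixed rational $S$, that the event ``there is a point $t$ of strict increase or decrease with $A_t<S$ and $\tau_S<\infty$'' is null; the countable union over rational $S$ finishes the proof in two sentences.

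What this buys is precisely the elimination of your case split: the finite-horizon statement~(c) is agnostic to whether $A_\infty$ is finite or infinite, so the Brownian-continuation/supermartingale argument from Section~\ref{sec:proof-main-le} is never needed. Your route via part~(b) is longer but self-contained in the sense that it uses only the parts of Theorem~\ref{thm:main-constructive} stated in the main body of the paper. Your handling of $A_\infty<\infty$ is essentially right --- the key point you should make explicit is that the strict increase at $t$ forces $A_t<b$, so $\tc(\omega)$ has a point of strict increase at $s=A_t<b$ which survives in every $\Omega$-extension of $g$, and that the capital process coming out of the proof of part~(b) can be taken time-invariant (as noted in Section~\ref{sec:proof-main-le}).
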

\begin{proof}
  By Theorem \ref{thm:main-constructive}(c) and the Dvoretzky--Erd\H{o}s--Kakutani result,
  the upper price of the event
  that there is a point $t$ of strict increase or decrease
  such that $A_t<S<A_{\infty}$ for some rational number $S$
  is zero.
  The inequality $A_t(\omega)<A_{\infty}(\omega)$
  (except for $\omega$ in a null set)
  for any point $t$ of strict increase or decrease of $\omega$,
  implying the existence of such $S$,
  follows from Theorem \ref{thm:main-constructive}(a).
\end{proof}

\subsection{Variation index}

\begin{corollary}
  For typical $\omega\in\Omega$,
  the following is true.
  For any interval $[u,v]\subseteq[0,\infty)$,
  either $\vi^{[u,v]}(\omega)=2$ or $\omega$ is constant over $[u,v]$.
\end{corollary}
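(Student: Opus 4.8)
This restates Corollary~\ref{cor:vi}, which has already been deduced from Theorem~\ref{thm:main}; the point here is to re-derive it from the constructive Theorem~\ref{thm:main-constructive}, following the pattern of Corollary~\ref{cor:increase-constructive} (with L\'evy's theorem on the $p$-variation of Brownian motion, \cite{levy:1940}, or Lepingle \cite{lepingle:1976} for semimartingales, in place of the Dvoretzky--Erd\H{o}s--Kakutani theorem). As in the proof of Corollary~\ref{cor:increase}, it is enough to show that the set of $\omega$ with $\omega(0)=0$ failing the property is null; applying a witnessing positive capital process to $\omega-\omega(0)$ then removes the restriction, the failure of the property being invariant under adding a constant. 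We work throughout on the full set of $\omega$ for which $A(\omega)\in\Omega$ exists, is increasing, and has the same intervals of constancy as $\omega$ (Theorem~\ref{thm:main-constructive}(a)); on this set $\omega=\tc(\omega)\circ A(\omega)$, since $\tau_{A_t(\omega)}$ is the first time $A(\omega)$ attains the level $A_t(\omega)$ and $\omega$ is constant between that time and $t$.

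The first ingredient is elementary and parallels Lemma~\ref{lem:inv}: if $g:[a,b]\to[c,d]$ is continuous and increasing with $g(a)=c$ and $g(b)=d$, and $\phi\in C[c,d]$, then $\var_p^{[a,b]}(\phi\circ g)=\var_p^{[c,d]}(\phi)$ for every $p>0$ (because $g$ maps $[a,b]$ onto $[c,d]$), and hence $\vi^{[a,b]}(\phi\circ g)=\vi^{[c,d]}(\phi)$. The second ingredient transfers L\'evy's theorem through Theorem~\ref{thm:main-constructive}(c): for each rational $S>0$ let $E_S\subseteq C[0,S]$ be the event that $\vi^{[s_1,s_2]}(\psi)\ne2$ for some rational $s_1<s_2$ in $[0,S]$; this set is Borel, and since Brownian motion is almost surely nowhere constant with variation index $2$ on every non-degenerate subinterval, $\Wiener_{0,S}(E_S)=0$, so by Theorem~\ref{thm:main-constructive}(c) the set $\tc_S^{-1}(E_S)\cap\{\omega(0)=0,\tau_S<\infty\}$ is null. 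Taking the union over rational $S$, for typical $\omega$ with $\omega(0)=0$ we get $\vi^{[s_1,s_2]}(\tc_S(\omega))=2$ whenever $S$ is rational with $\tau_S(\omega)<\infty$ and $[s_1,s_2]$ is a non-degenerate subinterval of $[0,S]$.

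Now fix such a typical $\omega$ and an interval $[u,v]$ on which $\omega$ is not constant. By Theorem~\ref{thm:main-constructive}(a) this forces $A_u(\omega)<A_v(\omega)\le A_\infty(\omega)$, so we may pick a rational $S$ with $A_u(\omega)<S<A_v(\omega)$; then $S<A_\infty(\omega)$, so $\tau_S(\omega)<\infty$. Put $u_S:=\inf\{t:A_t(\omega)\ge S\}$; then $u<u_S<v$, $A_{u_S}(\omega)=S$, and $A_t(\omega)<A_\infty(\omega)$ for $t\in[u,u_S]$, so on this range $\omega=\tc(\omega)\circ A(\omega)$ without ambiguity at infinity and $\omega|_{[u,u_S]}=\tc_S(\omega)|_{[A_u(\omega),S]}\circ(A(\omega)|_{[u,u_S]})$. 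The first ingredient gives $\vi^{[u,u_S]}(\omega)=\vi^{[A_u(\omega),S]}(\tc_S(\omega))=2$, whence $\var_p^{[u,v]}(\omega)\ge\var_p^{[u,u_S]}(\omega)=\infty$ for $p<2$, i.e.\ $\vi^{[u,v]}(\omega)\ge2$. For the opposite inequality $\vi^{[u,v]}(\omega)\le2$ (that is, $\var_p^{[u,v]}(\omega)<\infty$ for $p>2$) it is simplest to quote Corollary~\ref{cor:vi}, the first assertion of Corollary~\ref{cor:Taylor-all}, or the direct argument of \cite{\CTII}; together with the lower bound this gives $\vi^{[u,v]}(\omega)=2$.

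I expect the main obstacle to be the right-hand end of the quadratic-variation clock. Theorem~\ref{thm:main-constructive}(c) controls $\tc_S$ only for $S<A_\infty$, so when $\omega$ is constant to the right of $v$ (the case $A_v=A_\infty<\infty$) the interval $[A_u,A_v]$ cannot be placed inside any $[0,S]$ with $\tau_S<\infty$; the approximation of $[u,v]$ from the inside by $[u,u_S]$ recovers the lower bound $\vi^{[u,v]}\ge2$ but not the upper bound, which is why the latter is imported rather than reproved here. A secondary, purely technical, point is the careful verification of the identity $\omega=\tc(\omega)\circ A(\omega)$ and of the measurability of the events $E_S$, the latter reducing, by continuity, to suprema over rational partitions.
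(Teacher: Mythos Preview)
Your lower-bound argument ($\vi^{[u,v]}(\omega)\ge2$) is correct and matches the paper's: both exploit that a non-degenerate subinterval of $[u,v]$ in $A$-time sits inside some $[0,S]$ with rational $S<A_\infty$, so Theorem~\ref{thm:main-constructive}(c) together with L\'evy's result on Brownian $p$-variation forces infinite $p$-variation there for $p<2$. Your time-change identity $\omega=\tc(\omega)\circ A(\omega)$ and the invariance of $\var_p$ under monotone onto reparametrisation are exactly what the paper uses (implicitly) for this half.

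Where you and the paper part ways is the upper bound $\vi^{[u,v]}(\omega)\le2$. You correctly diagnose the obstacle---when $A_v=A_\infty<\infty$ the interval cannot be captured inside $[0,S]$ with $\tau_S<\infty$---and then import the bound from \cite{\CTII} or from Corollary~\ref{cor:vi}. The paper instead resolves this case \emph{within} the constructive framework by a Brownian-extension trick: given $\omega$ with $\vi^{[u,v]}(\omega)>2$, pick rationals $T>v$ and $S>A_T$; Theorem~\ref{thm:main-constructive}(c) provides a positive capital process that reaches $\infty$ at $\tau_S$ on the bad set provided $A_\infty>S$. Now extend $\omega|_{[0,T]}$ by a measure-theoretic Brownian motion started at $\omega(T)$. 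On the extension the capital process is a positive supermartingale over $[T,T+S]$ that is $\infty$ at time $T+S$ almost surely, hence already $\infty$ at time $T$---but the value at $T$ depends only on $\omega|_{[0,T]}$, so the original capital process is $\infty$ at $T$ on $\omega$ itself, and the bad set is null. This device (append Brownian motion to force $A_\infty>S$, then use the supermartingale property to pull the conclusion back to finite time) is precisely the missing idea you anticipated; it avoids any appeal to Theorem~\ref{thm:main} or to the direct arguments of \cite{\CTII}.
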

\begin{proof}
  Let $\omega\in\Omega$ be such that,
  for some interval $[u,v]$,
  $\vi^{[u,v]}(\omega)<2$ and $\omega$ is not constant over $[u,v]$.
  There exist (unless $\omega$ lies in a fixed null set)
  rational numbers $s_1<s_2$ such that
  $[\tau_{s_1}(\omega),\tau_{s_2}(\omega)]\subseteq[u,v]$,
  $s_2<A_{\infty}(\omega)$,
  $\vi^{[\tau_{s_1}(\omega),\tau_{s_2}(\omega)]}(\omega)<2$,
  and $\omega$ is not constant over $[\tau_{s_1}(\omega),\tau_{s_2}(\omega)]$.
  Since there exists a rational number $S>0$
  such that $s_2<S<A_{\infty}(\omega)$,
  Theorem \ref{thm:main-constructive}(c) and the measure-theoretic result
  imply that the set of such $\omega$ is null.

  The proof for the case $\vi^{[u,v]}(\omega)>2$
  will be somewhat more awkward
  (which is why we considered the case 
  $\vi^{[u,v]}(\omega)<2$ separately).
  We cannot apply the idea of the previous paragraph
  since we cannot claim that $v<\tau_S<\infty$ for a rational $S$
  ($\omega$ can ``freeze'' at time $v$).
  We will have to add a bit of measure-theoretic Brownian motion to ensure
  growth of $A$ after time $v$.

  First we observe that in the case where $\omega$ is generated
  as a sample path of a continuous martingale
  each positive capital process,
  as defined in (\ref{eq:positive-capital}),
  is a positive supermartingale.
  Indeed, we already used in the proof of Lemma~\ref{lem:main-ge}
  the fact that each partial sum in (\ref{eq:simple-capital})
  is a positive supermartingale;
  by the monotone convergence theorem,
  (\ref{eq:positive-capital}) is also a positive supermartingale.

  \ifFULL\bluebegin
    We do not discuss continuity in the previous paragraph.
    Suppose $\omega$ is the sample path of a continuous martingale.
    Then each (\ref{eq:simple-capital}) is continuous,
    although positive capital processes (\ref{eq:positive-capital})
    are not necessarily continuous
    (the definition only implies that they are lower semicontinuous).
    It appears, however, that each positive capital process (\ref{eq:positive-capital})
    is, almost surely, a right-continuous positive supermartingale:
    see Kallenberg \cite{Kallenberg:2002}, Theorem 7.32,
    although Kallenberg uses a different definition of supermartingale
    (requires its integrability).
    Rogers and Williams \cite{Rogers/Williams:2000}, Theorem II.78.2,
    require not only integrability of supermartingales
    but also the usual conditions.
    They refer to proofs by Meyer and Getoor
    (Kallenberg credits his proof to Doob, without precise references).
  \blueend\fi

  Let $\omega\in\Omega$ be such that,
  for some interval $[u,v]$,
  $\vi^{[u,v]}(\omega)>2$
  (which implies that $\omega$ is not constant over $[u,v]$).
  There exist rational numbers $T>v$ and $S>A_T$.
  By Theorem \ref{thm:main-constructive}(c)
  and the measure-theoretic result
  there exists a positive capital process $\mathfrak{S}_t$
  that takes value $\infty$ at time $\tau_S$ on such $\omega$
  provided $A_{\infty}(\omega)>S$.
  (The existence of such a capital process is shown in \cite{\CTI} and \cite{\CTII}.)
  Consider the behaviour of $\mathfrak{S}_t$ on $\omega|_{[0,T]}$
  extended by attaching to it a sample path of Brownian motion started from $\omega(T)$.
  The process $\mathfrak{S}_t$, $T\le t\le T+S$,
  is a positive supermartingale
  such that $\mathfrak{S}_{T+S}=\infty$ a.s.
  Therefore, $\mathfrak{S}_T=\infty$,
  and we can see that the set of such $\omega$ is null.
\end{proof}

\ifFULL\bluebegin
  If nothing else works,
  the applications can be deduced by attaching measure-theoretic Brownian motion
  after $\omega|_{[0,T]}$ for some $T\in[0,\infty)$.
\blueend\fi

\section{Conclusion}
\label{sec:conclusion}

Some directions of further research:
\begin{itemize}
\item
  Find the class of subsets of $\Omega$ that are Carath\'eodory measurable
  w.r.\ to $\UpProb$.
  (Does it coincide with $\KKK$?)
\item
  Find the class of subsets $E$ of $\Omega$ such that
  $\UpProb(E)=\LowProb(E)$.
  (Does it coincide with $\KKK$?)
\item
  Is it true that,
  with respect to the probability measure $\UpProb$ on $\KKK$
  and with respect to the filtration $\KKK_t:=\KKK\cap\FFF_t$,
  $\omega$ is a local martingale?
  (This is Tamas Szabados's question.)
\item
  To what extent can Theorems~\ref{thm:main} and~\ref{thm:main-constructive}
  be generalized to planar Brownian motion?
\item
  Is it true that, for all $E\in\FFF$,
  \begin{equation*}
    \UpProb(E)
    =
    \sup_P
    P(E),
  \end{equation*}
  $P$ ranging over the pushforwards of the Wiener measure
  by mappings $\omega\in\Omega\mapsto c+\omega\circ f$,
  where $c\in\bbbr$ and $f$ is a time transformation?
\end{itemize}
\blueend\fi

\section{Other connections with literature}
\label{sec:literature}

This section discusses several areas of stochastics
(in Subsection \ref{subsec:stochastic-integral})
and mathematical finance
(in Subsections \ref{subsec:FTAP} and \ref{subsec:model-free})
which are especially closely connected
with this paper's approach.

\subsection{Stochastic integration}
\label{subsec:stochastic-integral}

The natural financial interpretation of the stochastic integral is that
$\int_0^t \pi_s \dd X_s$
is the trader's profit at time $t$
from holding $\pi_s$ units of a financial security
with price path $X$ at time $s$
(see, e.g., \cite{Shiryaev:1999}, Remark III.5a.2).
It is widely believed that $\int_0^t \pi_s \dd X_s$
cannot in general be defined pathwise;
since our picture does not involve a probability measure on $\Omega$,
we restricted ourselves to countable combinations
(see (\ref{eq:positive-capital})) of integrals
of simple integrands (see (\ref{eq:simple-capital})).
This definition served our purposes well,
but in this subsection we will discuss other possible definitions,
always assuming that $X_s$ is a continuous function of $s$.

The pathwise definition of $\int_0^t \pi_s \dd X_s$
is straightforward when the total variation
(i.e., strong 1-variation in the terminology of Subsection~\ref{subsec:vi})
of $X_s$ over $[0,t]$ is finite;
it can be defined as, e.g., the Lebesgue--Stiltjes integral.
It has been known for a long time that the Riemann--Stiltjes definition
also works in the case
$1/\vi(\pi)+1/\vi(X)>1$
(Youngs' theory;
see, e.g., \cite{Dudley/Norvaisa:2011}, Section~2.2).
Unfortunately, in the most interesting case $\vi(\pi)=\vi(X)=2$
this condition is not satisfied.

Another pathwise definition of stochastic integral
is due to F\"ollmer \cite{Follmer:1981}.
F\"ollmer considers a sequence of partitions of the interval $[0,\infty)$
and assumes that the quadratic variation of $X$ exists,
in a suitable sense, along this sequence.
Our definition of quadratic variation
given in Section~\ref{sec:result-constructive}
resembles F\"ollmer's definition;
in particular,
our Theorem~\ref{thm:main-constructive}(a) implies
that F\"ollmer's quadratic variation exists for typical $\omega$
along the sequence of partitions $T^n$
(as defined at the beginning of Section~\ref{sec:result-constructive}).
In the statement of his theorem (\cite{Follmer:1981}, p.~144),
F\"ollmer defines the pathwise integral $\int_0^t f(X_s)\dd X_s$
for a $C^1$ function $f$ assuming that the quadratic variation of $X$ exists
and proves It\^o's formula for his integral.
In particular,
F\"ollmer's pathwise integral $\int_0^t f(\omega(s))\dd\omega(s)$
along $T^n$ exists for typical $\omega$
and satisfies It\^o's formula.
There are two obstacles to using F\"ollmer's definition in this paper:
in order to prove the existence of the quadratic variation
we already need our simple notion of integration
(which defines the notion of ``typical'' in Theorem~\ref{thm:main-constructive}(a));
the class of integrals $\int_0^t f(\omega(s))\,\dd\omega(s)$
with $f\in C^1$ is too restrictive for our purposes,
and using it would complicate the proofs.

An interesting development of Youngs' theory
is Lyons's \cite{Lyons:1998} theory of rough paths.
In Lyons's theory,
we can deal directly only with the rough paths $X$ satisfying $\vi(X)<2$
(by means of Youngs' theory).
In order to treat rough paths satisfying $\vi(X)\in[n,n+1)$,
where $n=2,3,\ldots$,
we need to postulate the values of the iterated integrals
$X^i_{s,t}:=\int_{s<u_1<\cdots<u_i<t}\dd X_{u_1}\cdots\dd X_{u_i}$
for $i=2,\ldots,n$
(satisfying so-called Chen's consistency condition).
According to Corollary~\ref{cor:vi},
only the case $n=2$ is relevant for our idealized market,
and in this case Lyons's theory is much simpler than in general
(but to establish Corollary~\ref{cor:vi} we already used our simple integral).
Even in the case $n=2$ there are different natural choices
of $X^2_{s,t}$
(e.g., those leading to It\^o-type and to Stratonovich-type integrals);
and in the case $n>2$ the choice would inevitably become even more \emph{ad hoc}.

Another obstacle to using Lyons's theory in this paper
is that the smoothness restrictions that it imposes are too strong
for our purposes.
In principle,
we could use the integral $\int_0^t G\dd\omega$
to define the capital brought by a strategy $G$ for trading in $\omega$
by time $t$.
However, similarly to F\"ollmer's, Lyons's theory requires that $G$
should take a position of the form $f(\omega(t))$ at time $t$,
where $f$ is a differentiable function
whose derivative $f'$ is a Lipschitz function
(\cite{Davie:2007}, Theorems 3.2 and 3.6).
This restriction would again complicate the proofs.

\subsection{Fundamental Theorems of Asset Pricing}
\label{subsec:FTAP}

The First and Second Fundamental Theorems of Asset Pricing (FTAPs, for brevity)
are families of mathematical statements;
e.g., we have different statements for one-period, multi-period,
discrete-time, and continuous-time markets.
A very special case of the Second FTAP,
the one covering binomial models,
was already discussed briefly in Section~\ref{sec:introduction}.
In the informal comparisons of our results and the FTAPs in this subsection
we only consider the case of one security
whose price path $X_t$ is assumed to be continuous.
(In the background,
there is also an implicit security, such as cash or bond, serving as our num\'eraire.)

The First FTAP says that a stochastic model for the security price path $X_t$
admits no arbitrage (or satisfies a suitable modification of this condition,
such as no free lunch with vanishing risk)
if and only if there is an equivalent martingale measure
(or a suitable modification thereof, such as an equivalent sigma-martingale measure).
The Second FTAP says that the market is complete
if and only if there is only one equivalent martingale measure
(as, e.g., in the case of the classical Black--Scholes model).
The completeness of the market means that each contingent claim has a unique fair price
defined in terms of hedging.
\ifFULL\bluebegin
  For each event $E$ the fair price of its indicator $\III_E$
  is called the risk-neutral probability of $E$.
  The risk-neutral probabilities constitute a probability measure,
  and the fair price of each contingent claim can be obtained by integrating it
  over the risk-neutral probability measure.
\blueend\fi

Theorems~\ref{thm:main} and~\ref{thm:supermain} are connected
(admittedly, somewhat loosely)
with the Second FTAP,
namely its part saying that each contingent claim has a unique fair price
provided there is a unique equivalent martingale measure.
For example, Theorem~\ref{thm:main} and Corollary~\ref{cor:main}
essentially say that each contingent claim of the form $\III_E$,
where $E\in\KKK$ and $\omega(0)=c$ for all $\omega\in E$,
has a fair price and its fair price is equal
to the Wiener measure $\Wiener_c(E)$ of $E$.
The scarcity of contingent claims that we can show to have a fair price is not surprising:
it is intuitively clear that our market is heavily incomplete.
According to Remark~\ref{rem:main},
we can replace the Wiener measure by many other measures.
The proofs of both the Second FTAP and our Theorems~\ref{thm:main} and~\ref{thm:supermain}
construct fair prices of contingent claims
using hedging arguments.
Extending this paper's results to a wider class of contingent claims
is an interesting direction of further research.

Theorems~\ref{thm:main} and~\ref{thm:supermain} are much more closely connected
with a generalized version of the Second FTAP
(see \cite{Follmer/Schied:2011}, Theorem~5.32, for a discrete-time version)
which says, in the first approximation,
that the range of arbitrage-free prices of a contingent claim
coincides with the range of the expectations of its payoff function
w.r.\ to the equivalent martingale measures.
\ifFULL\bluebegin
  I say ``in the first approximation''
  since there is an uncertainty at the end-points of the two ranges.
\blueend\fi
We can even say (completely disregarding mathematical rigour for a moment)
that Theorem~\ref{thm:supermain} is a special case of the generalized Second FTAP:
by the Dubins--Schwarz result,
$\omega$ is a time-changed Brownian motion
under the martingale measures,
and so the $\KKK$-measurability of $F$
implies that the unique fair price of the contingent claim
with the payoff function $F$
is $\int F d\Wiener_{\omega(0)}$.

The conditions of the First, Second, and generalized Second FTAP include
a given probability measure on the sample space
(our stochastic model of the market).
In the case of continuous time,
it is this postulated probability measure
that allows one to use It\^o's notion of stochastic integral
for defining basic financial notions
such as the resulting capital of a trading strategy.
No such condition is needed in the case of our results.



The notion of arbitrage is pivotal in mathematical finance;
in particular,
it enters both the First FTAP
and the generalized Second FTAP.
This paper's results and discussions were not couched in terms of arbitrage,
although there were two places where arbitrage-type notions
did enter the picture.

First, we used the notion of coherence in Section~\ref{sec:coherence}.
The most standard notion of arbitrage is that no trading strategy
can start from zero capital and end up with positive capital
that is strictly positive with a strictly positive probability.
Our condition of coherence is similar but much weaker;
and of course, it does not involve probabilities.
We show that this condition is satisfied automatically in our framework.

The second place where we need arbitrage-type notions is in the interpretation of results
such as Corollaries~\ref{cor:increase} and \ref{cor:vi}--\ref{cor:Taylor}.
For example, Corollary~\ref{cor:vi} implies that
$\vi^{[0,1]}(\omega)\in\{0,2\}$ for typical $\omega$.
Remembering our definitions,
this means that either $\vi^{[0,1]}(\omega)\in\{0,2\}$
or a predefined trading strategy makes infinite capital (at time 1)
starting from one monetary unit
and never risking going into debt.
If we do not believe that making infinite capital risking only one monetary unit
is possible for a predefined trading strategy
(i.e., that the market is ``efficient'', in a very weak sense),
we should expect $\vi^{[0,1]}(\omega)\in\{0,2\}$.
This looks like an arbitrage-type argument,
but there are two important differences:
\begin{itemize}
\item
  Our condition of market efficiency is only needed for the interpretation of our results;
  their mathematical statements do not depend on it.
  The standard no-arbitrage conditions are used directly in mathematical theorems
  (such as the First FTAP and the generalized Second FTAP).
\item
  The usual no-arbitrage conditions
  are conditions on the currently observed prices
  or our stochastic model of the market (or both).
  On the contrary, our condition of market efficiency
  describes what we expect to happen, or not to happen, on the actual price path.
\end{itemize}

It should be noted that our condition of market efficiency
(a predefined trading strategy is not expected
to make infinite capital risking only one monetary unit)
is much closer to Delbaen and Schachermayer's \cite{Delbaen/Schachermayer:1994} version
of the no-arbitrage condition,
which is known as NFLVR (no free lunch with vanishing risk),
than to the classical no-arbitrage condition.
The classical no-arbitrage condition only considers trading strategies
that start from 0 and never go into debt,
whereas the NFLVR condition allows trading strategies that start from 0
and are permitted to go into slight debt.
Our condition of market efficiency allows risking one monetary unit,
but this can be rescaled so that the trading strategies considered
start from zero and are only allowed to go into debt limited
by an arbitrarily small $\epsilon>0$.

%
%

\begin{remark}
  Mathematical statements of the First FTAP sometimes involve the condition
  that $X_t$ should be a semimartingale:
  see, e.g., Delbaen and Schachermayer's version
  in \cite{Delbaen/Schachermayer:1994}, Theorem~1.1.
  However, this condition is not a big restriction:
  in the same paper, Delbaen and Schachermayer show that the NFLVR condition
  already implies that $X_t$ is a semimartingale
  (under some additional conditions, such as $X_t$ being locally bounded;
  see \cite{Delbaen/Schachermayer:1994}, Theorem~7.2).
  A direct proof of the last result,
  using financial arguments and not depending on the Bichteler--Dellacherie theorem,
  is given in the recent paper \cite{Beiglbock/etal:2011}.
\end{remark}

We could have used the notion of arbitrage
to restate part of Theorem~\ref{thm:supermain}:
if the contingent claim
with a bounded and $\KKK$-measurable payoff function $F:\Omega\to[0,\infty)$
is worth strictly more than $\int F d\Wiener_{\omega(0)}$ at time $0$,
we can turn capital 0 at time 0 into capital 1 at time $\infty$.
Indeed, we can short such a contingent claim and divide the proceeds
$\int F d\Wiener_{\omega(0)}+\epsilon$, where $\epsilon>0$,
into two parts:
investing $\int F d\Wiener_{\omega(0)}+\epsilon/2$
into a trading strategy bringing capital $F(\omega)$ at time $\infty$
allows us to meet our obligation;
we keep the remaining $\epsilon/2$
(and we can scale up our portfolio to replace $\epsilon/2$ by 1).
We did not introduce the corresponding notion of arbitrage formally
since this restatement does not seem to add much to the theorem.

\ifFULL\bluebegin
  A probability-free result related to the inequality $\vi(\omega)\ge2$
  (for almost all non-constant $\omega$)
  was established by Salopek \cite{Salopek:1998} (p.~228),
  who proved that the trader can start from $0$ and end up with a strictly positive capital
  in a market with two securities whose price paths $\omega_1$ and $\omega_2$
  are strictly positive and satisfy $\vi(\omega_1)<2$, $\vi(\omega_2)<2$,
  $\omega_1(0)=\omega_2(0)=1$ and $\omega_1(T)\ne\omega_2(T)$.
  However, Salopek's definition of the capital process only works
  under the assumption that all securities in the market
  have price paths $\omega$ satisfying $\vi(\omega)<2$.
\blueend\fi

\subsection{Model uncertainty and robust results}
\label{subsec:model-free}

In this subsection we will discuss some known approaches
to mathematical finance that do not assume from the outset
a given probability model.

One natural relaxation of the standard framework
replaces the probability model with a family, more or less extensive,
of probability models
(there is a ``model uncertainty'').
Results proved under model uncertainty may be called robust.
We get some robustness for free already in the standard Black--Scholes framework:
option prices do not depend on the drift parameter $\mu$
in the probability model $\dd X_t/X_t=\mu\dd t + \sigma\dd W_t$,
$W_t$ being Brownian motion.
``Volatility uncertainty'', i.e.,
uncertainty about the value of $\sigma$, is much more serious.
A natural assumption,
sometimes called the ``uncertain volatility model'',
is that $\sigma$ can change dynamically
between known limits $\underline{\sigma}$ and $\overline{\sigma}$,
$\underline{\sigma}<\overline{\sigma}$.
Study of volatility uncertainty under this assumption
was originated by Avellaneda et al.\ \cite{Avellaneda/etal:1995}
and Lyons \cite{Lyons:1995}
and has been the object of intensive study recently;
whereas older paper concentrated on robust pricing of contingent claims
whose payoff depends on the underlying security's value
at one maturity date,
recent work treats the much more difficult case
of general path-dependent contingent claims.
This research has given rise to two important developments:
Denis and Martini's \cite{Denis/Martini:2006}
``almost pathwise'' theory of stochastic calculus
and Peng's \cite{Peng:2007,Peng:2010} $G$-stochastic calculus
(in our current context,
$G$ refers to the function
$G(y):=\sup_{\sigma\in[\underline{\sigma},\overline{\sigma}]}\sigma^2 y$).




Definitions similar to our (\ref{eq:upper-probability}) and (\ref{eq:upper-expectation})
are standard in the literature on model uncertainty:
see, e.g., Mykland \cite{Mykland:2000}, (3.3),
Denis and Martini \cite{Denis/Martini:2006}
(the definition of $\Lambda(f)$ on p.~834),
or Cassese \cite{Cassese:2008}, (4.4).
Different terms corresponding to our ``upper price'' have been used,
such as ``conservative ask price'' (Mykland)
and ``cheapest riskless superreplication price'' (Denis and Martini);
we will continue using ``upper price'' as a generic notion.
A major difficulty for such definitions
lies in defining the class of capital processes;
it is here that pre-specifying a family of probability models
proves to be particularly useful.


Finally,
we will discuss approaches that are
completely model-free.
Bick and Willinger \cite{Bick/Willinger:1994}
use F\"ollmer's construction of stochastic integral
discussed in Subsection \ref{subsec:stochastic-integral}
to define capital processes of trading strategies.
Even though their framework is not stochastic,
the conditions that they impose on the price paths
in order for dynamic hedging to be successful
are not so different from the standard conditions.
The assumption used in their Proposition~1 is,
in their notation,
$[Y,Y]_t=Y^0+\sigma^2t$,
where $S(t)=\exp(Y(t))$ is the price path
and $[Y,Y]_t$ is the pathwise quadratic variation of its logarithm;
this is similar to the Black--Scholes model.
They also consider (in Proposition~3) a more general case
$d[S,S]_t=\beta^2(S(t),t)$,
but $\beta$ has to be a continuous function that is known in advance.

Section~4 of Dawid et al.'s \cite{\Dawid} can be recast as a study of the upper price
of the American option paying $f(X_t^*)$,
where $f$ is a fixed positive and increasing function,
$t$ is the exercise time (chosen by the option's owner),
$X_t^*:=\max_{s\le t}X_s$
(time is discrete in \cite{\Dawid}),
and $X_s\ge0$ is the price of the underlying security at time $s$.
Corollary~2 in~\cite{\Dawid} implies
that the upper price of this option
is $X_0\int_{X_0}^{\infty}\frac{f(x)}{x^2}\dd x$.
This is compatible with Theorem~\ref{thm:supermain}
since $X_0/x^2$, $x\in[X_0,\infty)$,
is the density of the maximum of Brownian motion
started at $X_0$ and stopped when it hits 0
(cf.\ the first statement of Theorem 2.49
in \cite{Morters/Peres:2010}).

Let us assume, for simplicity, that $X_0=1$ (as in \cite{Hobson:1998}).
The simplest American option with payoff $f(X_t^*)$
is the one corresponding to the identity function $f(x)=x$;
it is a kind of a perpetual lookback option
(as discussed in, e.g., \cite{Duffie/Harrison:1993}, Section~5).
The upper price of this option is, of course, infinite:
$\int_1^{\infty}(1/x)\,\dd x=\infty$.
To get a finite price,
we can fix a finite maturity date $T$
and consider a European option with payoff $X_T^*:=\sup_{t\le T}X_t$
(we no longer assume that time is discrete).
To find a non-trivial upper price of this \emph{European lookback option},
Hobson \cite{Hobson:1998} considers trading strategies
that trade not only in the underlying security $X$
but also in call options on $X$ with maturity date $T$
and all possible strike prices
(making some regularity assumptions about the call prices);
he also finds the upper prices
for some modifications of European lookback options.
In order to avoid the use of the stochastic integral,
the dynamic part of the trading strategies that he considers is very simple;
there is only finite trading activity in each security.
Hobson's paper has been developed in various directions:
see, e.g., the recent review \cite{Hobson:2011} and references therein.
One important issue that arises
when we specify the prices of vanilla options at the outset
is whether these prices lead to arbitrage opportunities;
it has been investigated, for various notions of arbitrage,
in \cite{Davis/Hobson:2007} and \cite{Cox/Obloj:2011}.

An advantage of this paper's main results
is that the prices they provide are ``almost'' two-sided
(serve as both ask and bid prices): cf.\ Corollary \ref{cor:main}.
Their disadvantage
is that they allow us to price such a narrow class of contingent claims:
their payoff functions are required to be $\KKK$-measurable.
In principle, Hobson's idea of using vanilla options
for pricing exotic options
may lead to interesting developments of this paper's approach.
One could consider a whole spectrum of trading frameworks,
even in the case of one underlying security $X$.
One extreme is the framework of this paper
and, in the case of a discontinuous price path, \cite{\CTV}.
The security is not supported by any derivatives,
which leads to the paucity of contingent claims that can be priced.
The other extreme is where, alongside $X$,
we are allowed to trade in all European contingent claims
for all maturity dates.
Perhaps the most interesting research questions arise
in between the two extremes,
where only some European contingent claims
are available for use in hedging.

\section*{Appendix: Hoeffding's process}
\renewcommand{\thesection}{A}
\refstepcounter{section}
\addcontentsline{toc}{section}{Appendix: Hoeffding's process}

In this appendix we will check
that Hoeffding's original proof of his inequality
(\cite{Hoeffding:1963}, Theorem~2)
remains valid in the game-theoretic framework.
This observation is fairly obvious,
but all details will be spelled out
for convenience of reference.
This appendix is concerned with the case of discrete time,
and it will be convenient to redefine some notions
(such as ``process'').


Perhaps the most useful product of Hoeffding's method
is a positive supermartingale starting from 1
and attaining large values
when the sum of bounded martingale differences is large.
Hoeffding's inequality can be obtained
by applying the maximal inequality to this supermartingale.
However, we do not need Hoeffding's inequality in this paper,
and instead of Hoeffding's positive supermartingale
we will have a positive ``supercapital process'',
to be defined below.

This is a version of the basic forecasting protocol from \cite{Shafer/Vovk:2001}:

\bigskip

\noindent
\textsc{Game of forecasting bounded variables}

\smallskip

\noindent
\textbf{Players:} Sceptic, Forecaster, Reality

\smallskip

\noindent
\textbf{Protocol:}

\parshape=6
\IndentI   \WidthI
\IndentI   \WidthI
\IndentII  \WidthII
\IndentII  \WidthII
\IndentII  \WidthII
\IndentII  \WidthII
\noindent
Sceptic announces $\K_0\in\bbbr$.\\
FOR $n=1,2,\dots$:\\
  Forecaster announces interval $[a_n,b_n]\subseteq\bbbr$
    and number $\mu_n\in(a_n,b_n)$.\\
  Sceptic announces $M_n\in\bbbr$.\\
  Reality announces $x_n\in[a_n,b_n]$.\\
  Sceptic announces $\K_n \le \K_{n-1} + M_n (x_n - \mu_n)$.

\bigskip

\noindent
On each round $n$ of the game
Forecaster outputs an interval $[a_n,b_n]$ which, in his opinion,
will cover the actual observation $x_n$ to be chosen by Reality,
and also outputs his expectation $\mu_n$ for $x_n$.
The forecasts are being tested by Sceptic,
who is allowed to gamble against them.
The expectation $\mu_n$ is interpreted as the price of a ticket
which pays $x_n$ after Reality's move becomes known;
Sceptic is allowed to buy any number $M_n$,
positive or negative (perhaps zero),
of such tickets.
When $x_n$ falls outside $[a_n,b_n]$,
Sceptic becomes infinitely rich;
without loss of generality
we include the requirement $x_n\in[a_n,b_n]$ in the protocol;
furthermore, we will always assume that $\mu_n\in(a_n,b_n)$.
Sceptic is allowed to choose his initial capital $\K_0$
and is allowed to throw away part of his money at the end of each round.

It is important that the game of forecasting bounded variables
is a perfect-information game:
each player can see the other players' moves
before making his or her (Forecaster and Sceptic are male and Reality is female) own move;
there is no randomness in the protocol.

A \emph{process} is a real-valued function
defined on all finite sequences
\begin{equation*}
  (a_1,b_1,\mu_1,x_1,\ldots,a_N,b_N,\mu_N,x_N),
  \quad
  N=0,1,\ldots,
\end{equation*}
of Forecaster's and Reality's moves in the game of forecasting bounded variables.
If we fix a strategy for Sceptic,
Sceptic's capital $\K_N$, $N=0,1,\ldots$,
become a function of Forecaster's and Reality's previous moves;
in other words,
Sceptic's capital becomes a process.
The processes that can be obtained this way
are called \emph{supercapital processes}.

The following theorem is essentially inequality (4.16) in \cite{Hoeffding:1963}.
\begin{theorem}\label{thm:super}
  For any $h\in\bbbr$,
  the process
  \begin{equation*}
    \prod_{n=1}^N
    \exp
    \left(
      h(x_n-\mu_n)
      -
      \frac{h^2}{8} (b_n-a_n)^2
    \right)
  \end{equation*}
  is a supercapital process.
\end{theorem}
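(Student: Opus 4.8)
The plan is to reproduce Hoeffding's original single-round estimate inside the betting protocol. It suffices to show that, on each round $n$, if Sceptic enters the round with capital $\K_{n-1}>0$, he can choose $M_n$ so that
\[
  \K_{n-1} + M_n(x_n-\mu_n)
  \;\ge\;
  \K_{n-1}\exp\!\Bigl(h(x_n-\mu_n)-\tfrac{h^2}{8}(b_n-a_n)^2\Bigr)
\]
for every $x_n\in[a_n,b_n]$; Sceptic then declares $\K_n$ equal to the right-hand side, discarding the surplus. Starting from $\K_0:=1$ and iterating, the resulting supercapital process is
\begin{equation}\label{eq:positive}
  \K_N
  =
  \prod_{n=1}^N
  \exp\!\Bigl(h(x_n-\mu_n)-\tfrac{h^2}{8}(b_n-a_n)^2\Bigr)
  >
  0,
\end{equation}
which is the claimed process, and it is manifestly positive. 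Perfect information causes no difficulty here: the move $M_n$ that I will prescribe depends only on $\K_{n-1}$ and on $a_n,b_n,\mu_n$, all announced before Reality's move $x_n$.

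For the single-round inequality I would use two elementary ingredients. The first is convexity of $x\mapsto e^{h(x-\mu_n)}$ on $[a_n,b_n]$, which yields the affine majorant
\[
  e^{h(x-\mu_n)}
  \;\le\;
  \frac{b_n-x}{b_n-a_n}\,e^{h(a_n-\mu_n)}
  +
  \frac{x-a_n}{b_n-a_n}\,e^{h(b_n-\mu_n)}
  \;=:\;
  L_n(x),
  \qquad x\in[a_n,b_n].
\]
Multiplying through by the positive constant $\K_{n-1}e^{-h^2(b_n-a_n)^2/8}$ produces an affine function $g(x):=\K_{n-1}e^{-h^2(b_n-a_n)^2/8}L_n(x)$ that dominates $\K_{n-1}\exp(h(x-\mu_n)-h^2(b_n-a_n)^2/8)$ on $[a_n,b_n]$. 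Taking $M_n:=g'(\mu_n)$ and using that $g$ is affine, Sceptic's post-round capital satisfies $\K_{n-1}+M_n(x_n-\mu_n)=\K_{n-1}-g(\mu_n)+g(x_n)\ge g(x_n)$ as soon as $g(\mu_n)\le\K_{n-1}$, i.e. as soon as $L_n(\mu_n)\le e^{h^2(b_n-a_n)^2/8}$. This last scalar inequality is the second ingredient (Hoeffding's lemma proper).

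The only genuine computation, and hence the main obstacle, is that scalar inequality. Putting $p:=(\mu_n-a_n)/(b_n-a_n)\in(0,1)$ and $u:=h(b_n-a_n)$, one checks that $L_n(\mu_n)=e^{-pu}\bigl(1-p+pe^u\bigr)$, so it is enough to prove $\varphi(u):=-pu+\ln(1-p+pe^u)\le u^2/8$ for all real $u$. This follows from $\varphi(0)=0$, $\varphi'(0)=0$, and the uniform bound $\varphi''(u)=t(1-t)\le\tfrac14$, where $t:=pe^u/(1-p+pe^u)\in(0,1)$, combined with Taylor's formula with Lagrange remainder. I expect this step to be short but to need some care with the algebra; everything else is straightforward bookkeeping, and assembling the rounds by induction gives~(\ref{eq:positive}) and the theorem.
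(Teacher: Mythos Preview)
Your proof is correct and follows essentially the same route as the paper's: reduce to a single-round inequality, use convexity of the exponential to majorize $e^{h(x-\mu_n)}$ by the chord on $[a_n,b_n]$, and then prove Hoeffding's scalar lemma via the bound $\varphi''\le\tfrac14$ and Taylor's formula. The only cosmetic difference is that the paper first normalizes to $\mu_n=0$ and writes the bet explicitly, whereas you keep general $\mu_n$ and use the substitution $p=(\mu_n-a_n)/(b_n-a_n)$, $u=h(b_n-a_n)$; the resulting bet and the key second-derivative computation are identical.
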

\begin{proof}
  Assume, without loss of generality,
  that Forecaster is additionally required
  to always set $\mu_n:=0$.
  (Adding the same number to $a_n$, $b_n$, and $\mu_n$ on each round
  will not change anything for Sceptic.)
  Now we have $a_n<0<b_n$.

  It suffices to prove that on round $n$ Sceptic can turn a capital of $\K$
  into a capital of at least
  \begin{equation*}
    \K
    \exp
    \left(
      h x_n
      -
      \frac{h^2}{8} (b_n-a_n)^2
    \right);
  \end{equation*}
  in other words,
  that he can obtain a payoff of at least
  \begin{equation*}
    \exp
    \left(
      h x_n
      -
      \frac{h^2}{8} (b_n-a_n)^2
    \right)
    -
    1
  \end{equation*}
  using the available tickets
  (paying $x_n$ and costing $0$).
  This will follow from the inequality
  \begin{equation}\label{eq:positive}
    \exp
    \left(
      h x_n
      -
      \frac{h^2}{8} (b_n-a_n)^2
    \right)
    -
    1
    \le
    x_n
    \frac{e^{h b_n}-e^{h a_n}}{b_n-a_n}
    \exp
    \left(
      -
      \frac{h^2}{8} (b_n-a_n)^2
    \right),
  \end{equation}
  which can be rewritten as
  \begin{equation}\label{eq:goal}
    \exp
    \left(
      h x_n
    \right)
    \le
    \exp
    \left(
      \frac{h^2}{8} (b_n-a_n)^2
    \right)
    +
    x_n
    \frac{e^{h b_n}-e^{h a_n}}{b_n-a_n}.
  \end{equation}

  Our goal is to prove (\ref{eq:goal}).
  By the convexity of the function $\exp$,
  it suffices to prove
  \begin{equation}\label{eq:suffices}
    \frac{x_n-a_n}{b_n-a_n}
    e^{h b_n}
    +
    \frac{b_n-x_n}{b_n-a_n}
    e^{h a_n}
    \le
    \exp
    \left(
      \frac{h^2}{8} (b_n-a_n)^2
    \right)
    +
    x_n
    \frac{e^{h b_n}-e^{h a_n}}{b_n-a_n},
  \end{equation}
  i.e.,
  \begin{equation}\label{eq:goal-transformed}
    \frac
    {
      b_n e^{h a_n}
      -
      a_n e^{h b_n}
    }
    {b_n-a_n}
    \le
    \exp
    \left(
      \frac{h^2}{8} (b_n-a_n)^2
    \right),
  \end{equation}
  i.e.,
  \begin{equation}\label{eq:simpler}
    \ln
    \left(
      b_n e^{h a_n}
      -
      a_n e^{h b_n}
    \right)
    \le
    \frac{h^2}{8} (b_n-a_n)^2
    +
    \ln(b_n-a_n).
  \end{equation}
  (The logarithm on the left-hand side of (\ref{eq:simpler}) is well defined
  since the numerator of the left-hand side of (\ref{eq:goal-transformed})
  is strictly positive,
  which follows from the left-hand side of (\ref{eq:goal-transformed})
  being the value at $x_n=0$ of the left-hand side of (\ref{eq:suffices}),
  linear in $x_n$ and strictly positive for both $x_n=a_n$ and $x_n=b_n$.)
  The derivative of the left-hand side of (\ref{eq:simpler}) in $h$ is
  \begin{equation*}
    \frac
    {
      a_n b_n e^{h a_n}
      -
      a_n b_n e^{h b_n}
    }
    {
      b_n e^{h a_n}
      -
      a_n e^{h b_n}
    }
  \end{equation*}
  and the second derivative, after cancellations and regrouping, is
  \begin{equation*}
    (b_n-a_n)^2
    \frac
    {
      \left(
        b_n e^{h a_n}
      \right)
      \left(
        -a_n e^{h b_n}
      \right)
    }
    {
      \left(
        b_n e^{h a_n}
        -
        a_n e^{h b_n}
      \right)^2
    }.
  \end{equation*}
  The last ratio is of the form $u(1-u)$ where $0<u<1$.
  Hence it does not exceed $1/4$,
  and the second derivative itself does not exceed $(b_n-a_n)^2/4$.
  Inequality (\ref{eq:simpler}) now follows from the second-order Taylor expansion
  of the left-hand side around $h=0$.
\end{proof}

\ifFULL\bluebegin
\subsection*{Hoeffding's inequality}

We start from the definition of discrete-time finite-horizon upper price.
Suppose the game of forecasting bounded variables
lasts a known number $N$ of rounds.
(See \cite{Shafer/Vovk:2001} for the general definition.)
The \emph{sample space} is the set of all sequences
$(a_1,b_1,\mu_1,x_1,\ldots,a_N,b_N,\mu_N,x_N)$
of Forecaster's and Reality's moves in the game.
An \emph{event} is a subset of the sample space.
The \emph{upper price} of an event $E$
is the infimum of the initial value of positive supercapital processes
that take value at least $1$ on $E$.

Theorem \ref{thm:super} immediately gives Hoeffding's inequality
(cf.\ \cite{Hoeffding:1963}, the proof of Theorem 2)
when combined with the definition of game-theoretic probability:
\begin{corollary}\label{cor:Hoeffding}
  Suppose the game of forecasting bounded variables
  lasts a fixed number $N$ of rounds.
  If all $a_n$ and $b_n$ are given in advance
  and $t>0$ is a known constant,
  the upper price of the event
  \begin{equation}\label{eq:event-2}
    \frac1N
    \sum_{n=1}^N
    (x_n-\mu_n)
    \ge
    t
  \end{equation}
  does not exceed
    $e^{-2N^2t^2/C}$,
  where $C := \sum_{n=1}^N (b_n-a_n)^2$.
\end{corollary}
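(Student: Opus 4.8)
The plan is to derive this directly from Theorem~\ref{thm:super} together with the definition of (discrete-time, finite-horizon) outer content; this is the game-theoretic counterpart of the usual Chernoff-bound proof of Hoeffding's inequality.

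Concretely, I would fix a parameter $h>0$, take the supercapital process $\mathfrak{S}$ supplied by Theorem~\ref{thm:super} for this $h$, and note that $\mathfrak{S}_0=1$ and that $\mathfrak{S}$ is positive, being a product of exponentials (this positivity was already observed around~(\ref{eq:positive})), so that it is admissible in the definition of outer content. On the event~(\ref{eq:event-2}) one has $\sum_{n=1}^N(x_n-\mu_n)\ge Nt$, hence, using $h>0$,
\begin{equation*}
  \mathfrak{S}_N
  =
  \exp
  \left(
    h\sum_{n=1}^N(x_n-\mu_n)
    -
    \frac{h^2}{8}C
  \right)
  \ge
  \exp
  \left(
    hNt
    -
    \frac{h^2}{8}C
  \right).
\end{equation*}
Maximizing the exponent $hNt-\frac{h^2}{8}C$ over $h>0$ yields $h=4Nt/C$ and maximal exponent value $2N^2t^2/C$, so on the event~(\ref{eq:event-2}) we get $\mathfrak{S}_N\ge e^{2N^2t^2/C}$. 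Then $e^{-2N^2t^2/C}\,\mathfrak{S}$ is a positive supercapital process with initial value $e^{-2N^2t^2/C}$ that attains at least~$1$ on~(\ref{eq:event-2}), and the definition of outer content gives the claimed bound.

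I do not expect a genuine obstacle. The two points that need care are both minor: $h$ must be chosen strictly positive so that the lower bound on $\sum_{n=1}^N(x_n-\mu_n)$ translates into a lower bound on $\mathfrak{S}_N$; and, because the optimal $h=4Nt/C$ and the rescaling constant $e^{-2N^2t^2/C}$ depend on $C=\sum_{n=1}^N(b_n-a_n)^2$, it is essential here (as the hypothesis of the corollary ensures) that the intervals $[a_n,b_n]$ are given in advance rather than chosen adaptively by Forecaster, so that these quantities constitute a legitimate non-adaptive choice by Sceptic.
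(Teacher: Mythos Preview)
Your proposal is correct and matches the paper's own proof essentially line for line: apply Theorem~\ref{thm:super}, bound $\mathfrak{S}_N$ on the event~(\ref{eq:event-2}) by $\exp(hNt-h^2C/8)$, and optimize at $h=4Nt/C$. Your additional remarks on why $h>0$ is needed and why $C$ must be known in advance are accurate and make explicit what the paper leaves implicit.
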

(The reader will see that it is sufficient for Sceptic to know only $C$
at the start of the game,
not the individual $a_n$ and $b_n$.)
\begin{proof}
  The supercapital process of Theorem \ref{thm:super} starts from $1$
  and achieves
  \begin{equation}\label{eq:achieves}
    \prod_{n=1}^N
    \exp
    \left(
      h(x_n-\mu_n)
      -
      \frac{h^2}{8} (b_n-a_n)^2
    \right)
    \ge
    \exp
    \left(
      hNt - \frac{h^2}{8}C
    \right)
  \end{equation}
  on the event~(\ref{eq:event-2}).
  The right-hand side of (\ref{eq:achieves}) attains its maximum at $h:=4Nt/C$,
  which gives the statement of the corollary.
\end{proof}

\begin{remark}
  The measure-theoretic counterpart of Corollary \ref{cor:Hoeffding}
  is sometimes referred to as the Hoeffding--Azuma inequality,
  in honour of Kazuoki Azuma
  \ifnotLATIN(\begin{CJK*}[dnp]{JIS}{min}¸ãºÊ °ì¶½\end{CJK*}) \fi
  \cite{Azuma:1967}.
  The martingale version, however, is also stated in Hoeffding's paper
  (\cite{Hoeffding:1963}, the end of Section 2).
\end{remark}
\blueend\fi

\subsection*{Acknowledgments}

The final statement of Theorem~\ref{thm:main}
is due to Peter McCullagh's insight
and Tamas Szabados's penetrating questions.
The game-theoretic version of Hoeffding's inequality
is inspired by a question asked by Yoav Freund.
Rimas Norvai\v{s}a's useful comments and
explanations are gratefully appreciated.
This paper very much benefitted from the feedback from several anonymous reviewers,
the Associate Editor, and Professor Martin Schweizer.
Their contributions ranged from very specific,
such as noticing a mistake in the statement of Corollary \ref{cor:main}
in an early version,
to general comments that have led, e.g.,
to new applications of Theorem~\ref{thm:main},
to Theorems~\ref{thm:supermain} and \ref{thm:supermain-constructive},
to the inclusion of Section~\ref{sec:literature},
and to improved presentation.
At the latest stages of the work on the journal version of this paper
I benefitted from comments by Wouter Koolen, Roman Chychyla,
John Shawe-Taylor, Jan Ob\l\'oj, and Johannes Ruf.
After finishing the journal version I have had productive discussions
with Nicolas Perkowski, David Pr\"omel, Martin Huesmann,
Alexander M. G. Cox, Pietro Siorpaes, and Beatrice Acciaio.

\ifFULL\bluebegin
  In fact, it was an \emph{Annals of Probability} referee
  who pointed out a mistake in the statement of Corollary \ref{cor:main}.
\blueend\fi

This work was supported in part by EPSRC (grant EP/F002998/1).

\ifFULL\bluebegin
  Notational conventions:
  \begin{itemize}
  \item
    Euler script (EuScript) and blackboard letters stand for specific objects.
  \item
    Italic, Greek, and Euler Fraktur (mathfrak) letters stand for variable objects.
    Exceptions: $\Omega$, $\Phi$, $A$.
  \item
  Underused letters:
  $x$, $y$, $z$, $u$, $v$.
  \end{itemize}
\blueend\fi

\end{document}